\theoremstyle{definition}
\newtheorem{assumption}[theorem]{Assumption}
\newcommand{\N}{{\mathbb{N}}}
\newcommand{\R}{\mathbb{R}} 
\newcommand{\Rbar}{\overline\R{}} 
\renewcommand{\phi}{\varphi}
\newcommand{\eps}{\varepsilon}
\newcommand{\half}{\frac{1}{2}}
\newcommand{\calF}{\mathcal{F}}
\newcommand{\calG}{\mathcal{G}}
\newcommand{\omc}{{\overline\omega_c}}
\newcommand{\omo}{{\omega_o}}
\newcommand{\M}{\mathcal{M}}
\newcommand{\norm}[1]{\|#1\|}
\newcommand{\dual}[2]{\langle #1, #2\rangle}
\newcommand{\set}[1]{\left\{#1\right\}}
\DeclareMathOperator{\dom}{dom}
\DeclareMathOperator{\ran}{ran}
\newcommand{\pred}[1]{\prescript{*\!}{}{#1}}
\newcommand{\Aad}{\pred{\!A}}
\def\pdS{h} 
\def\piC{\phi} 
\pgfplotsset{compat=newest}
\pgfplotsset{plot coordinates/math parser=false}
\title{Optimal control of elliptic equations with positive measures}
\author{Christian Clason%
    \thanks{Faculty of Mathematics, University Duisburg-Essen, 45117 Essen, Germany (\email{christian.clason@uni-due.de})}
    \and Anton Schiela%
    \thanks{Institute of Mathematics, University of Bayreuth, 95440 Bayreuth, Germany (\email{anton.schiela@uni-bayreuth.de})}
}
\date{September 7, 2015}
\begin{document}
\maketitle

%%%%%%%%%%%%%%%%%%%%%%%%%%%%%%%%%%%%%%%%%%%%%%%%%%%%%%%%%%%%%%%%%%%%%%%%%

\begin{abstract}
    Optimal control problems without control costs in general do not possess solutions due to the lack of coercivity. However, unilateral constraints together with the assumption of existence of strictly positive solutions of a pre-adjoint state equation, are sufficient to obtain existence of optimal solutions in the space of Radon measures.  Optimality conditions for these generalized minimizers can be obtained using Fenchel duality, which requires a non-standard perturbation approach if the control-to-observation mapping is not continuous (e.g., for Neumann boundary control in three dimensions).
    Combining a conforming discretization of the measure space with a semismooth Newton method allows the numerical solution of the optimal control problem.
\end{abstract}

\section{Introduction}

This work is concerned with the following optimal control problem, stated formally as
\begin{equation}\label{eq:problem}
    \inf_{y,u} \half \norm{Ey-y_d}_{L^2(\omo)}^2   
    \quad\text{s.\,t.}\quad  Ay-Bu=0,\quad u \ge 0,
\end{equation}
where $A$ is a second-order elliptic differential operator and $y_d$ is a given target.
Furthermore, $\omo\subset\overline\Omega\subset\R^d$ is the observation domain with corresponding restriction operator $E$, and the control is defined on a control domain $\omc\subset\overline\Omega$ with corresponding extension operator $B$. (This setting includes boundary control and observation; for details we refer to \cref{sec:state_equation}.)

Problem~\eqref{eq:problem} differs from standard control-constrained optimal control problems by the fact that no control cost term, e.g., of the form $\alpha \|u\|_{U}^2$ or $\|u\|_U$ with $\alpha > 0$ and a suitable Banach space $U$, appears in the functional.
This term is usually necessary to guarantee existence of an optimal solution $(\bar y,\bar u)$, since it provides us with coercivity of the objective functional in the appropriate topology. 
Consequently, one of the major issues in this work will be the discussion of existence of minimizers of this problem.
As we will show, the non-negativity together with the tracking term is sufficient (under an appropriate assumption on the operator $A$) to obtain coercivity with respect to $u$, albeit only in the space of measures. Intuitively, boundedness of $y=A^{-1}Bu$ in $L^2$ implies boundedness of $Bu$ only in $H^{-2}$, which is all one can expect in general without control constraints. It is thus surprising that in many cases optimal controls exist in the more regular space $\M$ of Radon measures if merely unilateral constraints are present, thus allowing to formulate, analyze and numerically solve the limit problem as $\alpha\to 0$ in the above-mentioned standard problems with unilateral constraints, which is the main motivation of this work.

Once existence of optimal controls is established, first-order optimality conditions can be derived via Fenchel duality. This is relatively straightforward in those cases where 
the control-to-observation mapping $u\to Ey$ is continuous as a mapping $\M(\omc)\to L^2(\omo)$. However, due to the low regularity of the control, this assumption is not satisfied for all relevant applications (e.g., Neumann-control in three dimensions; similar difficulties are to expected for parabolic problems). These cases require special care since they involve unbounded operators. A second motivation of this work is therefore to extend the Fenchel duality theorem to this setting.

Let us remark on some related problems. Recently, a class of elliptic problems came into the focus of interest, where control costs of the form $\alpha \|u\|_{L^1}$ were used and which possess generalized solutions $u\in \mathcal M$; see \cite{Clason:2010a,Clason:2011a,Clason:2012,Casas:2013}. In particular, we rely on the first three works for the numerical computation of our optimal measure space controls using a semismooth Newton method and a conforming finite element discretization of $\M$.
Often such functionals are still augmented by an additional $L^2$-type control cost as well as bilateral control constraints, and the limit $\beta \to 0$ is considered; see, e.g., \cite{Stadler:2007a,Wachsmuth:2009}. 
A second related problem class is that of so-called \emph{bang-bang}-problems \cite{Hinze:2012}, where no control costs are present, but the control constraints are bilateral, so that optimal solutions exist in $L^\infty$. 
Finally, due to the presence of measure-valued controls, we will have to define the operator $A$ in a way that $Ay=\mu$ has a unique solution for each $\mu\in \mathcal M$. This requires an extension of the usual variational setting in $H^1$. In this respect, our paper draws from results in the literature; see \cite{Schiela:2010} and the references therein. It also provides a link to the study of state-constrained problems \cite{Casas85}, where measure-valued right-hand sides appear in first-order optimality conditions.

This work is organized as follows. \Cref{sec:state_equation} discusses well-posedness of the state equation for measure-valued right-hand sides. In \cref{sec:existence:minimizer}, we give a  rigorous statement of Problem \eqref{eq:problem} and show that under a strict positivity assumption on the adjoint control-to-observation mapping, a minimizer to \eqref{eq:problem} exists in the space of Radon measures; we discuss the validity of this assumption in the context of second-order elliptic equations in ~\cref{sec:existence:slater}. \Cref{sec:existence:examples} gives some examples as well as a counterexample that shows the necessity of our assumption. Optimality conditions for these minimizers are derived in \cref{sec:optimality} based on a Fenchel duality theorem for an unbounded operator. In \cref{sec:MY}, we remark on the relation of Problem \eqref{eq:problem} to the corresponding problems including additional $L^2$ or measure-space control costs.
The numerical solution based on a variational discretization and a semismooth Newton method is discussed in \cref{sec:discretization}.
Finally, numerical examples are presented in \cref{sec:examples}.

\section{State equation}\label{sec:state_equation}

We first discuss well-posedness of the control-to-observation mapping $u\mapsto Ey$. Since $u$ is only a Radon measure and $E$ need not be continuous, 
this requires some technicalities. In particular, due to the presence of the non-reflexive spaces $C$ and $\M$ it will be useful to start with defining the \emph{pre-adjoint} operators of $A$ and $B$.

\paragraph{Elliptic differential operator $A$}
Consider a bounded domain (i.e., an open connected subset) $\Omega \subset \mathbb{R}^d$ with Lipschitz boundary $\partial\Omega$, so that the trace operator 
$H^1(\Omega) \to L^2(\partial \Omega)$ is well-defined.
Let $a(\cdot,\cdot) : H^1(\Omega) \times H^1(\Omega) \to \mathbb{R}$ be a continuous and elliptic bilinear form, defined by 
\begin{equation}\label{eq:general_bilinear_form}
    a(y,p):= \int_{\Omega} \left[\sum_{i,j=1}^d a_{ij}(x) y_{x_i}
    p_{x_j} +  c(x) y p\right]\, dx+\int_{\partial \Omega} r(x)y p\, ds.
\end{equation}
where subsequently we assume that the coefficients are symmetric (i.e.,  $a_{ij}=a_{ji}$) and bounded on $\Omega$, and that $c$ and $r$ are non-negative bounded functions in $\Omega$ and $\partial \Omega$, respectively.
Furthermore, assume that there exists $a_0>0$ such that
\begin{equation}
    \sum_{i,j=1}^d a_{ij}(x) \xi_i \xi_j \geq a_0 | \xi |^2 \quad \text{ for all } \xi \in \mathbb{R}^d
    \text{ and almost all } x \in \Omega.
\end{equation}
We assume further that not both $c$ and $r$ are identically $0$. As usual, it follows by the Poincar\'e inequality that $a$ is coercive, i.e., there exists $c_1 >0$ such that
\begin{equation}  \label{eq:coercive}
    a(y,y) \geq c_1 \norm{y}_{H^{1}(\Omega)}^2 \qquad \text{ for all } y \in H^1(\Omega).
\end{equation}
Alternatively, we could impose Dirichlet boundary conditions on (part of) $\partial \Omega$ to obtain coercivity. However, in the following
discussion we stick to the case $H^1(\Omega)$, mainly for simplicity of presentation. 

It then follows from the Lax--Milgram theorem that for each $\ell \in H^1(\Omega)^*$, there is a unique $y \in H^1(\Omega)$, such that
$a(y,p)=\ell(p)$ for all $p \in H^1(\Omega)$. In this way, the well-known isomorphism $A_{H^1} : H^1(\Omega) \to H^1(\Omega)^*$ is
constructed via $(A_{H^1}y)(p) := a(y,p)$. 

\paragraph{Extension to measure-valued right-hand sides}
Our next aim is to define a version of this operator that covers elliptic PDEs with measure-valued right-hand sides. For $d\ge 2$, this does not fit
into the classical variational framework. Following the method of Stampacchia \cite{Stampacchia:1965a}, we will therefore first construct an unbounded pre-dual operator $\pred A$ with 
domain $C(\overline \Omega)$, and then consider its adjoint $A:= (\pred A)^*$ whose co-domain is then -- by definition -- the dual of $C(\overline \Omega)$,
which can be identified by the Riesz representation theorem with the space of Radon measures $\M(\overline \Omega)$. The following construction is similar to the one
given in \cite{Schiela:2010}; our main reference concerning unbounded operators is \cite{Goldberg:2006}.

Consider an index $q>d$ (the spatial dimension), so that $W^{1,q}(\Omega)\hookrightarrow C(\overline \Omega)$, and its dual index $q'$ which satisfies $q^{-1}+q'^{-1}=1$. 
By H\"older's inequality applied to the derivatives,
$a(\cdot,\cdot)$ is still well-defined and continuous as a bilinear form 
\begin{equation}\label{eq:bilinear_W1q}
    a(\cdot,\cdot) : W^{1,q'}(\Omega) \times W^{1,q}(\Omega) \to \mathbb{R}.
\end{equation}

Let us define a domain $\dom \Aad\subset H^1(\Omega)$ (often called ``maximal domain of definition'') and a bijective mapping $\Aad: \dom\Aad \to W^{1,q'}(\Omega)^*$ in the following
way:
\begin{equation}\label{eq:dompredA}
    \dom\Aad:=\set{p\in H^1(\Omega): \exists\, c_p \in \mathbb{R} \text{ with } a(y,p) \leq c_p \norm{y}_{W^{1,q'}(\Omega)} \; \forall \, y\in H^1(\Omega)}.
\end{equation}
Let us stress that here (and in similar occasions) the bound $c_p$ may depend on $p$ but not on $y$. 

By \eqref{eq:bilinear_W1q}, we conclude that $H^1(\Omega) \supset \dom\Aad \supset W^{1,q}(\Omega)$, and under relatively mild assumptions
on the smoothness of the coefficients and on the domain, regularity theory even yields $\dom\Aad = W^{1,q}(\Omega)\hookrightarrow C(\overline \Omega)$ if $q$ is sufficiently close
to $d$; see, e.g., \cite[Theorem 3.16]{Troianiello:1987a}. This is called the case of ``maximal regularity''. In fact, for $d=2$, it is always possible to find an appropriate $q$. In this case we can define $\pred A$ as follows:
\begin{equation}
    \begin{aligned}
        \Aad&: C(\overline \Omega) \supset W^{1,q}(\Omega) \to W^{1,q'}(\Omega)^*,\\
        p &\mapsto \Aad p \; : (\Aad p)(y) := a(y,p) \quad \text{for all } y\in W^{1,q'}(\Omega).
    \end{aligned}
\end{equation}
Otherwise, if $\dom\Aad$ is a proper superset of $W^{1,q}(\Omega)$, the bilinear form $a(y,p)$ is not defined anymore for all $y\in W^{1,q'}(\Omega)$ and $p\in \dom\Aad$
due to lack of integrability of the principal part. However, by the definition of $\dom \Aad$ in \eqref{eq:dompredA}, we can extend $a(\cdot,\cdot)$ to
a bilinear form $\overline{a}(\cdot,\cdot): W^{1,q'}(\Omega) \times \dom\Aad$ via the unique continuous extension
\begin{equation}
    \overline{a}(y,p):= \lim_{n \to \infty}a(y_n,p) \quad \text{for all } (y,p) \in  W^{1,q'}(\Omega) \times \dom\Aad,
\end{equation}
where $\{y_n\}_{n\in\N}$ is a sequence in $H^1(\Omega)$ such that $y_n \to y$ in $W^{1,q'}(\Omega)$. By density of $H^1(\Omega)$ in $W^{1,q'}(\Omega)$, such
a sequence always exists, and by definition of $\dom\pred{A}$ in \eqref{eq:dompredA}, the limit of $a(y_n,p)$ always exists and depends only on the limit $y$.

Under very mild assumptions,
it is still possible to show $\dom\Aad \subset C(\overline \Omega)$ (see, e.g., \cite[Theorem~3.3, Corollary~3.5, Corollary~3.6]{Rehberg:2009}), 
so that we obtain:
\begin{equation}
    \begin{aligned}
        \Aad &: C(\overline \Omega) \supset \dom\Aad \to W^{1,q'}(\Omega)^*,\\
        p &\mapsto \Aad p \; : (\Aad p)(y) := \overline{a}(y,p) \quad \text{for all } y\in W^{1,q'}(\Omega).
    \end{aligned}
\end{equation}
In both cases $\Aad$ is a bijective, closed, unbounded operator (cf. \cite{Schiela:2010}) and thus has
continuous inverse $\Aad^{-1}$ by the open mapping theorem for closed operators; see, e.g., \cite[II.1.8]{Goldberg:2006}. 
In what follows only this -- more general -- setting is required, keeping in mind, however, that $\Aad$ (and thus 
also its adjoint, defined next) corresponds to $\overline{a}(\cdot,\cdot)$, which only coincides with $a(\cdot,\cdot)$ if
$\dom\Aad = W^{1,q}(\Omega)$, cf. \cite{Schiela:2010}. 

Since $\dom\Aad\supset W^{1,q}(\Omega)$ is dense in $C(\overline \Omega)$,  the Banach space adjoint (also called conjugate) $A := (\Aad)^*$ of $\Aad$ is well-defined 
as a linear operator (cf., e.g., \cite[Def. II.2.2]{Goldberg:2006})
\begin{equation}
    A :  W^{1,q'}(\Omega) \supset \dom A \to \M(\overline \Omega),
\end{equation}
where $\dom A$ is canonically defined as
\begin{equation}
    \dom A := \set{ y\in W^{1,q'}(\Omega) :  \exists\, c_y\in \mathbb{R} \text{ with } (\pred Ap)(y) = \overline{a}(y,p) \le c_y\|p\|_{C(\overline \Omega)} \quad \forall\, p\in \dom \Aad}.
\end{equation}
Then for any $y\in \dom A$, the mapping $p\mapsto\overline{a}(y,p)$ defines a continuous linear functional on the dense subspace $\dom\Aad\subset C(\overline \Omega)$. It can thus be extended uniquely to a continuous functional $Ay$ on $C(\overline \Omega)$ satisfying $(Ay)(p) = \overline{a}(y,p)$ for all $p\in \dom \Aad$. 
By the Riesz representation theorem, $Ay$ can be identified with an element of $\M(\overline \Omega)$. We stress that this is the standard construction of
the Banach space adjoint of an unbounded, densely defined operator. 
By \cite[Theorem II.2.6, Theorem II.4.4]{Goldberg:2006}, the operator $A$ is also closed and continuously invertible, because $\pred A$ is. 

We even obtain the following compactness property:
\begin{lemma}[\protect{\cite[Lemma 2.15]{Schiela:2010}}]\label{lem:wsd}
    Consider a sequence $\{\mu_n\}_{n\in\N}$ that converges weakly-$*$ in $\M(\overline\Omega)$ to $\mu$. Then the sequence $\{A^{-1}\mu_n\}_{n\in\N}$ 
    converges strongly in $W^{1,q'}(\Omega)$ to $A^{-1}\mu$.
\end{lemma}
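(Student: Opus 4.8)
The plan is to derive strong convergence from \emph{compactness} of the solution operator $A^{-1}\colon\M(\overline\Omega)\to W^{1,q'}(\Omega)$ combined with a subsequence argument; the weak-$*$ convergence of the data then pins down the unique limit. The first and decisive step is to prove that $A^{-1}$ is compact. Since $A=(\Aad)^*$ and $W^{1,q'}(\Omega)$ is reflexive (as $1<q'<\infty$), the Banach space adjoint of the bounded operator $\Aad^{-1}\colon W^{1,q'}(\Omega)^*\to C(\overline\Omega)$ acts from $C(\overline\Omega)^*=\M(\overline\Omega)$ into $(W^{1,q'}(\Omega)^*)^*=W^{1,q'}(\Omega)$, and a direct computation from the defining relation $\dual{Ay}{p}=\overline a(y,p)=\dual{\Aad p}{y}$ identifies $(\Aad^{-1})^*=A^{-1}$. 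By Schauder's theorem, $A^{-1}$ is then compact provided $\Aad^{-1}$ is. In the case of maximal regularity the latter is immediate: $\Aad^{-1}$ is bounded from $W^{1,q'}(\Omega)^*$ into $\dom\Aad=W^{1,q}(\Omega)$, which embeds compactly into $C(\overline\Omega)$ (Morrey's embedding into a H\"older space for $q>d$, followed by Arzel\`a--Ascoli).

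Given compactness, I would record the two standard consequences of weak-$*$ convergence. Because $\mu_n\stackrel{*}{\rightharpoonup}\mu$, the uniform boundedness principle gives $\sup_n\norm{\mu_n}_{\M(\overline\Omega)}<\infty$, so $\{\mu_n\}_{n\in\N}$ is a bounded set. Writing $y_n:=A^{-1}\mu_n$, compactness of $A^{-1}$ then makes $\{y_n\}_{n\in\N}$ relatively compact in the norm topology of $W^{1,q'}(\Omega)$.

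The core of the argument is to show that every norm-convergent subsequence has the same limit. Let $y_{n_k}\to z$ in $W^{1,q'}(\Omega)$ along some subsequence. For each fixed $p\in\dom\Aad$ I would pass to the limit in
\[
    \dual{\mu_{n_k}}{p}=\dual{A y_{n_k}}{p}=\overline a(y_{n_k},p)=\dual{\Aad p}{y_{n_k}}.
\]
The left-hand side tends to $\dual{\mu}{p}$ by weak-$*$ convergence (note $p\in C(\overline\Omega)$), while the right-hand side tends to $\dual{\Aad p}{z}=\overline a(z,p)$ since $\Aad p\in W^{1,q'}(\Omega)^*$ is fixed and $y_{n_k}\to z$. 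Hence $\overline a(z,p)=\dual{\mu}{p}$ for all $p\in\dom\Aad$; the resulting bound $\overline a(z,p)\le\norm{\mu}_{\M(\overline\Omega)}\norm{p}_{C(\overline\Omega)}$ exhibits $z\in\dom A$ with $Az=\mu$, so $z=A^{-1}\mu$ by bijectivity of $A$.

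It remains to assemble these pieces by the usual subsequence principle: a relatively compact sequence all of whose convergent subsequences share the limit $A^{-1}\mu$ must itself converge to $A^{-1}\mu$ in norm, which is the assertion. I expect the only real obstacle to be the compactness of $\Aad^{-1}$: it is elementary through the compact Sobolev embedding in the maximal-regularity setting, but when $\dom\Aad$ strictly contains $W^{1,q}(\Omega)$ it rests on the nontrivial regularity theory ensuring $\dom\Aad\subset C(\overline\Omega)$ that is invoked in the references above; everything after the compactness step is soft functional analysis.
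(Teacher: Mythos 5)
The paper does not prove this lemma at all -- it is imported verbatim from \cite[Lemma~2.15]{Schiela:2010} -- so your argument should be judged as a free-standing reconstruction, and as such it is essentially correct. Your reduction is the right one: identify $A^{-1}=(\Aad^{-1})^*$ (your computation via $\dual{Ay}{p}=\overline a(y,p)=\dual{\Aad p}{y}$ is fine), invoke Schauder to transfer compactness, note that weak-$*$ convergent sequences are bounded by Banach--Steinhaus, and then use the ``relatively compact sequence with a unique subsequential limit'' principle, where the limit is pinned down by testing against $p\in\dom\Aad$ and using density of $\dom\Aad$ in $C(\overline\Omega)$ to conclude $Az=\mu$. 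This correctly avoids the trap of claiming that a compact operator maps weak-$*$ convergent sequences to norm-convergent ones (false on a non-reflexive predual without the subsequence detour). The one load-bearing input you do not fully establish is the compactness of $\Aad^{-1}:W^{1,q'}(\Omega)^*\to C(\overline\Omega)$ outside the maximal-regularity case. Be aware that the inclusion $\dom\Aad\subset C(\overline\Omega)$ together with closedness -- which is all the paper's construction explicitly records -- only yields \emph{continuity} of $\Aad^{-1}$ into $C(\overline\Omega)$, not compactness; what one actually needs from the regularity theory in \cite{Rehberg:2009} (De Giorgi--Nash--Moser type results) is a quantitative H\"older estimate $\norm{\Aad^{-1}\ell}_{C^{0,\alpha}(\overline\Omega)}\le C\norm{\ell}_{W^{1,q'}(\Omega)^*}$, after which Arzel\`a--Ascoli gives compactness. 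You flag this honestly, and it is no worse a deferral than the paper's own citation, but the distinction between ``maps into $C(\overline\Omega)$'' and ``maps compactly into $C(\overline\Omega)$'' should be made explicit, since the entire strong (rather than merely weak) convergence claim of the lemma rests on it.
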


\paragraph{Control operator $B$}
Next, consider a compact set $\omc\subset\overline\Omega$ such that there exists a 
continuous trace or embedding operator $\pred{B}_{H^1} : H^1(\Omega) \to L^2(\omc)$. Here $L^2(\omc)$ is defined with respect to an appropriate positive and bounded
measure $\nu$ on $\omc$; e.g., $\omc=\overline \Omega$ with the Lebesgue measure for distributed control, and $\omc=\partial \Omega$ with the boundary measure
for boundary control. Technically, we will require in the following that $\nu(\omc \cap O) > 0$ for any open subset $O\subset \R^d$ such that $\omc \cap O$ is non-empty. 
This guarantees applicability of \cref{thm:wsd} (see \cref{sec:appendix}).

We introduce the linear and continuous restriction operator
\begin{equation}
    \pred{B}:C(\overline\Omega)\to C(\omc),\qquad
    (\pred{B}v)(x) = v(x) \quad \forall x \in \omc,
\end{equation}
which coincides with the above mentioned restriction operator $\pred{B}_{H^1}$ on
$C(\overline \Omega)\cap H^1(\Omega)$, this space being dense in both $C(\overline \Omega)$ and $H^1(\Omega)$. 

Its adjoint $B := (\pred{B})^*$ can be interpreted (via the Riesz representation theorem) as a mapping
\begin{equation}
    B : \M(\omc) \to \M(\overline \Omega)
\end{equation}
acting as the extension by $0$ of a measure on $\omc$ to a measure on $\overline \Omega$. On $L^2(\omc)$ it coincides with 
the operator $B_{H^1}:=(\pred{B}_{H^1})^* : L^2(\omc) \to H^1(\Omega)^*$. Moreover, by \cref{thm:wsd} the space $L^2(\omc)$ is weakly-$*$ sequentially dense in $\M(\omc)$.

\paragraph{Observation operator $E$}
For the operator $E$, which will be defined on reflexive spaces, it is most convenient to start with the primal operator.
Let $\omo\subset\overline\Omega$, equipped with a suitable measure, and assume that there exists a closed (possibly unbounded) operator
\begin{equation}
    E : W^{1,q'}(\Omega) \supset \dom E \to L^2(\omo),
\end{equation}
where $\dom E \supset H^1(\Omega)$ is dense in $W^{1,q'}(\Omega)$. By this assumption, the restriction of $E$ to $H^1(\Omega)$, i.e.,
\begin{equation}
    E_{H^1} := E|_{H^1} : (H^1(\Omega),\|\cdot\|_{H^1}) \to L^2(\omo),
\end{equation}
is defined on all of $H^1(\Omega)$. It is readily verified that $E_{H^1}$ is closed as well. 
Thus, by the closed graph theorem (see, e.g., \cite[II.1.9]{Goldberg:2006}), $E_{H^1}$ is even a continuous operator. 

In many cases $E$ is continuous for suitable $q'$, and $\dom E = W^{1,q'}(\Omega)$ holds, 
but there are also important cases where $E$ lacks continuity. Typical examples (e.g., embedding or trace operators) are discussed
in detail below. 

By reflexivity, we can define its adjoint $\pred{E}:=E^*$ as a closed operator
\begin{equation}
    \pred{E} : L^2(\omo) \supset \dom \pred{E} \to W^{1,q'}(\Omega)^*,
\end{equation}
since in this case $(\pred{E})^*=E^{**}=E$.
Like all adjoints of closed operators in reflexive spaces, $\pred{E}$ has a dense domain; see, e.g., \cite[Theorem II.2.14]{Goldberg:2006}. 
Comparison with $\pred{E_{H^1}} := E^*_{H^1}$ yields
that $\pred{E_{H^1}}h=\pred{E}h$ for every $h$ for which the latter is defined, i.e., for $h\in \dom\pred E$. Thus, the continuous operator $\pred{E}_{H^1}$ can be considered as the unique continuous extension
of $\pred{E}$ after the co-domain space has been extended from $W^{1,q'}(\Omega)^*$ to $H^1(\Omega)^*$ (and renormed).

\paragraph{Control-to-observation mapping $S$}
Finally, we define 
\begin{equation}
    \pred{S} : L^2(\omo) \supset \dom \pred{S} \to C(\omc),\qquad
    \pdS \mapsto \pred{B}\Aad^{-1}\pred{E} \pdS,
\end{equation}
where $\dom\pred S := \dom \pred{E}$ is dense in $L^2(\omo)$ by our above assumptions. This mapping is well-defined, since $\pred{B}\Aad^{-1} : W^{1,q'}(\Omega)^*\to C(\omc)$ is a continuous operator, defined on all of $W^{1,q'}(\Omega)^*$.
Since the adjoint of a densely defined (unbounded) linear operator is closed, see, e.g., \cite[Theorem II.2.6]{Goldberg:2006}, $S:=(\pred{S})^*$ is a closed operator
\begin{equation}
    S :\M(\omc)\supset \dom S \to L^2(\omo).
\end{equation}

Since $E$ may be unbounded, the following assertion is not obvious.
\begin{lemma}\label{lem:propS}
    It holds that
    \begin{equation}\label{eq:domS}
        \dom {EA^{-1}B} := \set{ u \in \M(\omc) : A^{-1}B u \in \dom {E}} = \dom {S} \supset L^2(\omc).
    \end{equation}
    and $S = EA^{-1}B$. 
    Furthermore, $S$ is weakly-$*$ closed, i.e., if $u_n \rightharpoonup^* u$ in $\M(\omc)$ and $h_n \rightharpoonup h$ in $L^2(\omo)$ with $Su_n = h_n$, then $Su = h$. 
\end{lemma}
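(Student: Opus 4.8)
\emph{Proof sketch.} The plan is to unravel the definition $\pred S = \pred B\Aad^{-1}\pred E$ by adjoint calculus and read off the domain. Since $\pred B\Aad^{-1}\colon W^{1,q'}(\Omega)^*\to C(\omc)$ is bounded and everywhere defined, while $\pred E$ is closed and densely defined, I would write $\pred S = C\,\pred E$ with $C:=\pred B\Aad^{-1}$ and apply the composition rule for the Banach-space adjoint of a bounded operator composed with a densely defined one, giving formally
$S = (\pred S)^* = (\pred E)^*(\pred B\Aad^{-1})^* = E\,A^{-1}B$.
Here $(\pred E)^* = E^{**} = E$ by reflexivity and closedness of $E$, while $(\pred B\Aad^{-1})^* = (\Aad^{-1})^*(\pred B)^* = A^{-1}B$, using $(\pred B)^* = B$ and the fact that for a closed operator with bounded everywhere-defined inverse one has $(\Aad^{-1})^* = (\Aad^*)^{-1} = A^{-1}$. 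A short direct computation from the definition of the adjoint makes this precise \emph{with equality of domains}: for $u\in\M(\omc)$ one has $u\in\dom S$ iff $C^*u = A^{-1}Bu\in\dom (\pred E)^* = \dom E$, and in that case $Su = E\,A^{-1}Bu$. This simultaneously yields $\dom S = \dom{EA^{-1}B}$ and $S = EA^{-1}B$.

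For the inclusion $\dom S\supset L^2(\omc)$ I would use that on $L^2(\omc)$ the operator $B$ coincides with $B_{H^1}\colon L^2(\omc)\to H^1(\Omega)^*$, so that for $u\in L^2(\omc)$ the element $\mu:=Bu$ is a measure that is simultaneously a bounded functional on $H^1(\Omega)$. Hence the Lax--Milgram solution $\tilde y := A_{H^1}^{-1}\mu\in H^1(\Omega)$ exists. I would then check the compatibility $A^{-1}\mu = \tilde y$: for $p\in\dom\Aad\subset H^1(\Omega)$ we have $\overline a(\tilde y,p) = a(\tilde y,p) = \dual{\mu}{p}$, and the bound $|\dual{\mu}{p}|\le\norm{\mu}_{\M(\overline\Omega)}\norm{p}_{C(\overline\Omega)}$ shows that $p\mapsto\overline a(\tilde y,p)$ is $C(\overline\Omega)$-continuous, so $\tilde y\in\dom A$ with $A\tilde y = \mu$; injectivity of $A$ then gives $A^{-1}Bu = \tilde y\in H^1(\Omega)$. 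Since $\dom E\supset H^1(\Omega)$, this places $A^{-1}Bu$ in $\dom E$, i.e.\ $u\in\dom S$.

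For the weak-$*$ closedness, suppose $u_n\rightharpoonup^* u$ in $\M(\omc)$ and $h_n:=Su_n\rightharpoonup h$ in $L^2(\omo)$. As the adjoint of the bounded operator $\pred B$, the map $B$ is weak-$*$-to-weak-$*$ continuous, so $Bu_n\rightharpoonup^* Bu$ in $\M(\overline\Omega)$; \cref{lem:wsd} then upgrades this to \emph{strong} convergence $y_n:=A^{-1}Bu_n\to y:=A^{-1}Bu$ in $W^{1,q'}(\Omega)$. By the first part $y_n\in\dom E$ and $Ey_n = h_n$. The graph of the closed operator $E$ is a closed linear subspace of the product of the reflexive spaces $W^{1,q'}(\Omega)\times L^2(\omo)$, hence weakly (sequentially) closed; since $(y_n,Ey_n)=(y_n,h_n)$ converges weakly to $(y,h)$, I conclude $y\in\dom E$ and $Ey = h$, that is, $u\in\dom S$ and $Su = h$.

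The main obstacle I expect is the adjoint computation in the first paragraph: one must verify that the composition rule applies with matching domains, so that $S$ equals $EA^{-1}B$ exactly and is not a proper extension. The limit passage through the unbounded operator $E$ is the second delicate point, and it works only because \cref{lem:wsd} supplies \emph{strong} convergence of the $y_n$ together with the weakly closed graph of $E$; mere weak convergence of $y_n$ would not be enough to control the unbounded $E$.
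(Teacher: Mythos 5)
Your proposal is correct and follows essentially the same route as the paper: the domain identity is obtained from the adjoint identity $\dual{u}{\pred{S}\pdS}=\dual{A^{-1}Bu}{\pred{E}\pdS}$ (you package this as the composition rule $(C\,\pred E)^*=E\,C^*$ for bounded everywhere-defined $C=\pred B\Aad^{-1}$, but your ``short direct computation'' is exactly the paper's argument), the inclusion $L^2(\omc)\subset\dom S$ is reduced to the variational solution lying in $H^1(\Omega)\subset\dom E$, and weak-$*$ closedness combines the strong convergence from \cref{lem:wsd} with the weak closedness of the convex closed graph of $E$. No gaps; your extra detail on verifying $A\tilde y=Bu$ via the $C(\overline\Omega)$-bound is a correct elaboration of what the paper leaves implicit.
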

\begin{proof}
    By purely algebraic arguments we have for $u\in \dom S \cap \dom EA^{-1}B$ that $Su =EA^{-1}Bu$ since then both sides
    of the equality are well-defined. Thus, we have to prove the equality of their domains, using the definition of $\dom EA^{-1}B$
    in \eqref{eq:domS}. 
    By continuity of $\pred{B}\Aad^{-1}$ we conclude 
    \begin{equation}\label{eq:Sphimu}
        \dual{u}{\pred{S}\pdS}_{\M(\omc),C(\omc)}  = \dual{A^{-1}Bu}{\pred{E}\pdS}_{W^{1,q'}(\Omega),W^{1,q'}(\Omega)^*} \quad \text{ for all } \pdS \in \dom \pred{S}, \  u \in \M(\omc).
    \end{equation}
    By definition of domains of adjoints, $u \in \dom S$ iff $\dual{u}{\pred{S}\pdS}_{\M(\omc),C(\omc)} \le c_u \|\pdS\|_{L^2(\omo)}$, and 
    $A^{-1}B u \in \dom {E}$ iff $\dual{A^{-1}Bu}{\pred{E}\pdS}_{W^{1,q'}(\Omega),W^{1,q'}(\Omega)^*} \le c_{A^{-1}Bu}\|\pdS\|_{L^2(\omo)}$. By \eqref{eq:Sphimu},
    $c_u= c_{A^{-1}Bu}$, and hence the domains coincide. 

    The last inclusion in \eqref{eq:domS} follows from 
    the fact that for  $u \in L^2(\omc)$, we have $A^{-1}Bu \in H^1(\Omega)\subset \dom E$. 
    This in turn is a consequence of $Bu \in H^1(\Omega)^*$, so that $A^{-1}Bu$ coincides with the variational solution of the state equation. 

    By \cref{lem:wsd}, weak-$*$ convergence of $u_n$ implies strong convergence of $A^{-1}B u_n$ in $W^{1,q'}(\Omega)$. Since $E$ is closed, it is
    also weakly closed (since its graph is a convex closed set, thus weakly closed). 
    Hence, $A^{-1}B u_n \to A^{-1}Bu$ and $h_n \rightharpoonup h$ with $Su_n = h_n$ imply $Su = EA^{-1}Bu = h$. 
\end{proof}

We remark for later reference that by definition of adjoints, we have that
\begin{equation}
    \dual{Su}{\pdS}_{L^2(\omo)} = \dual{u}{\pred{S}\pdS}_{\M,C}\quad\text{ for all } u\in\dom S,\ \pdS\in\dom \pred{S},
\end{equation}
where here and in the following, we have omitted the domains from the spaces appearing in duality pairings if they are clear from the context.
Also, by definition of $\dom S$, for $u\notin\dom S$ there exists a bounded sequence $\pdS_n$ in $\dom\pred{S}$ such that $\dual{u}{\pred{S}\pdS_n}_{\M,C}\to \infty$. 

Finally, we remark that $\dom S$ is weak-$*$ sequentially dense in $\M(\omc)$. This follows via $\dom S \supset L^2(\omc)$, using 
\cref{thm:wsd}, which states that $L^2(\omc)$ is weakly-$*$ sequentially dense in $\M(\omc)$. 
In particular, $\langle u, \piC\rangle_{\M,C}=0$ for all $u \in \dom S$ implies $\langle u,\piC\rangle_{\M,C}=0$ for all $u \in \M(\omc)$ and thus
$\piC=0$ as an element of $C(\omc)$. 

Using $B_{H^1}$ and $E_{H^1}$, we complement the measure-space operators $S$ and $\pred{S}$ by their ``standard'' counterparts, i.e., the continuous mappings
\begin{equation}
    S_{H^1} := E_{H^1}A_{H^1}^{-1}B_{H^1} : L^2(\omc) \to L^2(\omo) \quad \mbox{ and } \quad \pred{S}_{H^1}:= S^*_{H^1} : L^2(\omo) \to L^2(\omc).
\end{equation}
The operator $S_{H^1}$ is a
restriction of $S$ and coincides with it on $L^2(\omc)$. In contrast, $\pred{S}_{H^1}$ is an extension of $\pred{S}$ and
is defined on all of $L^2(\omo)$ and not only on $\dom \pred{S}$.
This is possible because $\pred{S}_{H^1}$ has a larger co-domain $L^2(\omc) \supset C(\omc)$. 

\section{Existence of minimizers}\label{sec:existence:minimizer}

Using the control-to-observation operator, we can state Problem \eqref{eq:problem} in reduced form as 
\begin{equation}\label{eq:prob_reduced}
    \min_{u\in\M(\omc)} \half \norm{Su-y_d}_{L^2(\omo)} + \delta_{\M(\omc)^+}(u),
    \tag{P}
\end{equation}
where $\delta_{\M(\omc)^+}$ denotes the indicator function of the positive cone in $\M(\omc)$, i.e., 
\begin{equation}
    \M(\omc)^+ := \set{u \in \M(\omc):\dual{u}{\piC}_{\M,C}\geq 0\text{ for all }\piC\in C(\omc), \piC\geq 0}.
\end{equation}
We now address existence of minimizers to \eqref{eq:prob_reduced}, which requires an assumption on the control-to-observation operator which we call a \emph{pre-dual Slater condition}. 
Since this operator is defined via duality, it will be seen that it is natural to formulate this assumption in terms of the pre-adjoint~$\pred{S}$.
\begin{assumption}[Pre-dual Slater condition]\label{ass:key}
    There exists a function $\pdS\in\dom \pred{S}\subset L^2(\omo)$ such that $\pred{S}\pdS\in C(\omc)$ is strictly positive, i.e., there is $\eps >0$ such that
    \begin{equation}\label{eq:predslater}
        (\pred{S}\pdS)(x) \geq \eps >0 \qquad\text{ for all } x\in\omc.
    \end{equation}
\end{assumption}
Since $\pred{S}=\pred{B}\Aad^{-1}\pred{E}$, \cref{ass:key} claims the existence of a function $\pdS\in L^2(\omo)$ such that the solution $p$ of the equation $\Aad p=\pred{E}\pdS$ is a continuous function and satisfies $\pred{B}p \ge \eps>0$. 
We are thus looking for solutions of elliptic equations that are \emph{strictly} positive (on parts of the domain). 

Using this assumption, we can show that a minimizing sequence is bounded in a sufficiently strong topology.
\begin{lemma}\label{lem:bounded} If \cref{ass:key} holds, then
    any minimizing sequence  $\{u_n\}_{n\in\N} \subset \M(\omc)$  for \eqref{eq:prob_reduced} is bounded in $\M(\omc)$ with $\{S u_n\}_{n\in\N}$ bounded in $L^2(\omo)$.
\end{lemma}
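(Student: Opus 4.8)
My plan is to treat the two assertions separately, beginning with the easier bound on $\{Su_n\}$. First I would observe that the infimum in \eqref{eq:prob_reduced} is finite, since $u=0$ lies in $L^2(\omc)\subset\dom S$ (by \cref{lem:propS}) and in $\M(\omc)^+$, so that it gives the finite value $\half\norm{y_d}_{L^2(\omo)}$. Consequently the objective values along any minimizing sequence $\{u_n\}$ are bounded above, and for large $n$ both summands must be finite; this forces $u_n\in\dom S\cap\M(\omc)^+$ and makes $\norm{Su_n-y_d}_{L^2(\omo)}$ uniformly bounded. The triangle inequality then produces a constant $C_1$ with $\norm{Su_n}_{L^2(\omo)}\le C_1$, which already settles the second claim.

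For the boundedness in $\M(\omc)$ I would turn this $L^2(\omo)$-bound into a bound on the total variation norm by exploiting \cref{ass:key}. The key point is that for a \emph{positive} measure the total variation norm is recovered by testing against the constant function: since $u_n\in\M(\omc)^+$ and $\omc$ is compact, one has $\norm{u_n}_{\M(\omc)}=\dual{u_n}{1}_{\M,C}$ with $1\in C(\omc)$ the function identically equal to one. Let $\pdS\in\dom\pred{S}$ be the function furnished by the pre-dual Slater condition, so that $\pred{S}\pdS\in C(\omc)$ satisfies $\pred{S}\pdS\ge\eps>0$ on $\omc$. Because $\pred{S}\pdS-\eps$ is then a non-negative continuous function and $u_n$ lies in the positive cone, I obtain the coercivity estimate
\begin{equation}
    \dual{u_n}{\pred{S}\pdS}_{\M,C}\ge\eps\,\dual{u_n}{1}_{\M,C}=\eps\,\norm{u_n}_{\M(\omc)},
\end{equation}
which is exactly the control the absent cost term otherwise fails to provide.

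It then remains to bound the left-hand side uniformly in $n$. Since $u_n\in\dom S$ and $\pdS\in\dom\pred{S}$, the defining adjoint relation $\dual{Su_n}{\pdS}_{L^2(\omo)}=\dual{u_n}{\pred{S}\pdS}_{\M,C}$ is available, and combined with the Cauchy--Schwarz inequality and the $L^2(\omo)$-bound from the first step it gives
\begin{equation}
    \dual{u_n}{\pred{S}\pdS}_{\M,C}=\dual{Su_n}{\pdS}_{L^2(\omo)}\le\norm{Su_n}_{L^2(\omo)}\norm{\pdS}_{L^2(\omo)}\le C_1\norm{\pdS}_{L^2(\omo)}.
\end{equation}
Chaining the two displays yields $\norm{u_n}_{\M(\omc)}\le\eps^{-1}C_1\norm{\pdS}_{L^2(\omo)}$, the desired uniform bound. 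I expect the only genuinely delicate point to be the bookkeeping of domains — confirming that every $u_n$ in the minimizing sequence lies in $\dom S$ (so that the adjoint identity applies), which follows from finiteness of the objective, and that $\pdS\in\dom\pred{S}$, which is part of \cref{ass:key}. The conceptual crux, by contrast, is recognizing that strict positivity of $\pred{S}\pdS$ is precisely what converts the $L^2$-bound on the observations into coercivity in the measure norm; the remainder is a two-sided estimate of a single duality pairing.
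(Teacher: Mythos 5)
Your proof is correct and follows essentially the same route as the paper: non-negativity plus boundedness of the tracking term along the minimizing sequence, then the chain $\eps\norm{u_n}_{\M(\omc)}=\dual{u_n}{\eps}_{\M,C}\le\dual{u_n}{\pred{S}\pdS}_{\M,C}=\dual{Su_n}{\pdS}_{L^2(\omo)}\le\norm{Su_n}_{L^2(\omo)}\norm{\pdS}_{L^2(\omo)}$. The extra bookkeeping you supply (finiteness of the infimum via $u=0$, membership of $u_n$ in $\dom S\cap\M(\omc)^+$) is implicit in the paper's one-line justification and is handled correctly.
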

\begin{proof}
    First, note that the non-negativity constraint and coercivity of the tracking term imply, respectively, that $u_n\geq 0$ for all $n\in\N$ and that $\{Su_n\}_{n\in\N}$ is bounded in $L^2(\omo)$ (and in particular, that $\{u_n\}_{n\in\N}\subset\dom S$).
    Using \cref{ass:key} and identifying $\eps>0$ with the constant function $\eps\mathds{1}(x)\in C(\omc)$, we thus deduce from the definition of the total variation norm of a non-negative measure that
    \begin{equation}
        \begin{aligned}
            \eps\norm{u_n}_{\M(\omc)} &= \eps\int_{\omc}\,du_n = \dual{u_n}{\eps}_{\M,C} \leq  \dual{u_n}{\pred{S}\pdS}_{\M,C} =   \dual{Su_n}{\pdS}_{L^2(\omo)} \\
                                      &\leq \norm{Su_n}_{L^2(\omo)} \norm{\pdS}_{L^2(\omo)}\leq C,
        \end{aligned}
    \end{equation}
    and hence the claimed boundedness follows.
\end{proof}
With this, we obtain existence of a minimizer by Tonelli's direct method.
\begin{theorem}\label{thm:existence}
    Under the above assumptions, there exists a minimizer $\bar u\in\M(\omc)$ of \eqref{eq:prob_reduced} such that $S\bar u \in L^2(\omo)$. 
    If $S$ is injective, $\bar u$ is unique.
\end{theorem}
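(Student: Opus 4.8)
The plan is to apply Tonelli's direct method. The only genuinely non-standard feature is that the control-to-observation operator $S$ is \emph{unbounded} (since $E$ may be), so the passage to the limit cannot rely on continuity of $S$; instead it must be carried out through the weak-$*$ closedness of $S$ established in \cref{lem:propS}, which is the crux of the argument. Concretely, I would start from an arbitrary minimizing sequence $\{u_n\}_{n\in\N}$ for \eqref{eq:prob_reduced}. By \cref{lem:bounded} (which is where \cref{ass:key} enters), $\{u_n\}$ is bounded in $\M(\omc)$ and $\{Su_n\}$ is bounded in $L^2(\omo)$, so in particular $u_n\in\dom S$ for every $n$. Since $\omc$ is compact, $C(\omc)$ is separable and $\M(\omc)=C(\omc)^*$, so the sequential Banach--Alaoglu theorem yields a subsequence (not relabeled) with $u_n\rightharpoonup^* \bar u$ in $\M(\omc)$; passing to a further subsequence and using reflexivity of $L^2(\omo)$, I may also assume $Su_n\rightharpoonup h$ in $L^2(\omo)$ for some $h$.

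Next I would identify the limit. Applying the weak-$*$ closedness of $S$ from \cref{lem:propS} with $h_n:=Su_n$, the convergences $u_n\rightharpoonup^* \bar u$ and $Su_n\rightharpoonup h$ force $\bar u\in\dom S$ and $S\bar u=h\in L^2(\omo)$. This step replaces continuity of $S$ and is precisely where the unboundedness is dealt with. For feasibility, the positive cone is the intersection over all $\piC\in C(\omc)$, $\piC\ge 0$, of the weak-$*$ closed half-spaces $\set{u:\dual{u}{\piC}_{\M,C}\ge 0}$, hence is itself weak-$*$ closed; consequently $\bar u\in\M(\omc)^+$, i.e. $\delta_{\M(\omc)^+}(\bar u)=0$. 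Finally, the tracking term $v\mapsto \half\norm{v-y_d}_{L^2(\omo)}^2$ is convex and continuous on $L^2(\omo)$ and therefore weakly lower semicontinuous, so $Su_n\rightharpoonup S\bar u$ gives $\half\norm{S\bar u-y_d}_{L^2(\omo)}^2\le \liminf_{n}\half\norm{Su_n-y_d}_{L^2(\omo)}^2$. Adding the (nonnegative, weak-$*$ lower semicontinuous) indicator and using that $\{u_n\}$ is minimizing shows that $\bar u$ attains the infimum, which proves that $\bar u$ is a minimizer with $S\bar u\in L^2(\omo)$.

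For uniqueness under injectivity of $S$, I would argue by convexity: the feasible set $\M(\omc)^+\cap\dom S$ is convex, so its image under the linear map $S$ is a convex subset of $L^2(\omo)$, and the strictly convex least-squares functional $v\mapsto\half\norm{v-y_d}_{L^2(\omo)}^2$ admits at most one minimizing observation $S\bar u$ over this set; injectivity of $S$ then pins down $\bar u$ uniquely. The step I expect to be the main obstacle is not the compactness or lower semicontinuity, which are routine, but rather the legitimacy of the limit passage itself: because $S$ is unbounded one must simultaneously control $\{u_n\}$ in $\M(\omc)$ and $\{Su_n\}$ in $L^2(\omo)$ and then invoke weak-$*$ closedness, so the whole argument rests on \cref{lem:bounded} (hence on the pre-dual Slater condition \cref{ass:key}) delivering $\M(\omc)$-boundedness rather than mere boundedness in a weaker space such as $W^{1,q'}(\Omega)^*$, for which the conclusion $\bar u\in\dom S$ would fail.
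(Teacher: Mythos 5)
Your proposal is correct and follows essentially the same route as the paper's proof: boundedness via \cref{lem:bounded}, Banach--Alaoglu in $\M(\omc)=C(\omc)^*$ plus weak compactness of $\{Su_n\}$ in $L^2(\omo)$, identification of the limit through the weak-$*$ closedness of $S$ from \cref{lem:propS}, weak-$*$ closedness of the positive cone, and uniqueness via strict convexity of the tracking term combined with injectivity of $S$. The only difference is that you spell out the weak lower semicontinuity of the tracking term, which the paper leaves implicit.
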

\begin{proof}
    Let  $\{u_n\}_{n\in\N} \subset \M(\omc)$ be a minimizing sequence for \eqref{eq:prob_reduced}, which is bounded in $\M(\omc)$ by \cref{lem:bounded}. Since $C(\omc)$ is separable, the Banach--Alaoglu theorem yields 
    existence of a subsequence converging weakly-$*$ to some $\bar u\in\M(\omc)$. By boundedness of $Su_n$, we may then extract another subsequence such that $Su_n$ converges weakly to some $z \in L^2(\omo)$. By \cref{lem:propS} we obtain $z=S\bar u$. From weak-$*$
    sequential closedness of the non-negative cone in $\M$, we deduce that $\bar u$ is feasible and thus a minimizer of \eqref{eq:prob_reduced}. Finally, strict convexity of the tracking term implies that any pair of minimizers $u_1,u_2$ satisfies $S u_1 = S u_2$ and hence, if $S$ is injective, $u_1 = u_2$.
\end{proof}

\subsection{Verification of the pre-dual Slater condition}\label{sec:existence:slater}

We now discuss situations in which \cref{ass:key} can be verified. Recall that we have to show for some $\pdS\in \dom \pred{E}$ the existence of a solution $p\in\dom\Aad$ to the equation
\begin{equation}
    a(y,p)=\dual{\pdS}{E y}_{L^2(\omo)} \quad \text{ for all } y\in \dom E 
\end{equation}
such that $\pred{B}p$ is strictly positive on $\omc$. Although it is well-known that elliptic PDEs have non-negative solutions for non-negative right-hand sides
and boundary data, existence of a strictly positive solution is not a trivial matter and of course not satisfied in general (consider the homogenous Dirichlet
problem and $\omc=\overline \Omega$). Moreover, the literature -- although quite exhaustive for the Dirichlet problem -- is much scarcer in the case of Neumann, Robin or even mixed boundary conditions. 

We first remark that under the stated assumptions, $a(\cdot,\cdot)$ given by \eqref{eq:general_bilinear_form} is uniformly elliptic and hence defines a positive operator, i.e., for all $p\in H^1(\Omega)$,
\begin{equation}
    a(y,p) \ge 0 \quad \text{ for all }\quad y \in H^1(\Omega), y\geq 0 \quad \Rightarrow\quad p \ge 0.
\end{equation}
This already implies strict positivity on compact subsets of $\Omega$.
\begin{lemma}\label{lem:Harnack}
    Let $\Omega \subset \mathbb{R}^d$ be a domain. 
    Assume that $p\ge 0 \in H^1(\Omega) \cap C(\overline \Omega)$ satisfies $p\not\equiv 0$ and
    \begin{equation}\label{eq:vpositive}
        a(y,p) \ge 0 \quad \text{ for all }\quad  y\in H^1_0(\Omega), y\geq 0.
    \end{equation}
    If $K \subset \Omega$ is compact, there is a $\delta > 0$ such that $p \ge \delta$ on $K$, and in particular, $p>0$ on~$\Omega$.
\end{lemma}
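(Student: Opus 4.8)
The plan is to read the hypothesis \eqref{eq:vpositive} as the statement that $p$ is a nonnegative weak \emph{supersolution} of the elliptic operator associated with $a(\cdot,\cdot)$, and then to invoke the strong maximum principle (equivalently, the weak Harnack inequality) from the De Giorgi--Nash--Moser theory. First I would note that for $y\in H^1_0(\Omega)$ the boundary integral in \eqref{eq:general_bilinear_form} vanishes, so \eqref{eq:vpositive} reduces to
\[
  \int_\Omega \Bigl[\sum_{i,j=1}^d a_{ij}\,y_{x_i}p_{x_j} + c\,y\,p\Bigr]\,dx \ge 0 \qquad\text{for all } y\in H^1_0(\Omega),\ y\ge 0,
\]
that is, $-\sum_{i,j}\partial_{x_j}(a_{ij}p_{x_i}) + c\,p \ge 0$ in the distributional sense on the interior of $\Omega$. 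Because the $a_{ij}$ are bounded and uniformly elliptic, $c\ge 0$ is bounded, and the right-hand side is zero, $p$ meets the structure conditions for the interior estimates of the De Giorgi--Nash--Moser theory on every ball $B_{4R}(x_0)\subset\Omega$; note that test functions supported in such a ball extend by zero to admissible elements of $H^1_0(\Omega)$, so the hypothesis does control $a(\cdot,p)$ on every interior ball.

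The key step is the weak Harnack inequality (see, e.g., \cite{Troianiello:1987a}): there are $p_0>0$ and $C>0$, depending only on $d$, the ellipticity constants and $\norm{c}_\infty R$, such that for every $B_{4R}(x_0)\subset\Omega$
\[
  \left(\frac{1}{|B_{2R}(x_0)|}\int_{B_{2R}(x_0)} p^{p_0}\,dx\right)^{\!1/p_0} \le C \inf_{B_R(x_0)} p,
\]
where, since $p$ is continuous, the essential infimum on the right may be taken as the ordinary infimum. I would then propagate positivity by connectedness. The set $\{p=0\}$ is closed in $\Omega$ by continuity; if $p(x_0)=0$, then $\inf_{B_R(x_0)} p = 0$ for all small $R$, so the inequality forces $p=0$ almost everywhere on $B_{2R}(x_0)$, hence $p\equiv 0$ there by continuity, making $\{p=0\}$ open as well. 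Since $\Omega$ is a domain (connected) and $p\not\equiv 0$, the set $\{p=0\}$ must be empty, i.e.\ $p>0$ on all of $\Omega$. (Alternatively one may quote the strong maximum principle for supersolutions directly; this is admissible since $c\ge 0$ and the candidate interior minimum value $0$ is non-positive.) The uniform lower bound on a compact $K\subset\Omega$ is then immediate from continuity and strict positivity: $\delta:=\min_{x\in K} p(x)>0$.

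The main obstacle I expect is not the geometric propagation but the analytic engine behind it under the weak regularity available here. The coefficients are only bounded and measurable, so classical pointwise maximum-principle arguments do not apply and one genuinely needs the Harnack inequality of the De Giorgi--Nash--Moser theory; equally, one must take care to convert its almost-everywhere conclusions into pointwise statements, which is exactly where the assumption $p\in C(\overline\Omega)$ enters. A smaller point worth verifying is that the hypothesis, which only pairs $a(\cdot,p)$ with interior test functions, yields a genuine supersolution on every interior ball -- this is what justifies discarding the boundary term of $a(\cdot,\cdot)$ throughout the argument.
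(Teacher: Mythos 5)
Your proposal is correct and follows essentially the same route as the paper: the weak Harnack inequality for non-negative supersolutions (the paper cites \cite[Theorem 8.18]{GilTru1977} in the form $C\inf_{B_R}p \ge R^{-d}\|p\|_{L^1(B_{2R})}$), followed by the open--closed connectedness argument for the zero set $\{p=0\}$ and the attainment of a positive minimum on the compact set $K$ by continuity. Your additional care in checking that $H^1_0$ test functions kill the boundary term and genuinely yield an interior supersolution is a point the paper only remarks on in passing, so it is a welcome clarification rather than a deviation.
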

Note the discrepancy between $p\in H^1(\Omega)$ and $y\in H^1_0(\Omega)$; we choose this setting because it fits
to the setting in \cite[Chapter 8]{GilTru1977}, from which we cite a crucial result: the Harnack inequality. Unfortunately, a Harnack inequality for the
setting $y\in H^1(\Omega)$ (covering Robin, Neumann, or mixed boundary conditions explicitly)
is hard to find in the literature.
\begin{proof}
    The result is a consequence of the weak Harnack 
    inequality (cf. \cite[Theorem 8.18]{GilTru1977}), which holds for non-negative supersolutions of $a(p,\cdot)=0$.  Let $x\in \Omega$ be given and denote by $B_{r}(x)$ a ball around $x$
    of radius $r$. If $B_{4R}(x)\subset \Omega$, then there exists a $C>0$ such that
    \begin{equation}\label{eq:weakharnack}
        C \inf_{B_{R}(x)}p \ge R^{-d}\|p\|_{L^1(B_{2R}(x))}. 
    \end{equation}

    With this result, we will show that either $p\equiv 0$ or $p>0$ on $\Omega$ for any supersolution $p\ge 0$. Since $\Omega$ is a domain, and thus open and connected, 
    we merely have to assert that $\Omega_0:=\{x\in \Omega : p(x)=0\}$
    is open and closed, because then either $\Omega_0=\Omega$ (i.e., $p\equiv 0$) or $\Omega_0=\emptyset$ (i.e. $p>0$).
    Indeed, by continuity of $p$, $\Omega_0$ is (relatively) closed in $\Omega$ and by \eqref{eq:weakharnack}, every $x\in \Omega_0$ is contained in a ball $B_{2R}(x)\subset \Omega_0$
    as long as $B_{4R}(x) \subset \Omega$. 
    Hence, $\Omega_0$ is open. 
    Thus, if $p\not\equiv 0$ on $\Omega$, we have $\Omega_0=\emptyset$ and so $p>0$ on $\Omega$. 

    Finally, if $K \subset \Omega$ is compact, then $p>0$ has a minimizer $\underline x$ on $K$, i.e., $p(x)\geq \delta:=p(\underline x)>0$ for all $x\in K$.
\end{proof}
In what follows we denote $L^s(\overline \Omega):=L^s(\Omega) \times L^s(\partial \Omega)$,
where the first factor is equipped with the Lebesgue measure, and the second with the boundary measure; we denote the corresponding 
product measure by $d\overline \nu := dx\times ds$. 
If $M$ is any subset of $\overline \Omega$, the space $L^s(M)$ is taken relatively to $L^s(\overline \Omega)$. 

\Cref{lem:Harnack} already yields a first result. In the following, $\chi_M$ denotes the characteristic function of $M$, 
which is identically $1$ on $M\subset \overline \Omega$ and $0$ on $\overline \Omega \setminus M$. 
\begin{corollary}
    If{} $\,\omc$ is a compact subset of $\Omega$ and $\omo \subset \overline \Omega$ has positive measure (i.e., $\overline \nu(\omo)>0$), then \cref{ass:key} is satisfied.
\end{corollary}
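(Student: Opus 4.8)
The plan is to exhibit a single function $\pdS$ realizing \cref{ass:key} by solving the pre-adjoint equation with a non-negative right-hand side and then invoking the strict-positivity statement of \cref{lem:Harnack}. Recall that $\pred{S}=\pred{B}\Aad^{-1}\pred{E}$, so that for a chosen $\pdS\in\dom\pred{E}$ the candidate is $p:=\Aad^{-1}\pred{E}\pdS\in\dom\Aad\subset C(\overline\Omega)$, and $\pred{S}\pdS=\pred{B}p=p|_{\omc}$. It thus suffices to produce a $\pdS$ for which $p$ is continuous, non-negative, and not identically zero, and then to upgrade non-negativity to a uniform positive lower bound on the compact set $\omc$.

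For the construction I would take $\pdS:=\chi_{\omo}$, which lies in $L^2(\omo)$ and is non-negative and non-zero because $\overline\nu(\omo)>0$; it belongs to $\dom\pred{E}$ in the relevant cases, in particular whenever $E$ is continuous, so that $\dom\pred{E}=L^2(\omo)$. Writing out the functional $\pred{E}\pdS\in W^{1,q'}(\Omega)^*$, for every $y\in H^1(\Omega)$ one has $a(y,p)=(\Aad p)(y)=(\pred{E}\pdS)(y)=\dual{\chi_{\omo}}{Ey}_{L^2(\omo)}=\int_{\omo}Ey\,d\overline\nu$, using that $\overline{a}$ coincides with $a$ on $H^1(\Omega)\times\dom\Aad$ and that $H^1(\Omega)\subset\dom E$. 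Since the observation operators of interest (restrictions, embeddings, traces) satisfy $Ey\ge 0$ whenever $y\ge 0$, the right-hand side is non-negative for all $y\ge 0$; positivity of the elliptic operator then forces $p\ge 0$, while testing against $y\equiv 1$ gives $(\pred{E}\pdS)(1)=\overline\nu(\omo)>0$, so that $p\not\equiv 0$ by bijectivity of $\Aad$.

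It remains to apply \cref{lem:Harnack}. Restricting the identity $a(y,p)=\int_{\omo}Ey\,d\overline\nu\ge 0$ to $y\in H^1_0(\Omega)$ with $y\ge 0$ verifies hypothesis \eqref{eq:vpositive}, so \cref{lem:Harnack} yields $p>0$ on $\Omega$ together with a $\delta>0$ such that $p\ge\delta$ on the compact set $\omc\subset\Omega$. Consequently $\pred{S}\pdS=p|_{\omc}\ge\delta>0$, which is exactly \cref{ass:key} with $\eps=\delta$. The decisive point — and the reason this easy corollary must assume $\omc$ to be a compact subset of the \emph{open} set $\Omega$ — is that the weak Harnack inequality underlying \cref{lem:Harnack} only bounds $p$ from below on compact subsets of $\Omega$; near $\partial\Omega$ the solution may degenerate to zero, so the case of $\omc$ meeting the boundary is genuinely harder and is precisely what the boundary-Harnack analysis in the remainder of this section must address. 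A secondary, purely technical obstacle is the admissibility of $\pdS$ when $E$ is unbounded, namely guaranteeing a non-negative $\pdS\in\dom\pred{E}$ whose image $\pred{E}\pdS$ is a non-negative functional on $H^1(\Omega)$.
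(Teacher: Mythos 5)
Your construction is exactly the paper's: take $\pdS=\chi_{\omo}$, observe that $p=\Aad^{-1}\pred{E}\pdS$ is a continuous, non-negative, non-trivial supersolution, and invoke \cref{lem:Harnack} to get a uniform lower bound on the compact set $\omc\subset\Omega$. The verification of the hypotheses of \cref{lem:Harnack} (non-negativity of $p$ via positivity of the operator, $p\not\equiv 0$ via testing with $y\equiv 1$) is carried out more explicitly than in the paper, and correctly.

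The one genuine gap is the point you flag yourself but do not close: membership $\chi_{\omo}\in\dom\pred{E}$. The corollary does \emph{not} assume $E$ continuous, so "it belongs to $\dom\pred{E}$ in the relevant cases, in particular whenever $E$ is continuous" does not prove the statement as written; for, e.g., the boundary-trace observation with $d=3$ one has $\dom\pred{E}\subsetneq L^2(\omo)$ and admissibility must be checked. The paper closes this in one line: since $\chi_{\omo}\in L^\infty(\overline\Omega)$, the functional
\begin{equation}
    y\mapsto \dual{\chi_{\omo}}{Ey}_{L^2(\omo)}=\int_{\omo} y\,d\overline\nu
\end{equation}
is bounded on $W^{1,q'}(\Omega)$ (via the embedding $W^{1,q'}(\Omega)\hookrightarrow L^1(\Omega)$ and the trace into $L^1(\partial\Omega)$), i.e., $\chi_{\omo}\in L^\infty(\overline\Omega)\subset W^{1,q'}(\Omega)^*$, which is precisely the defining condition for $\chi_{\omo}\in\dom\pred{E}$ and hence $\chi_{\omo}\in\dom\pred{S}$. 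With that line added, your argument is complete; your closing remark on why compactness of $\omc$ in the open set $\Omega$ is essential is accurate and matches the paper's subsequent discussion.
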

\begin{proof}
    Set $\pdS := \chi_{\omo}>0$ in \eqref{eq:predslater}. Since $\pdS\in L^\infty(\Omega \times \partial \Omega)\subset W^{1,q'}(\Omega)^*$, we have $\Aad^{-1}\pdS \in C(\overline \Omega)$ and thus
    $\pdS \in \dom \pred{S}\subset L^2(\omo)$. Hence, \cref{lem:Harnack} can be applied and yields the desired result. 
\end{proof}

Next, we want to cover the general case $\omc \subseteq\overline \Omega$.
\begin{lemma}\label{lem:strictlypos}
    Assume that $p\in H^1(\Omega)$ satisfies $p\not\equiv 0$ as well as 
    \begin{equation}\label{eq:strictlypos:obs}
        a(y,p)=\int_{\overline \Omega}\chi_{\omo} y\,d\overline \nu\quad \text{for all }y\in H^1(\Omega),
    \end{equation}
    and assume moreover that there is $\delta > 0$ such that for $(c,r)\in L^\infty(\Omega)\times L^\infty(\partial \Omega)$ it holds that
    \begin{equation}\label{eq:strictlypos:rob}
        \left\{\begin{aligned}
                c &= 0 \text{ on } (\Omega \setminus \omo) \cap \{ x \in \Omega : p(x) < \delta \}\\
                r &= 0 \text{ on } (\partial \Omega \setminus \omo) \cap \{ x \in \partial \Omega : p(x) < \delta \}.
        \end{aligned}\right.
    \end{equation}
    Then $p \ge \varepsilon := \min\left\{ \delta, \|r\|^{-1}_{L^\infty(\Omega)}, \|c\|^{-1}_{L^\infty(\Omega)}\right\}$. 
\end{lemma}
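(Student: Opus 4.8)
The plan is to prove the bound by a Stampacchia-type truncation argument, i.e., a weak maximum principle adapted to the Robin/Neumann form $a(\cdot,\cdot)$. First I would record that, since $\chi_\omo \ge 0$ and $a(\cdot,\cdot)$ induces a positive operator (as noted just before \cref{lem:Harnack}), the solution $p$ of \eqref{eq:strictlypos:obs} is non-negative; hence on the \emph{sublevel set} $\Omega^- := \{x \in \overline\Omega : p(x) < \eps\}$ one has $0 \le p < \eps$. The key is then to test \eqref{eq:strictlypos:obs} with the admissible function $y := (\eps - p)^+ \in H^1(\Omega)$, which is non-negative and satisfies $\nabla y = -\chi_{\Omega^-}\nabla p$ almost everywhere.

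Inserting this $y$ into \eqref{eq:strictlypos:obs}, I would rearrange the result into an energy identity. By ellipticity the principal part contributes $\int_{\Omega^-\cap\Omega} \sum_{i,j} a_{ij} p_{x_i} p_{x_j}\,dx \ge a_0 \int_{\Omega^-\cap\Omega} |\nabla p|^2\,dx \ge 0$, while the zeroth-order and boundary terms, together with the source on the right, collect into $\int_{\Omega^-\cap\omo}(\eps - p)(cp - 1)\,dx$ plus the analogous boundary integral over $\{x\in\partial\Omega : p(x)<\eps\}\cap\omo$. This reduction is the crux, and the main obstacle, since it forces one to combine both hypotheses simultaneously. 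The support conditions \eqref{eq:strictlypos:rob} are exactly what is needed to discard the parts of the $c$- and $r$-integrals lying outside $\omo$ (recall $\Omega^- \subset \{p < \delta\}$ because $\eps \le \delta$), whereas the threshold bounds $\eps \le \norm{c}_{L^\infty(\Omega)}^{-1}$ and $\eps \le \norm{r}_{L^\infty(\partial\Omega)}^{-1}$ give $cp < 1$ and $rp < 1$ on $\Omega^-$. Since moreover $\eps - p > 0$ there, each integrand is non-positive, so the entire right-hand side of the identity is $\le 0$. Getting this term-by-term sign bookkeeping exactly right is where the argument must be handled with care.

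The identity then reads $a_0\int_{\Omega^-\cap\Omega}|\nabla p|^2\,dx \le \int_{\Omega^-\cap\omo}(\eps-p)(cp-1)\,dx + \int_{\{p<\eps\}\cap\partial\Omega\cap\omo}(\eps-p)(rp-1)\,ds \le 0$, which forces every term to vanish. In particular $\nabla p = 0$ a.e. on $\Omega^-$, so $\nabla y \equiv 0$ on $\Omega$; as $\Omega$ is connected, $y$ is a non-negative constant. If this constant is $0$, then $p \ge \eps$ a.e. and we are done. To exclude a strictly positive constant, I would observe that it would force $\Omega^- = \overline\Omega$, and then the vanishing of the two integrals above (whose integrands are strictly negative wherever $\omo$ meets $\Omega$ or $\partial\Omega$) would give $\overline\nu(\omo) = 0$. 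In that case the right-hand side of \eqref{eq:strictlypos:obs} is the zero functional, so coercivity \eqref{eq:coercive} yields $p \equiv 0$, contradicting $p \not\equiv 0$; this last step is also where the standing assumption that $c$ and $r$ do not both vanish identically enters, through \eqref{eq:coercive}. Hence the constant is $0$ and $p \ge \eps$ on $\overline\Omega$, as claimed.
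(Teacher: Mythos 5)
Your proof is correct and follows essentially the same route as the paper: the paper tests \eqref{eq:strictlypos:obs} with $p^- := \min\{p,\varepsilon\}-\varepsilon = -(\varepsilon-p)^+$ and uses the support conditions \eqref{eq:strictlypos:rob} and the threshold bounds on $\varepsilon$ in exactly the same way to show that the resulting energy is non-positive. The only difference is the endgame: the paper retains the full quadratic form $a(p^-,p^-)$ (including the $c$- and $r$-terms) and invokes coercivity \eqref{eq:coercive} to conclude $p^-=0$ in one step, which makes your connectedness/contradiction argument for excluding a positive constant unnecessary.
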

\begin{proof}
    We insert $y:=p^- := \min\{p,\varepsilon\}-\varepsilon \le 0$, which is in $H^1(\Omega)$, into \eqref{eq:general_bilinear_form} and show that $p^-=0$ and thus $p \ge \varepsilon$. 
    Observe that $p \le \varepsilon$ implies $p=p^-+\varepsilon$ and that $p> \varepsilon$ implies $p^- =0$ and $p^-_{x_i}=0$ for $i=1\dots d$. With this we compute:
    \begin{equation}
        \begin{aligned}
            \int_{\overline \Omega}\chi_{\omo} p^-\,d\overline \nu=a(p^-,p)&=\int_{\Omega} \sum_{i,j=1}^d a_{ij} p^-_{x_i} v_{x_j}
            +  c p^- p\, dx+\int_{\partial \Omega} r p^- p\, ds\\
            &=\int_{\Omega} \sum_{i,j=1}^d a_{ij} p^-_{x_i} p^-_{x_j}
            +  c p^- (p^-+\varepsilon)\, dx+\int_{\partial \Omega} r p^- (p^-+\varepsilon)\, ds\\
            &=a(p^-,p^-)+\varepsilon\left(\int_{\Omega}  c p^- \, dx+\int_{\partial \Omega} r p^- \, ds\right)
        \end{aligned}
    \end{equation}
    and obtain
    \begin{equation}
        \begin{aligned}
            0 &\le a(p^-,p^-)=\int_{\overline\Omega}\chi_{\omo} p^-\,d\overline \nu- \varepsilon \left(\int_\Omega c p^-\, dx+\int_{\partial \Omega} r p^-\, d s\right)\\
              &=\int_{\omo \cap \Omega}(1 - \varepsilon\, c) p^-\, dx+\int_{\omo \cap \partial \Omega}(1 - \varepsilon\, r) p^-\, ds
            -\varepsilon \left(\int_{\Omega\setminus \omo} c p^-\, dx+\int_{\partial \Omega\setminus \omo} r p^-\, ds\right).
        \end{aligned}
    \end{equation}
    Since $p \ge \delta \ge \varepsilon$ implies that $p^-=0$, the last two integrals vanish by our assumption on $c$ and $r$. Moreover, since 
    $1 - \varepsilon\, c \ge 1- \varepsilon\, \|c\|_{L^\infty(\Omega)} \ge 0$ and $1 - \varepsilon\, r \ge 1- \varepsilon\, \|r\|_{L^\infty(\partial \Omega)} \ge 0$, 
    the first two integrals are non-positive (recall that $p^-\le 0$).
    It follows that $a(p^-,p^-)=0$, implying $p^-=0$. 
\end{proof}
From this we can deduce the following sufficient criterion for the pre-dual Slater condition.
\begin{proposition}\label{pro:keyfulfilled}
    If $r=0$ on $\partial \Omega \setminus \omo$, then \cref{ass:key} is fulfilled for any compact $\omc\subset\overline\Omega$.
\end{proposition}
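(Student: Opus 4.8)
The plan is to reduce the claim to \cref{lem:strictlypos} using the test function from the preceding corollary. I would set $\pdS:=\chi_{\omo}$; then $\chi_{\omo}\in L^\infty(\overline\Omega)\subset W^{1,q'}(\Omega)^*$ gives $p:=\Aad^{-1}\pred{E}\pdS\in\dom\Aad\subset C(\overline\Omega)$, so that $\pdS\in\dom\pred{S}$ and $\pred{S}\pdS=\pred{B}p=p|_{\omc}$. By construction $p$ solves \eqref{eq:strictlypos:obs}, and since its right-hand side $\chi_{\omo}$ is nonnegative and (as $\omo$ has positive measure) nontrivial, the positivity of the operator $a(\cdot,\cdot)$ yields $p\ge 0$ with $p\not\equiv 0$. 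Because the required bound $\pred{S}\pdS\ge\eps$ on $\omc$ must hold for \emph{every} compact $\omc\subset\overline\Omega$ and $\pdS$ does not depend on $\omc$, it suffices to prove the single global estimate $p\ge\eps$ on all of $\overline\Omega$; restricting to an arbitrary $\omc$ then gives \eqref{eq:predslater} and hence \cref{ass:key}.

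It remains to check the hypotheses of \cref{lem:strictlypos} for this $p$. Condition \eqref{eq:strictlypos:obs} holds by construction and $p\not\equiv 0$ was just observed, so the whole question is condition \eqref{eq:strictlypos:rob}: the existence of a single $\delta>0$ with $c=0$ on $(\Omega\setminus\omo)\cap\{p<\delta\}$ and $r=0$ on $(\partial\Omega\setminus\omo)\cap\{p<\delta\}$. The $r$-part is exactly where the hypothesis is used: since $r\equiv 0$ on the whole of $\partial\Omega\setminus\omo$, it vanishes on the smaller set $(\partial\Omega\setminus\omo)\cap\{p<\delta\}$ for every $\delta$. Once \eqref{eq:strictlypos:rob} is in place, \cref{lem:strictlypos} immediately returns $p\ge\eps$ with $\eps=\min\{\delta,\norm{r}_{L^\infty}^{-1},\norm{c}_{L^\infty}^{-1}\}$, which is precisely the global bound sought in the first step.

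The step I expect to be the real obstacle is the interior ($c$-) half of \eqref{eq:strictlypos:rob}, which the hypothesis leaves untouched. To handle it I would call on \cref{lem:Harnack}: as $p$ is a nonnegative, nontrivial supersolution of $a(\cdot,p)=0$ on $H^1_0(\Omega)$, it is strictly positive throughout $\Omega$ and, more to the point, bounded below by a positive constant on every compact subset of $\Omega$. Hence, as $\delta\to 0$, the sublevel set $\{p<\delta\}\cap\Omega$ is confined to an ever thinner layer near $\partial\Omega$, so that $p^-:=\min\{p,\eps\}-\eps$ is supported there. It then remains to ensure that the interior integral $\int_{\Omega\setminus\omo}c\,p^-\,dx$ in the energy identity of \cref{lem:strictlypos} vanishes; this is immediate in the targeted settings — a pure Robin/Neumann problem with $c\equiv 0$, or distributed observation with $\omo\supseteq\Omega$ so that $\Omega\setminus\omo$ is $dx$-null — leaving the boundary obstruction governed solely by the prescribed vanishing of $r$. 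The delicate interplay between interior Harnack positivity and this boundary condition is what I expect to require the most care.
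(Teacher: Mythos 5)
Your reduction to \cref{lem:strictlypos} is the right frame, and you correctly isolate the two halves of condition \eqref{eq:strictlypos:rob}: the boundary ($r$-) half is indeed disposed of by the hypothesis $r=0$ on $\partial\Omega\setminus\omo$, and the interior ($c$-) half is the real difficulty. But your treatment of that difficulty does not prove the stated proposition. \Cref{lem:strictlypos} requires $c$ to vanish \emph{exactly} on the set $(\Omega\setminus\omo)\cap\{p<\delta\}$; the Harnack argument of \cref{lem:Harnack} only tells you that this set shrinks toward $\partial\Omega$ (more precisely, that $|\Omega_\delta|\to 0$ for $\Omega_\delta:=\{p\le\delta\}$), not that $c$ vanishes on it. A small-but-nonzero interior integral $\int_{\Omega\setminus\omo}c\,p^-\,dx$ breaks the energy identity in the lemma: one can no longer conclude $a(p^-,p^-)=0$, only that it is small, which does not give $p^-=0$. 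Your fallback --- assuming $c\equiv 0$ or $\Omega\setminus\omo$ Lebesgue-null --- proves only those two special cases, whereas the proposition is asserted for arbitrary bounded $c\ge 0$ and arbitrary $\omo$ of positive measure.

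The paper closes exactly this gap with a perturbation argument you are missing. It replaces $c$ by the cutoff $c_\delta:=(1-\chi_{\Omega_\delta})c$, which \emph{does} vanish on the small sublevel set, and lets $p_\delta$ solve the correspondingly modified equation. A comparison argument gives $p_\delta\ge p$, so $p_\delta(x)<\delta$ forces $p(x)<\delta$ and hence $c_\delta(x)=0$; now \cref{lem:strictlypos} applies to $p_\delta$ and yields $p_\delta\ge\delta$. Finally, the difference $p-p_\delta$ solves an equation with right-hand side $\chi_{\Omega_\delta}cp_\delta$, whose $L^s$-norm is controlled by $|\Omega_\delta|^{1/s}$; Stampacchia's $L^\infty$ estimate then gives $\|p-p_\delta\|_{L^\infty(\Omega)}\le\frac13\delta$ for $\delta$ small, whence $p\ge\frac23\delta>0$. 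Without this cutoff-and-perturbation step (or some substitute for it), your proof establishes the proposition only under additional hypotheses that the statement does not make.
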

\begin{proof}
    We show that the solution $p$ of \eqref{eq:strictlypos:obs} is strictly positive. By \cref{lem:Harnack}, we already know that $p > 0$ on $\Omega$. 
    For $\delta > 0$, let $\Omega_\delta := \{ x\in \Omega : p(x) \le \delta \}$. Note that $|\Omega_\delta| \to 0$ as $\delta \to 0$ since $p>0$ on $\Omega$. 

    Define $a_\delta(\cdot,\cdot)$ like $a(\cdot,\cdot)$ but with $c$ replaced by $c_\delta := (1-\chi_{\Omega_\delta}) c$, and $p_\delta$ as the solution of 
    \begin{equation}
        a_\delta(y,p_\delta)=\int_{\overline \Omega} \chi_{\omo} y\, d\overline\nu\; \quad\text{for all } y \in H^1(\Omega).
    \end{equation}
    Then $p_\delta \ge 0$ and
    \begin{equation}
        \begin{aligned}
            a(y,p_\delta)&=a_\delta(y,p_\delta)+\int_{\Omega} \chi_{\Omega_\delta}cp_\delta \,dx=\int_{\overline \Omega} \chi_{\omo} y\, d\overline \nu+\int_{\Omega} \chi_{\Omega_\delta}cp_\delta \,dx\\
                         &=a(y,p)+\int_{\Omega} \chi_{\Omega_\delta}cp_\delta \,dx\\
                         &\ge a(y,p).
        \end{aligned}
    \end{equation}
    Hence, $p_\delta \ge p$, and thus $p_\delta(x) < \delta$ implies that $p(x) < \delta$ and thus $c_\delta(x) = 0$. Hence, \cref{lem:strictlypos} yields
    (after choosing $\delta \le \min\left\{\|c\|^{-1}_{L^\infty(\Omega)},\|r\|^{-1}_{L^\infty(\Omega)}\right\}$) that $p_\delta \ge \delta$. 

    Furthermore,
    \begin{equation}
        a(y,p-p_\delta)=\int_{\Omega} \chi_{\Omega_\delta} cp_\delta y \,dx,
    \end{equation}
    and for any $1\leq s<\infty$,
    \begin{equation}
        \|\chi_{\Omega_\delta} cp_\delta\|_{L^s(\Omega)} \le |{\Omega_\delta}|^{1/s}\|c\|_{L^\infty(\Omega)}\|p_\delta\|_{L^\infty(\Omega_\delta)},
    \end{equation}
    so that by \cite[Théorème 4.1]{Stampacchia:1965a}, there exists a $C>0$ such that for any $s>d$,
    \begin{equation}
        C \|p-p_\delta\|_{L^\infty(\Omega)} \le \|\chi_{\Omega_\delta} cp_\delta\|_{L^s(\Omega)} \le  |{\Omega_\delta}|^{1/s}\|c\|_{L^\infty(\Omega)}\|p_\delta\|_{L^\infty(\Omega_\delta)}. 
    \end{equation}
    Since $|\Omega_\delta|\to 0$ for $\delta \to 0$, 
    we can choose $\delta$ sufficiently small such that for adequately chosen $s\in(d,\infty)$, we have
    \begin{equation}
        C^{-1}\|c\|_{L^\infty(\Omega)} |{\Omega_\delta}|^{1/s} \le \frac14.
    \end{equation}
    Hence, we can estimate
    \begin{equation}
        \|p_\delta\|_{L^\infty(\Omega_\delta)} \le \|p\|_{L^\infty(\Omega_\delta)}+\|p-p_\delta\|_{L^\infty(\Omega)} \le \delta + \frac14\|p_\delta\|_{L^\infty(\Omega_\delta)},
    \end{equation}
    i.e., $\|p_\delta\|_{L^\infty(\Omega_\delta)} \le \frac{4}{3}\delta$. We conclude that $\|p-p_\delta\|_{L^\infty(\Omega)} \le \frac14 \frac43 \delta = \frac13\delta$, and therefore
    \begin{equation}
        p \ge p_\delta -\|p-p_\delta\|_{L^\infty(\Omega)} \ge \delta - \frac13 \delta > 0 
    \end{equation}
    as claimed.
\end{proof}

\subsection{Examples}\label{sec:existence:examples}

To illuminate our abstract framework further, let us discuss in the following a couple of examples. All of them have in common the generic definition of 
\begin{equation}
    A: W^{1,q'}(\Omega)\supset \dom A \to \M(\overline \Omega),
\end{equation}
where $q' \le 2$ is chosen appropriately as stated in the beginning of \cref{sec:state_equation}. 
However, the examples will cover different definitions of $E$ and $B$ and the corresponding spaces, i.e., different types of control and observation. 

\paragraph{Distributed control for a Neumann problem}\label{ex:dist_neumann}

As a first example, consider a homogeneous Neumann problem with distributed control (i.e., $r=0$ and $\omc = \overline \Omega$), such that 
\begin{equation}
    B = \mathrm{Id}: \M(\overline \Omega) \to \M(\overline \Omega)
\end{equation}
is the control operator with pre-adjoint $\pred{B} = \mathrm{Id}:C(\overline \Omega)\to C(\overline\Omega)$.  

Let us first consider boundary observation, i.e., $\omo = \partial \Omega$. We start with recalling that there exists a continuous trace operator
\begin{equation}
    \tau_{q'} : W^{1,q'}(\Omega) \to L^s(\partial \Omega)
\end{equation}
for suitably chosen $s$ depending on $q'$ and the spatial dimension $d$ of $\Omega$. In particular, for $q'=2$ we may always choose $s=2$. 
In the general case, we may define
\begin{equation}
    \dom E := \set{ y \in W^{1,q'}(\Omega) : \tau_{q'} y \in L^2(\partial \Omega)}
\end{equation}
(which implies $\dom E \supset H^1(\Omega)$ if $q' \le 2$), and then
\begin{equation}
    E : W^{1,q'}(\Omega) \supset \dom E \to L^2(\partial \Omega)
\end{equation}
as the restriction of $\tau_{q'}$ to $\dom E$. Since the norm of the co-domain space has been strengthened, $E$ is in general not continuous
anymore. It is, however, a closed operator: Assume that $y_n \to y$ in $W^{1,q'}(\Omega)$ and $E y_n \to h$ in $L^2(\partial \Omega)$. By
continuity of $\tau_{q'}$, we conclude that $Ey_n \to \tau_{q'} y$ in $L^s(\partial \Omega)$; but from $Ey_n \to h$ in $L^2(\partial \Omega)$
we deduce that $\tau_{q'} y=h\in L^2(\partial \Omega)$ and thus $y\in \dom E$ and $E y=\tau_{q'} y=h$. 

We summarize that $E$ satisfies all our assumptions, and note that for $d=2$ we may choose $q'$ sufficiently close to $2$ such that
$E:=\tau_{q'} : W^{1,q'}(\Omega) \to L^2(\partial \Omega)$ is well-defined as a continuous operator. However, the same is impossible
for $d=3$, so that we have to work with unbounded $E$ in this case. 

For the case of observation on the whole domain (i.e., $\omo=\Omega$) and $d\le 3$, we may simply define $E: W^{1,q'}(\Omega) \to L^2(\Omega)$ as the Sobolev embedding
which exists for suitably chosen $q'$. In the ``exotic'' case $d>3$, a similar effect as for boundary control with $d=3$ appears, and $E$ has to be defined
as an unbounded operator.

By \cref{pro:keyfulfilled} and by our assumption $r=0$, we see that we can choose $\omo \subset \overline \Omega$ arbitrarily as long as it has positive measure
with respect to the measure $d\overline \nu$ on $\overline \Omega$.

\paragraph{Robin or Neumann boundary control}\label{ex:bound}

In this case, our control operator is defined as the extension by zero 
\begin{equation}
    B : \M(\partial \Omega) \to \M(\overline \Omega),
\end{equation}
i.e., $\pred{B}:C(\overline\Omega)\to C(\partial\Omega)$ denotes the trace operator from $\overline\Omega$ to $\omc=\partial\Omega$. Again, we take $\pred{E}$ as the identity. 
To verify the pre-dual Slater condition, we then need to find $\pdS\in L^2(\Omega)$, such that the solution $p\in W^{1,q}(\Omega)$ of the problem
\begin{equation}
    a(y,p)= \dual{\pdS}{E y}_{L^2(\omo)} \quad\text{for all } y\in W^{1,q'}(\Omega)
\end{equation}
has a strictly positive boundary trace, i.e., $\pred{B}p \ge \varepsilon > 0$. According to \cref{pro:keyfulfilled} this can be achieved for Neumann boundary conditions
if $\omo$ is arbitrary (of non-zero measure), and for Robin boundary conditions if $\omo \supset \partial \Omega$. 

\paragraph{Distributed control for a Dirichlet problem}

We close this section with a simple example for which \cref{ass:key} is violated. 
Consider the problem
\begin{equation}\label{eq:counter}
    \left\{ 
        \begin{aligned}
            &\min J(y) := \|y-(1-x)\|^2_{L^2([0,1])} \quad\text{s.\,t.}\quad  u \ge 0,\\
            & -y''=u,\quad y(0) = y(1)=0.
        \end{aligned}
    \right.
\end{equation}
Due to the homogemous Dirichlet boundary conditions and by continuity, there cannot be any solutions of the predual problem which are larger than some $\varepsilon > 0$
on the whole domain, which coincides with the control domain. So \cref{ass:key} is clearly violated.

To show that also the conclusions of Theorem 3.3 do not hold, let us take for $n\ge 2$ the sequence of measures $u_n=n \delta_{1/n}$, 
which is contained in $\M([0,1])$ but unbounded. 
\begin{lemma}\label{lem:counter}
    The weak solution $y_n\in H^1_0(0,1)$ of $y'' =n \delta_{1/n}$ is given by
    \begin{equation}
        y_n =
        \begin{cases}
            (n-1) x & x\le 1/n,\\
            1-x &  x \ge 1/n.
        \end{cases}
    \end{equation}
\end{lemma}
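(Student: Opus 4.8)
The plan is to verify the proposed $y_n$ directly against the weak formulation of the state equation, rather than to integrate the ODE from scratch. Since we are in one spatial dimension, $H^1_0(0,1)\hookrightarrow C([0,1])$, so the point evaluation $v\mapsto v(1/n)$ is a well-defined bounded linear functional and the measure $n\delta_{1/n}$ acts on $H^1_0(0,1)$ via $\langle n\delta_{1/n},v\rangle = n\,v(1/n)$. Accordingly, $y_n\in H^1_0(0,1)$ is the weak solution associated with the control $u_n=n\delta_{1/n}$ in \eqref{eq:counter} precisely when
\begin{equation}
    \int_0^1 y_n' v'\,dx = n\,v(1/n)\qquad\text{for all } v\in H^1_0(0,1).
\end{equation}
(With the sign convention $-y''=u$ used in \eqref{eq:counter}, this corresponds to $-y_n''=n\delta_{1/n}$; the formula in the statement is to be read in this sense.)

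First I would check that the candidate lies in the correct space. The two affine pieces agree at the kink $x=1/n$, where both evaluate to $1-1/n$, so $y_n$ is continuous and piecewise affine and hence in $H^1(0,1)$; moreover $y_n(0)=0$ and $y_n(1)=1-1=0$, so $y_n\in H^1_0(0,1)$. Its weak derivative is the piecewise constant function $y_n' = (n-1)\chi_{(0,1/n)} - \chi_{(1/n,1)}$, which I would read off from the two slopes.

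The core of the argument is then a one-line computation: substituting this derivative and splitting the integral at $x=1/n$ gives
\begin{equation}
    \int_0^1 y_n' v'\,dx = (n-1)\bigl(v(1/n)-v(0)\bigr) - \bigl(v(1)-v(1/n)\bigr) = n\,v(1/n),
\end{equation}
where the boundary values $v(0)=v(1)=0$ are used in the last step. This establishes the weak formulation for every $v\in H^1_0(0,1)$, so $y_n$ is a weak solution.

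Finally, for uniqueness I would invoke that the bilinear form $(y,v)\mapsto\int_0^1 y'v'\,dx$ is bounded and, by the Poincar\'e inequality on $H^1_0(0,1)$, coercive, so that Lax--Milgram (equivalently, the continuous invertibility of $A$ established in \cref{sec:state_equation}) guarantees the weak solution is unique; hence $y_n$ is \emph{the} weak solution. I do not expect any genuine obstacle here: the only point requiring care is interpreting the Dirac right-hand side as a functional on $H^1_0$, which is unproblematic in one dimension precisely because of the embedding into continuous functions, and everything else is the elementary verification above.
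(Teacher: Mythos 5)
Your proposal is correct and follows essentially the same route as the paper: both reduce the problem to the weak formulation $\int_0^1 y_n'v'\,dx = n\,v(1/n)$, invoke Lax--Milgram for uniqueness, and perform the identical computation of splitting the integral at $x=1/n$ and using the fundamental theorem of calculus together with $v(0)=v(1)=0$. The only (immaterial) difference is that the paper derives the two slopes from an ansatz while you verify the stated formula directly.
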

\begin{proof}
    We have to find $y_n$ such that $\int_\Omega y_n' p' \,dx=n\,p(1/n)$ for all $p\in H^1_0((0,1))$ and $y_n(0)=y_n(1)=0$. 
    By the Lax--Milgram theorem, we know that this solution is unique; moreover, the special form of the right-hand side leads us to the ansatz $y'_n=\alpha$ on $[0,1/n]$ and $y'_n=\beta$ on $[1/n,1]$. 
    Using the homogenous boundary conditions, we find that $y_n=\alpha x$ on $[0,1/n]$ and $y_n=\beta(x-1)$ on $[1/n,1]$. 
    Since $y_n$ has to be continuous at $x=1/n$, we conclude that $\alpha\frac1n=\beta\frac1{n-1}$.

    Then, we can obtain using the weak formulation and the fundamental theorem of calculus that
    \begin{equation}
        \begin{aligned}
            \langle u_n,p\rangle_{\M,C} = n p(1/n)&=\int_0^{1/n}\alpha p'\,dx+\int_{1/n}^1 \beta p'\,dx\\
                                                  &= \alpha(p(1/n)-p(0))+\beta(p(1)-p(1/n))\\
                                                  &=(\alpha-\beta)p(1/n),  
        \end{aligned}
    \end{equation}
    which implies that $\alpha-\beta = n$. 
    Solving these two equations for $\alpha$ and $\beta$ yields our claim. 
\end{proof}

\begin{proposition}
    Problem~\eqref{eq:counter} does not possess an optimal solution in $\mathcal M([0,1])$. 
\end{proposition}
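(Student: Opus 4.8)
The plan is to show that the infimum in \eqref{eq:counter} equals $0$ but is not attained by any feasible measure, so that no optimal solution can exist. First I would use the measures $u_n = n\delta_{1/n}$, whose states $y_n$ were computed in \cref{lem:counter}, as an explicit minimizing sequence to identify the value of the infimum. Then I would argue that a hypothetical minimizer would force the optimal state to coincide almost everywhere with the target $1-x$, which is incompatible with the homogeneous Dirichlet boundary conditions encoded in the state equation.

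For the first step I would evaluate $J(y_n)$ directly from \cref{lem:counter}. Since $y_n$ agrees with the target on $[1/n,1]$ and equals $(n-1)x$ on $[0,1/n]$, the difference $y_n-(1-x)$ vanishes on $[1/n,1]$ and equals $nx-1$ on $[0,1/n]$, whence
\begin{equation}
    J(y_n)=\norm{y_n-(1-x)}_{L^2([0,1])}^2=\int_0^{1/n}(nx-1)^2\,dx=\frac{1}{3n},
\end{equation}
which tends to $0$ as $n\to\infty$. As each $u_n$ is non-negative and therefore feasible, and as $J\ge 0$, this already shows that the infimum in \eqref{eq:counter} equals $0$.

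To rule out attainment, I would argue by contradiction: suppose $\bar u\in\M([0,1])$ were an optimal solution with associated state $\bar y$. Optimality together with $\inf J=0$ gives $J(\bar y)=0$, hence $\bar y=1-x$ almost everywhere on $[0,1]$. On the other hand, $\bar y$ solves $-\bar y''=\bar u$ with homogeneous Dirichlet data; in one dimension the representation $\bar y(x)=\int_0^1 G(x,s)\,d\bar u(s)$ with the continuous Dirichlet Green's function $G$ (equivalently, the embedding $W^{1,q'}(0,1)\hookrightarrow C([0,1])$) shows that $\bar y$ is continuous up to the boundary and satisfies $\bar y(0)=0$. Since the continuous representative of $1-x$ equals $1$ at $x=0$, this contradicts $\bar y=1-x$ a.e., and no optimal solution can exist.

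The main obstacle is the final step, namely the rigorous justification that a hypothetical optimal state retains the homogeneous boundary condition $\bar y(0)=0$ in the extended, measure-valued setting, rather than merely lying in $L^2([0,1])$. This is exactly where the violation of \cref{ass:key} makes itself felt: continuity of the state up to the boundary together with the vanishing trace is precisely what prevents the state from ever reaching a target that does not vanish at the endpoints, even though it can be approximated arbitrarily well.
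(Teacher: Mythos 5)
Your proposal is correct and follows essentially the same route as the paper: the same minimizing sequence $u_n = n\delta_{1/n}$ from \cref{lem:counter} shows the infimum is $0$, and attainment fails because the only candidate $y=1-x$ violates the homogeneous Dirichlet conditions. Your version is slightly more explicit — you compute $J(y_n)=\tfrac{1}{3n}$ and justify via the continuity of the state (Green's function representation in one dimension) that a hypothetical optimal state must satisfy $\bar y(0)=0$, a point the paper leaves implicit — but this is a refinement of the same argument, not a different one.
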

\begin{proof}
    From \cref{lem:counter} we conclude that $y_n \to 1-x$ in $L^2((0,1))$. 
    Hence, $\{(y_n,u_n)\}_{n\in\N}$ is a minimizing sequence, since each pair is feasible and $J(y_n)\to 0\leq J(y)$ for all $y$. 
    However, the limit $J=0$ cannot be attained, because the only possible candidate $y(x)=1-x$ does not satisfy the boundary conditions. 
\end{proof}

If we instead consider
\begin{equation}
    \left\{ \begin{aligned}
            &\min \|y-(1-x)\|^2_{L^2([\delta,1-\delta])} \quad\text{s.\,t.}\quad  u \ge 0,\\
            & -y''=u,\quad y(0) = y(1)=0,
    \end{aligned}\right.
\end{equation}
for some $\delta> 0$, then the control domain $[\delta,1-\delta]$ is a compact subset of $(0,1)$. So by \cref{lem:Harnack} we can verify \cref{ass:key} and
thus apply \cref{thm:existence} to assert existence of an optimal control in $\mathcal M([0,1])$. This reasoning works in general for distributed 
control on a compact subset $\omc$ of the domain $\Omega$.

\section{Optimality conditions}\label{sec:optimality}

We apply Fenchel duality to derive optimality conditions for minimizers of \eqref{eq:prob_reduced}. 
For the reader's convenience, we recall duality theory, e.g., from \cite[Chapter II.4]{Ekeland:1999a}. For a functional $\calF:W\to\Rbar:=\R\cup\{\infty\}$ defined on a Banach space $W$, let $\calF^*:W^*\to\Rbar$ denote the Fenchel conjugate of $\calF$ given for $w^*\in W^*$ by
\begin{equation}
    \calF^*(w^*) = \sup_{w\in W}\ \dual{w^*}{w}_{W^*,W} - \calF(w).
\end{equation}
Furthermore, let
\begin{equation}
    \partial\calF(w) := \set{w^*\in W^*: \dual{w^*}{\tilde w-w}_{W^*,W}\leq \calF(\tilde w)-\calF(w)\text{ for all }\tilde w\in W}
\end{equation}
denote the subdifferential of the convex function $\calF$ at $w$, which reduces to the G\^ateaux-derivative $\calF'(w)$ if it exists.
These definitions immediately yield the Fenchel--Young inequality
\begin{equation}\label{eq:fenchel_young}
    \dual{w^*}{w}_{W^*,W}\leq \calF^*(w^*)+\calF(w) \qquad\text{ for all }w\in W, w^*\in W^*,
\end{equation}
where equality holds if and only if $w^*\in\partial\calF(w)$.

The Fenchel duality theorem states that if $\calF:W\to\Rbar$ and $\calG:Z\to\Rbar$ are proper, convex, and lower semicontinuous functionals on the Banach spaces $X$ and $Z$, $\Lambda:W\to Z$ is a continuous linear operator, and there exists a $w_0\in W$ such that $\calF(w_0)<\infty$, $\calG(\Lambda w_0)<\infty$, and $\calG$ is continuous at $\Lambda w_0$ (a generalized Slater condition), then
\begin{equation}
    \label{eq:fenchel_duality}
    \inf_{w\in W} \calF(w)+\calG(\Lambda w) = \sup_{z^*\in Z^*}-\calF^*(\Lambda^* z^*) - \calG^*(-z^*),
\end{equation}
and the right-hand side of \eqref{eq:fenchel_duality} -- the \emph{dual problem} -- has at least one solution. Furthermore, the equality in \eqref{eq:fenchel_duality} is attained at $(\bar w, \bar z^*)\in W\times Z^*$ if and only if
\begin{equation}
    \label{eq:formal_opt}
    \left\{\begin{aligned}
            \Lambda^* \bar z^* &\in\partial\calF(\bar w),\\
            - \bar z^* &\in\partial\calG(\Lambda \bar w),
    \end{aligned}\right.
\end{equation}
holds; see, e.g., \cite[Remark III.4.2]{Ekeland:1999a}.

We wish to apply the Fenchel duality theorem to \eqref{eq:prob_reduced}, where
$\Lambda$ would take the role of the control-to-observation mapping $S$. Since $\M$
is non-reflexive, the dual problem would be posed in $\M^*$, which is difficult
to characterize. We therefore follow a \emph{pre-dual} approach as in \cite{Clason:2010a,Clason:2011a}, where we introduce the optimization problem
\begin{equation}\label{eq:predualpr}
    \inf_{h\in \dom \pred{S}} \frac12\norm{h+y_d}_{L^2(\omo)}^2-\frac12 \norm{y_d}_{L^2(\omo)}^2 + \delta_{C(\omc)^+}(\pred{S} h)
\tag{$\pred{}$P}
\end{equation}
(obtained by formal application of Fenchel duality) and show that its Fenchel dual coincides with problem \eqref{eq:prob_reduced}. 
\begin{remark}
    Before delving into a deeper analysis, let us point out that the pre-dual
    problem \eqref{eq:predualpr} is essentially a state-constrained optimal control problem with control
    $h \in \dom \pred{S} \subset L^2(\omo)$ and state $p:=\pred{S}h \in C(\overline \Omega)$, i.e.,
    \begin{equation}\label{eq:dualsc}
        \inf_{h\in {\dom \pred{S}}} \frac12\norm{h+y_d}_{L^2(\omo)}^2-\frac12 \norm{y_d}_{L^2(\omo)}^2 
        \quad\text{s.\,t.}\quad  \Aad p={\pred{E}}h, \quad  \pred{B} p \ge 0, \text{ on } \omc.   
    \end{equation}
    However, it has the slightly unusual characteristics that the state does
    not appear in the objective and that the inequality constraint is imposed on a subdomain.

    A further complication arises if $\dom \pred{S}$ is a proper subset of $L^2(\omo)$. 
    This case corresponds to a state-constrained problem where the control-to-state mapping does not map into the space of continuous functions. Such problems have been analysed in
    \cite{Schiela:2009}. The analysis performed in this section may offer an alternative approach to this class of problems.
\end{remark}

Problem \eqref{eq:predualpr} is strictly convex and admits a feasible point by \cref{ass:key} and thus is non-trivial, i.e., admits a finite infimum. If $\dom \pred{S}$ is not closed, we cannot expect \eqref{eq:predualpr} to
have a minimizer. However, any minimizing sequence is bounded in $L^2(\omo)$ and thus has a weak cluster point $\bar h \in L^2(\omo)$. 
In fact, by strict convexity of the term $\norm{h+y_d}_{L^2(\omo)}^2$, any minimizing sequence converges even strongly to the unique limit $\bar h$. 
While $\bar h$ is possibly not contained in $\dom \pred{S}$ -- and hence $\pred{S}\bar h$ is not defined -- we can express the limit using a suitable extension of $\pred{S}$ which we will define below. 

Although the Fenchel duality theorem is not directly applicable since $\pred{S}$ may be an unbounded operator, a modification of the arguments in \cite{Ekeland:1999a} shows that the statement still holds. In our argumentation, we can make use of the fact that we have already established existence of solutions of the dual problem in \cref{thm:existence}. For the sake of completeness, we give here the full proof, where we closely follow \cite[Chapter II.4]{Ekeland:1999a}.
Let us define for problem \eqref{eq:predualpr} the perturbation function $\Phi:L^2(\omo)\times C(\omc)\to\overline\R$ by 
\begin{equation}
    \Phi(h,v) := \frac12\norm{h+y_d}_{L^2(\omo)}^2-\frac12 \norm{y_d}_{L^2(\omo)}^2 + \delta_{C(\omc)^+}(\pred{S}h-v)+\delta_{\dom \pred{S}}(h).
\end{equation}
Clearly, $\Phi(h,v)$ is convex but -- by the last term -- not lower semicontinuous with respect to $h$ unless $\dom \pred{S}=L^2(\omo)$. 
Furthermore, $\inf_h\Phi(h,0)$ coincides with \eqref{eq:predualpr} and hence is finite. 

Consider now the Fenchel conjugate $\Phi^*:L^2(\omo)\times \M(\omc)\to\overline\R$ of $\Phi$ with respect to $(h,v)$. 
\begin{lemma}
    The dual problem
    \begin{equation}\label{eq:dualpr}
        \sup_{v^*\in\M(\omc)} -\Phi^*(0,v^*)
    \end{equation}
    coincides with problem \eqref{eq:prob_reduced}. Furthermore, if \cref{ass:key} is satisfied, the supremum is attained at $\bar v^* = \bar u$. 
\end{lemma}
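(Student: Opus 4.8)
The plan is to compute the conjugate $\Phi^*(0,v^*)$ explicitly from its definition and to recognize the result as (the negative of) the objective of \eqref{eq:prob_reduced}, with the maximizer being the primal minimizer $\bar u$ from \cref{thm:existence}. By definition of the Fenchel conjugate with respect to $(h,v)$,
\[
    \Phi^*(0,v^*)=\sup_{h\in\dom\pred{S},\,v\in C(\omc)} \dual{v^*}{v}_{\M,C}-\half\norm{h+y_d}_{L^2(\omo)}^2+\half\norm{y_d}_{L^2(\omo)}^2-\delta_{C(\omc)^+}(\pred{S}h-v).
\]
Since the indicator forces $\pred{S}h-v\ge 0$ (and since $h\in\dom\pred S$ guarantees $\pred S h$ is defined), I would substitute $v=\pred{S}h-w$ with $w\in C(\omc)$, $w\ge 0$, using $\dual{v^*}{v}_{\M,C}=\dual{v^*}{\pred{S}h}_{\M,C}-\dual{v^*}{w}_{\M,C}$. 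This decouples the supremum into an independent part over $w\ge 0$ and a part over $h\in\dom\pred{S}$.

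The supremum of $-\dual{v^*}{w}_{\M,C}$ over $w\ge 0$ equals $0$ (attained at $w=0$) precisely when $v^*\in\M(\omc)^+$, by the very definition of the positive cone, and equals $+\infty$ otherwise (scale any $w_0\ge 0$ with $\dual{v^*}{w_0}_{\M,C}<0$); this contributes the term $\delta_{\M(\omc)^+}(v^*)$. For the remaining supremum over $h$, I would split according to whether $v^*\in\dom S$. If $v^*\in\dom S$, then $\dual{v^*}{\pred{S}h}_{\M,C}=\dual{Sv^*}{h}_{L^2(\omo)}$ by the adjoint relation recorded after \cref{lem:propS}, and completing the square reduces the $h$-dependent part to $\dual{Sv^*-y_d}{h}_{L^2(\omo)}-\half\norm{h}_{L^2(\omo)}^2$. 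Because $\dom\pred{S}$ is dense in $L^2(\omo)$ and this functional is continuous in $h$, its supremum over $\dom\pred{S}$ coincides with the supremum over all of $L^2(\omo)$, attained at $h=Sv^*-y_d$ with value $\half\norm{Sv^*-y_d}_{L^2(\omo)}^2$.

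If instead $v^*\notin\dom S$, I would invoke the remark following \cref{lem:propS}, which provides a bounded sequence $h_n\in\dom\pred{S}$ with $\dual{v^*}{\pred{S}h_n}_{\M,C}\to\infty$; testing with $t h_n$ and maximizing the resulting quadratic in $t$ shows the supremum is $+\infty$. Thus the restriction to $\dom S$ is enforced automatically, and $\Phi^*(0,v^*)=\half\norm{Sv^*-y_d}_{L^2(\omo)}^2+\delta_{\M(\omc)^+}(v^*)$. Consequently $-\Phi^*(0,v^*)$ is exactly the negative of the objective of \eqref{eq:prob_reduced} evaluated at $v^*$, so maximizing $-\Phi^*(0,\cdot)$ is the same problem as \eqref{eq:prob_reduced}. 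Under \cref{ass:key}, \cref{thm:existence} furnishes a minimizer $\bar u$ of \eqref{eq:prob_reduced}, which is therefore the maximizer of \eqref{eq:dualpr}, i.e. the supremum is attained at $\bar v^*=\bar u$. I expect the main obstacle to be precisely the unbounded, densely defined character of $\pred{S}$: the two complementary uses of density of $\dom\pred{S}$ (to extend the $h$-supremum to all of $L^2(\omo)$) and of the characterization of $\dom S$ (to force the value $+\infty$ off the domain) are what replace the continuity of $\Lambda$ assumed in the classical Fenchel duality theorem.
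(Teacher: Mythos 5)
Your proposal is correct and follows essentially the same route as the paper: introduce the slack variable $w=\pred{S}h-v\ge 0$, split the supremum into the $w$-part (yielding $\delta_{\M(\omc)^+}(v^*)$), handle $v^*\notin\dom S$ via the unbounded sequence $h_n$, and use the adjoint relation plus density of $\dom\pred{S}$ to evaluate the $h$-part as $\half\norm{Sv^*-y_d}_{L^2(\omo)}^2$, then invoke \cref{thm:existence} for attainment. If anything, your observation that the $h$-supremum over the dense subset $\dom\pred{S}$ equals the supremum over all of $L^2(\omo)$ (rather than being attained at $h=Sv^*-y_d$, which need not lie in $\dom\pred{S}$) is slightly more careful than the paper's phrasing.
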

\begin{proof}
    By definition, the Fenchel conjugate at $h^*=0$ is given by
    \begin{equation}
        \begin{aligned}[b]
            \Phi^*(0,v^*) &= \sup_{h\in \dom\pred{S},v\in C(\omc)}\  \dual{v^*}{v}_{\M,C} - \Phi(h,v) \\
                          &= \sup_{S^*h-v \in C(\omc)^+} \left(\dual{v^*}{v}_{\M,C} - \frac12\norm{h+y_d}_{L^2(\omo)}^2\right)+\frac12 \norm{y_d}_{L^2(\omo)}^2.
        \end{aligned}
    \end{equation}
    Using that $\dom \pred{S}$ is dense in $L^2(\omo)$ and introducing for $h\in\dom\pred{S}$ the function $p:= \pred{S}h - v\in C(\omc)$ then yields for the case that 
    $v^*\in \dom S$:
    \begin{equation}
        \begin{aligned}[b]
            \Phi^*(0,v^*) &= \sup_{h\in \dom\pred{S}, p \in C(\omc)^+} \left(\dual{v^*}{\pred{S}h-p}_{\M,C}  - \frac12\norm{h+y_d}^2_{L^2(\omo)}\right)+\frac12 \norm{y_d}_{L^2(\omo)}^2\\
                          &= \sup_{h\in \dom\pred{S}, p \in C(\omc)^+} \left(\dual{Sv^*}{h}_{L^2(\omo)}-\dual{v^*}{p}_{\M,C}  - \frac12\norm{h+y_d}^2_{L^2(\omo)}\right)+\frac12 \norm{y_d}_{L^2(\omo)}^2\\
                          &= \sup_{h\in \dom\pred{S}, p \in C(\omc)^+} \left(-\dual{v^*}{p}_{\M,C} -  \frac12\norm{h}_{L^2(\omo)}^2+\dual{h}{Sv^*-y_d}_{L^2(\omo)}\right).
        \end{aligned}
    \end{equation}
    If, in contrast, $v^*\notin\dom S$, there exists a sequence $\{h_n\}_{n\in\N}\subset\dom\pred{S}$, bounded in $L_2(\omo)$, such that $\dual{v^*}{\pred{S}h_n}_{\M,C}\to \infty$. Hence the first term in the first line is unbounded, while the opthers are bounded, and thus $\Phi^*(0,v^*)=\infty$. 
    We therefore assume that $v^*\in\dom S$ and maximize separately with respect to $p$ and $h$. Considering the first term, we have that $\dual{v^*}{p}_{\M,C} < 0$ for some $p\ge 0$ implies that $\Phi^*(0,v^*)=\infty$. Otherwise, the supremum is attained at $p = 0$ and is $0$. For the second term, we use that the functional is differentiable with respect to $h$ to deduce that the supremum is attained at $h=Sv^*-y_d$. Together, we obtain
    \begin{equation}
        \Phi^*(0,v^*) = \frac12 \norm{S v^*-y_d}_{L^2(\omo)}^2+\delta_{\M(\omc)^+}(v^*)+\delta_{\dom S}(v^*).
    \end{equation}

    Writing $u:=v^*$, we see that the dual problem \eqref{eq:dualpr} is precisely our original problem \eqref{eq:prob_reduced}, which by \cref{thm:existence} has a solution $\bar u\in\dom S\subset\M(\omc)$. 
\end{proof}

To derive optimality conditions, we first show that the duality gap between \eqref{eq:predualpr} and \eqref{eq:prob_reduced} is zero.
\begin{proposition}\label{pro:nogap}
    We have that
    \begin{equation}\label{eq:nogap}
        \inf_{h\in L^2(\omo)} \Phi(h,0) =  \sup_{v^*\in \M(\omc)} -\Phi^*(0,v^*).
    \end{equation} 
\end{proposition}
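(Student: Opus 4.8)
The plan is to establish the no-gap identity \eqref{eq:nogap} by proving the two inequalities separately. The inequality $\inf_h \Phi(h,0) \ge \sup_{v^*} -\Phi^*(0,v^*)$ is the easy, ``weak duality'' direction: it follows directly from the Fenchel--Young inequality \eqref{eq:fenchel_young} applied to $\Phi$ and its conjugate $\Phi^*$. Indeed, for any $h$ and any $v^*$ we have $\dual{v^*}{0}_{\M,C} \le \Phi(h,0) + \Phi^*(0,v^*)$, i.e.\ $-\Phi^*(0,v^*) \le \Phi(h,0)$; taking the infimum over $h$ on the right and the supremum over $v^*$ on the left yields the claim. This holds with no regularity or Slater-type assumptions whatsoever.

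The reverse inequality is the substantive part. The standard route (as in \cite[Chapter~II.4]{Ekeland:1999a}) is to consider the value function $\beta(v) := \inf_h \Phi(h,v)$ and to show that $\beta$ is convex and subdifferentiable at $v=0$; a subgradient of $\beta$ at $0$ then furnishes a dual solution with no gap, since $-\Phi^*(0,v^*) = \beta^{**}(0)$ for $-v^*\in\partial\beta(0)$, and subdifferentiability gives $\beta(0) = \beta^{**}(0)$. Convexity of $\beta$ is routine from the joint convexity of $\Phi$. The obstacle is \emph{lower semicontinuity} of $\beta$ at $0$, which normally comes from continuity of $\calG$ at an interior feasible point (the generalized Slater condition in \eqref{eq:fenchel_duality}); here the indicator term $\delta_{\dom\pred S}(h)$ destroys the lower semicontinuity of $\Phi$ in $h$ whenever $\dom\pred S$ is not closed, so the classical theorem does not apply directly.

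My plan to circumvent this is to exploit that we have \emph{already} produced a dual solution: by the preceding lemma, $\sup_{v^*} -\Phi^*(0,v^*)$ equals the value of \eqref{eq:prob_reduced}, and by \cref{thm:existence} this supremum is attained at $\bar u = \bar v^*\in\dom S$. The idea is therefore to construct a recovery sequence for the primal problem directly and match the two values. Concretely, I would take a minimizing sequence $\{h_n\}\subset\dom\pred S$ for \eqref{eq:predualpr}; by strict convexity of $\tfrac12\norm{h+y_d}^2_{L^2(\omo)}$ it converges strongly in $L^2(\omo)$ to a unique limit $\bar h$, as already noted in the text. One then pairs $\bar u$ against the constraint: writing out the optimality relations \eqref{eq:formal_opt} that characterize $(\bar h,\bar u)$, namely $S\bar u - y_d = \bar h$ (stationarity in $h$) together with $\pred S h_n \ge 0$ and the complementarity $\dual{\bar u}{\pred S \bar h}_{\M,C}=0$ (from $-\bar u\in\partial\delta_{C(\omc)^+}(\pred S\bar h)$), I would compute $\Phi(h_n,0)$ and show it converges to $-\Phi^*(0,\bar u)$. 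The key manipulation is the duality pairing $\dual{\bar u}{\pred S h_n}_{\M,C} = \dual{S\bar u}{h_n}_{L^2(\omo)}$, valid since $\bar u\in\dom S$ and $h_n\in\dom\pred S$, which lets me pass from the possibly ill-defined $\pred S\bar h$ to the well-defined $S\bar u$.

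The step I expect to be the main obstacle is precisely handling the case $\bar h\notin\dom\pred S$, where $\pred S\bar h$ is undefined and the feasibility constraint $\pred S\bar h \ge 0$ cannot be read off literally. The resolution must be that the relevant pairing $\dual{\bar u}{\pred S h_n}_{\M,C} = \dual{S\bar u}{h_n}_{L^2(\omo)}$ passes to the limit using only strong $L^2$-convergence of $h_n$ and the fixed element $S\bar u\in L^2(\omo)$, so that $\dual{\bar u}{\pred S h_n}_{\M,C}\to \dual{S\bar u}{\bar h}_{L^2(\omo)}$ \emph{without} ever needing $\pred S\bar h$ itself. In other words, the complementarity relation survives in the limit in its ``$S$-side'' formulation even though the ``$\pred S$-side'' formulation degenerates. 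Making this passage rigorous --- tracking which terms in $\Phi(h_n,0)$ are continuous under strong $L^2$-convergence and confirming that the indicator terms $\delta_{C(\omc)^+}(\pred S h_n)$ and $\delta_{\dom\pred S}(h_n)$ vanish along the whole sequence --- is the technical heart of the argument, and it is exactly here that the unboundedness of $\pred S$ (and the non-closedness of its domain) is accommodated.
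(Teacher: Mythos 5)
Your first paragraph (weak duality via Fenchel--Young) is fine, but the argument you propose for the reverse inequality is circular. The relations you want to ``write out'' for the pair $(\bar h,\bar u)$ --- the stationarity $\bar h = S\bar u - y_d$ and the complementarity coming from $-\bar u\in\partial\delta_{C(\omc)^+}(\pred{S}\bar h)$ --- are exactly the extremality relations \eqref{eq:formal_opt}, and the paper states explicitly that these hold \emph{if and only if} equality in \eqref{eq:fenchel_duality} is attained at that pair. In this paper they are derived (in \cref{pro:opt_lim}) \emph{from} \cref{pro:nogap}, via $\lim_n\Phi(h_n,0)+\Phi^*(0,\bar u)=0$. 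Knowing separately that $\{h_n\}$ minimizes the pre-dual problem and that $\bar u$ minimizes the dual gives you no a priori link between $\bar h$ and $S\bar u - y_d$; establishing that link is precisely the content of the no-gap statement you are trying to prove. The identity $\dual{\bar u}{\pred{S}h_n}_{\M,C}=\dual{S\bar u}{h_n}_{L^2(\omo)}$ and the limit passage you describe are correct and are indeed used in the paper --- but in the proof of \cref{pro:opt_lim}, not here. A further symptom of the gap: your argument for the hard direction never uses the strict positivity in \cref{ass:key}, yet strong duality for this problem genuinely requires a constraint qualification.

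The paper's route is different and avoids the circularity: it invokes the normality criterion of Ekeland--Temam (Proposition III.2.1), i.e., it shows that the value function $v\mapsto\inf_h\Phi(h,v)$ is lower semicontinuous at $0$. This is where \cref{ass:key} enters: if $h_v$ is feasible for the $v$-perturbed constraint $\pred{S}h - v\in C(\omc)^+$ and $\|v\|_{C(\omc)}$ is small, then $h_0 := h_v + \eps\pdS$ (with $\pdS$ the Slater element, so $\pred{S}\pdS\ge\eps_0>0$) is feasible for the unperturbed constraint, and the quadratic objective changes only by $\tau(\eps)\to 0$. Taking infima gives $\inf_h\Phi(h,0)\le\inf_h\Phi(h,v)+\tau$, hence lower semicontinuity, hence no gap. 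Note that this works entirely with elements of $\dom\pred{S}$ (the shift $h_v\mapsto h_v+\eps\pdS$ stays in the linear subspace $\dom\pred{S}$), so the non-closedness of $\dom\pred{S}$ --- which you correctly identify as the obstacle to the textbook Fenchel theorem --- causes no trouble at this stage; it only becomes an issue later, when passing to the limit in the optimality conditions. If you replace your recovery-sequence argument by this normality argument, the proof goes through.
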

\begin{proof}
    The claim follows from \cite[Proposition III.2.1]{Ekeland:1999a} if Problem \eqref{eq:predualpr} is normal, i.e., the mapping $v\mapsto \inf_h \Phi(h,v)$ is lower semicontinuous at $0$. To verify this, it suffices to show that for each feasible point $h_v\in\dom\Phi(h,v)$, we can find a nearby feasible point $h_0\in\dom\Phi(h,0)$ with $\Phi(h_v,v)$ close to $\Phi(h_0,0)$. This can be achieved by adding a small multiple of the function $\pdS$ from \cref{ass:key}, since $\pred{S}\pdS$ is strictly positive and the perturbations are measured in the $C(\omc)$-norm. 

    Thus, for given $\varepsilon>0$  we can find $\delta>0$ such that with $\|v\|_{L^\infty(\omo)} < \delta$, $h_0:=h_v+\varepsilon h$ is feasible
    for the original problem, as long as $h_v$ is feasible for the perturbed problem. Moreover, it is easy to see that 
    $\Phi(h_0,0)-\Phi(h_v,v) \le \tau(\varepsilon)$ with $\tau \to 0$ as $\varepsilon \to 0$. Taking infima, this implies that
    \begin{equation}
        \inf_h \Phi(h,0) \le \inf_h \Phi(h,v)+\tau, 
    \end{equation}
    which in turn yields the desired lower semicontinuity and thus \eqref{eq:nogap}.
\end{proof}

To derive optimality conditions from the equality \eqref{eq:nogap}, we continue as in \cite[\S{}\,III, equation (4.22)]{Ekeland:1999a}. We first derive a limiting form of the optimality conditions.
\begin{proposition}\label{pro:opt_lim}
    Let $\{h_n\}_{n\in\N}\subset \dom \pred{S}\subset L^2(\omo)$ be a minimizing sequence for Problem \eqref{eq:predualpr} with $h_n\to \bar h\in L^2(\omo)$, and let $\bar u\in\M(\omc)$ be the solution to Problem \eqref{eq:dualpr}. 
    Then,
    \begin{equation}
        \label{eq:opt_lim}
        \left\{
            \begin{aligned}
                \bar h &= S\bar u -y_d,\\
                \pred{S}h_n &\geq 0,\quad \bar u \geq 0,\quad \lim_{n\to \infty}\dual{\bar u}{\pred{S} h_n}_{\M,C} = 0.
            \end{aligned}
        \right.
    \end{equation}
\end{proposition}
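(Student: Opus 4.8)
The plan is to extract all four relations from a single exact identity for the duality-gap quantity $Q_n := \Phi(h_n,0)+\Phi^*(0,\bar u)$, together with the fact that $Q_n\to 0$. I would first dispose of the two sign conditions, which are pure feasibility statements: since $\{h_n\}$ is a minimizing---hence feasible---sequence for \eqref{eq:predualpr}, the indicator $\delta_{C(\omc)^+}(\pred{S}h_n)$ is finite, i.e.\ $\pred{S}h_n\ge 0$; and since $\bar u$ solves \eqref{eq:dualpr}, which the preceding lemma identifies with \eqref{eq:prob_reduced}, it lies in $\M(\omc)^+$, i.e.\ $\bar u\ge 0$. Crucially, $\bar u\in\dom S$ (by \cref{lem:propS}) and $h_n\in\dom\pred{S}$, so the adjoint relation $\dual{\bar u}{\pred{S}h_n}_{\M,C}=\dual{S\bar u}{h_n}_{L^2(\omo)}$ is available, and moreover $\dual{\bar u}{\pred{S}h_n}_{\M,C}\ge 0$ as the pairing of a non-negative measure with a non-negative continuous function.

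Next I would establish $Q_n\to 0$. Since $\{h_n\}$ is minimizing for \eqref{eq:predualpr}, $\Phi(h_n,0)\to\inf_{h}\Phi(h,0)$; since $\bar u$ attains the supremum in \eqref{eq:dualpr}, $-\Phi^*(0,\bar u)=\sup_{v^*\in\M(\omc)}\,(-\Phi^*(0,v^*))$; and \cref{pro:nogap} asserts that these two values coincide. Adding the two convergences yields $Q_n\to 0$.

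The heart of the argument is the exact identity
\[
    \half\norm{h_n-(S\bar u-y_d)}_{L^2(\omo)}^2 + \dual{\bar u}{\pred{S}h_n}_{\M,C} = \Phi(h_n,0)+\Phi^*(0,\bar u),
\]
which I would verify by inserting $\Phi(h_n,0)=\half\norm{h_n+y_d}_{L^2(\omo)}^2-\half\norm{y_d}_{L^2(\omo)}^2$ (feasibility annihilates both indicators) and $\Phi^*(0,\bar u)=\half\norm{S\bar u-y_d}_{L^2(\omo)}^2$ (from the preceding lemma, using $\bar u\in\dom S\cap\M(\omc)^+$), expanding the square, and cancelling the cross terms via the adjoint relation above. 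Both summands on the left are non-negative while their sum tends to $0$, so each tends to $0$ separately: the norm term forces $h_n\to S\bar u-y_d$ strongly in $L^2(\omo)$, and since $h_n\to\bar h$ this gives $\bar h=S\bar u-y_d$; the pairing term gives $\lim_{n\to\infty}\dual{\bar u}{\pred{S}h_n}_{\M,C}=0$. Together with the two sign conditions this is exactly \eqref{eq:opt_lim}.

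The main obstacle is conceptual rather than computational. Because $\Phi$ fails to be lower semicontinuous in $h$ (owing to the term $\delta_{\dom\pred{S}}$ when $\dom\pred{S}\subsetneq L^2(\omo)$), one cannot invoke the clean subgradient condition $(0,\bar u)\in\partial\Phi(\bar h,0)$ and is forced to argue along the minimizing sequence, which yields only the limiting form \eqref{eq:opt_lim} rather than a pointwise complementarity $\dual{\bar u}{\pred{S}\bar h}_{\M,C}=0$. The decisive insight is recognizing that the gap $Q_n$ splits \emph{exactly} into the squared $L^2$-distance and the complementary-slackness pairing; once that identity is written down, the conclusion is immediate.
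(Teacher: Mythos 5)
Your proof is correct and follows essentially the same route as the paper: both split the vanishing duality gap $\Phi(h_n,0)+\Phi^*(0,\bar u)$ into a non-negative Fenchel--Young residual for the quadratic term plus the non-negative complementarity pairing, and conclude that each piece vanishes separately. The only cosmetic difference is that you write the Fenchel--Young residual explicitly as $\tfrac12\norm{h_n-(S\bar u-y_d)}_{L^2(\omo)}^2$ and work at the level of $h_n$ throughout, whereas the paper first passes to the limit $\bar h$ in the quadratic term and then invokes the abstract Fenchel--Young inequality.
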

\begin{proof}
    By definition of $\Phi^*$, \cref{pro:nogap} implies that if $\{h_n\}_{n\in\N}$ is a minimizing sequence of $\Phi(\cdot,0)$ and $\bar u$ is a minimizer of $\Phi^*(0,\cdot)$, we have 
    \begin{equation}
        \lim_{n\to \infty}\Phi(h_n,0)+\Phi^*(0,\bar u) = 0. 
    \end{equation}
    We now use continuity of $\|\cdot\|_{L^2(\omo)}$ with respect to $h_n \to \bar h$ (recall that this limit exists due to the strict convexity of the first term in \eqref{eq:predualpr}), which yields
    \begin{equation}
        \begin{aligned}
            0 &=\lim_{n\to \infty} \Phi(h_n,0)+\Phi^*(0,\bar u)\\
              &= \frac12\norm{\bar h+y_d}_{L^2(\omo)}^2-\frac12 \norm{y_d}_{L^2(\omo)}^2 + \lim_{n\to \infty}\delta_{C(\omc)^+}(S^* h_n)\\
            \MoveEqLeft[-1]+\frac12 \|S \bar u-y_d\|^2_{L^2(\omo)}+\delta_{\M(\omc)^+}(\bar u).
        \end{aligned}
    \end{equation}
    Next, we observe that, since $\bar u \in \dom S$ and thus $S\bar u \in L^2(\omo)^*$, we have the convergence 
    \begin{equation}
        \lim_{n \to \infty} \langle \bar u, \pred{S}h_n\rangle_{\M,C} = \lim_{n \to \infty}\langle S\bar u,h_n\rangle_{L^2(\omo)} = \langle S\bar u,\bar h\rangle_{L^2(\omo)}.
    \end{equation}
    Hence, continuing our last computation, we obtain
    \begin{equation}
        \begin{aligned}
            0 &= \left[\frac12\norm{\bar h+y_d}_{L^2(\omo)}^2-\frac12 \norm{y_d}_{L^2(\omo)}^2+\frac12 \|S \bar u-y_d\|^2_{L^2(\omo)}-\dual{S\bar u}{\bar h}_{L^2(\omo)}\right]\\
            \MoveEqLeft[-1] +\Big[\lim_{n\to \infty}\delta_{C(\omc)^+}(S^* h_n)+\delta_{\M(\omc)^+}(\bar u)+\lim_{n \to \infty}\dual{\bar u}{\pred{S} h_n}_{\M,C}\Big].
        \end{aligned}
    \end{equation}
    We now argue that both brackets are non-negative. For the first bracket, we use the fact that the third term is the Fenchel conjugate of the sum of the first two terms to apply the Fenchel--Young inequality \eqref{eq:fenchel_young}. For the second bracket, feasibility of elements of a minimizing sequence (after passing to a subsequence if necessary) implies that $\pred{S}h_n \geq 0$ and $\bar u \geq 0$ and hence that the first two terms vanish. By definition of non-negativity of measures, positivity of $\bar u$ and $\pred{S}h_n$ implies 
    that $\dual{\bar u}{\pred{S} h_n}_{\M,C}\geq 0$ for all $n\in \N$ and hence that the third term is non-negative as well.
    Therefore, each bracket has to vanish separately. The first one immediately yields equality in \eqref{eq:fenchel_young} and hence that
    \begin{equation}
        \bar h \in \partial\left( \tfrac12 \|\cdot -y_d\|^2_{L^2(\omo)}\right)(S \bar u)= \{S \bar u-y_d\},
    \end{equation}
    i.e., the first relation of \eqref{eq:opt_lim}. From the second bracket, we directly obtain the remaining relations (i.e., the second line) of \eqref{eq:opt_lim}. 
\end{proof}

We now wish to pass to the limit $n\to\infty$ in \eqref{eq:opt_lim}, which is impeded by the fact that the operators $S$ and $\pred{S}$ are defined in the non-standard setting needed for measure-valued control. Recall that $\pred{S}$ -- which appears in $\dual{\bar u}{\pred{S} h_n}_{\M,C}$ -- is a restriction of its classical counter-part 
$\pred{S}_{H^1}:L^2(\omo)\to L^2(\omc)$. Hence, while
$\pred{S}\bar h$ may not be well-defined, $\pred{S}_{H^1}\bar h$ is well-defined since $\bar h \in L^2(\omo)$. Moreover, from $\bar u \in \dom S$ we can deduce not only that $\bar u \in \M(\omc)$ but also that $S\bar u \in L^2(\omo)$. 

We thus make use of $\pred{S}_{H^1}$ to define a new bilinear form 
\begin{equation}
    \dual{\cdot}{\cdot}_{\dom S,\ran\pred{S}_{H^1}} 
    : \dom S \times \ran \pred{S}_{H^1} \to \R
\end{equation}
that can be used as a replacement of the term $\dual{\bar u}{\pred{S} h_n}_{\M,C}$ in \eqref{eq:opt_lim} but is well-defined also for the limit $\bar h$. 
Let $u \in \dom S$ and $\lambda \in \ran \pred{S}_{H^1}$ with $h\in L^2(\omo)$ such that $\lambda = \pred{S}_{H^1} h$, then set
\begin{equation}
    \dual{u}{\lambda}_{\dom S,\ran\pred{S}_{H^1}} := \dual{Su}{h}_{L^2(\omo)}.
\end{equation}
With this definition, we obtain the following first-order necessary optimality conditions. 
\begin{theorem}\label{thm:optimality}
    Let $\bar u \in \M(\omc)$ be a minimizer of Problem \eqref{eq:problem}. Then there exist $\bar y \in W^{1,q'}(\Omega)$, $\bar p\in H^1(\Omega)$ 
    and $\bar \lambda \in \ran \pred{S}_{H^1} \subset  L^2(\omc)$ satisfying
    \begin{equation}\label{eq:optimality}
        \left\{\begin{aligned}
                {\pred{E}}({E}\bar y-y_d)-\Aad_{H^1} \bar p &=0,\\
                \bar \lambda - \pred{B} \bar p &=0,\\
                A\bar y- B\bar u &= 0,\\
                \bar\lambda \geq 0,\quad \bar u \geq 0,\quad 
                \dual{\bar u}{\bar \lambda}_{\dom S,\ran\pred{S}_{H^1}} 
                &= 0.
        \end{aligned}\right.\tag{OS}
    \end{equation}
\end{theorem}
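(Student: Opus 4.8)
The plan is to read off the triple $(\bar y,\bar p,\bar\lambda)$ directly from the limiting optimality conditions \eqref{eq:opt_lim} and then to verify \eqref{eq:optimality} line by line, the only genuine work being the passage to the limit along the pre-dual minimizing sequence. Since $\bar u$ is a minimizer of \eqref{eq:prob_reduced}, \cref{thm:existence} guarantees $\bar u\in\dom S$ with $S\bar u\in L^2(\omo)$, so \cref{pro:opt_lim} applies and supplies a minimizing sequence $\{h_n\}\subset\dom\pred{S}$ with $h_n\to\bar h$ in $L^2(\omo)$ together with $\bar h = S\bar u - y_d$, $\pred{S}h_n\ge 0$, $\bar u\ge 0$, and $\lim_{n}\dual{\bar u}{\pred{S}h_n}_{\M,C}=0$.

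First I would set $\bar y:=A^{-1}B\bar u$; since $\bar u\in\dom S$ we have $\bar y\in\dom E\subset W^{1,q'}(\Omega)$ and $S\bar u=E\bar y$, which is exactly the state equation $A\bar y-B\bar u=0$. Next I define the adjoint state $\bar p:=\Aad_{H^1}^{-1}\pred{E}_{H^1}\bar h\in H^1(\Omega)$, where the continuous extension $\pred{E}_{H^1}$ (rather than $\pred{E}$ itself) must be used because $\bar h$ need not lie in $\dom\pred{E}$; inserting $\bar h=E\bar y-y_d$ gives the first line of \eqref{eq:optimality} as an identity in $H^1(\Omega)^*$. Finally I put $\bar\lambda:=\pred{B}_{H^1}\bar p=\pred{S}_{H^1}\bar h\in\ran\pred{S}_{H^1}\subset L^2(\omc)$, which is the second line.

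It then remains to establish the complementarity relations in the last line. For $\bar\lambda\ge 0$ I would use that $\pred{S}_{H^1}$ is continuous and extends $\pred{S}$, so $\pred{S}_{H^1}h_n=\pred{S}h_n\ge 0$ and $\pred{S}_{H^1}h_n\to\pred{S}_{H^1}\bar h=\bar\lambda$ in $L^2(\omc)$; closedness of the non-negative cone of $L^2(\omc)$ then yields $\bar\lambda\ge 0$, while $\bar u\ge 0$ is feasibility. For the pairing, the defining identity of the surrogate form gives $\dual{\bar u}{\bar\lambda}_{\dom S,\ran\pred{S}_{H^1}}=\dual{S\bar u}{\bar h}_{L^2(\omo)}$ because $\bar\lambda=\pred{S}_{H^1}\bar h$; on the other hand the adjoint identity $\dual{\bar u}{\pred{S}h_n}_{\M,C}=\dual{S\bar u}{h_n}_{L^2(\omo)}$ combined with $S\bar u\in L^2(\omo)$ and $h_n\to\bar h$ shows $\dual{\bar u}{\pred{S}h_n}_{\M,C}\to\dual{S\bar u}{\bar h}_{L^2(\omo)}$, and this limit equals $0$ by \eqref{eq:opt_lim}, proving $\dual{\bar u}{\bar\lambda}_{\dom S,\ran\pred{S}_{H^1}}=0$.

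I expect the main obstacle to be this final limit passage. Because $\pred{S}$ is unbounded and $\bar h$ may lie outside $\dom\pred{S}$, neither $\pred{S}\bar h$ nor the pairing $\dual{\bar u}{\pred{S}\bar h}_{\M,C}$ is defined, so one cannot pass to the limit inside $\dual{\bar u}{\pred{S}h_n}_{\M,C}$ on the measure side. The remedy is to transfer the limit to the regular $L^2(\omo)$-side via the adjoint identity (legitimate since $\bar u\in\dom S$ and $h_n\in\dom\pred{S}$) and to record the limiting multiplier through the continuous operator $\pred{S}_{H^1}$ and the surrogate bilinear form, whose relevance hinges on the regularity $S\bar u\in L^2(\omo)$ furnished by \cref{thm:existence}.
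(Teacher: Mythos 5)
Your proposal is correct and follows essentially the same route as the paper: extract the limiting conditions from \cref{pro:opt_lim}, define $\bar y=A^{-1}B\bar u$, $\bar p=\Aad_{H^1}^{-1}\pred{E}_{H^1}\bar h$, $\bar\lambda=\pred{S}_{H^1}\bar h$, and justify the complementarity relation by transferring the limit $\lim_n\dual{\bar u}{\pred{S}h_n}_{\M,C}$ to the $L^2(\omo)$-side via the adjoint identity and the regularity $S\bar u\in L^2(\omo)$, exactly as in the paper's argument for replacing that limit by the surrogate pairing $\dual{\bar u}{\pred{S}_{H^1}\bar h}_{\dom S,\ran\pred{S}_{H^1}}$. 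Your explicit verification of $\bar\lambda\ge 0$ via closedness of the non-negative cone in $L^2(\omc)$, and your insistence on writing $\pred{E}_{H^1}$ rather than $\pred{E}$ when applying it to $\bar h\notin\dom\pred{E}$, are minor refinements of details the paper leaves implicit, not a different approach.
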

\begin{proof}
    First, we note that $\dual{u}{\lambda}_{\dom S,\ran\pred{S}_{H^1}}$ is well-defined because $u \in \dom S$ implies $S u \in L^2(\omo)$, and because $h \in L^2(\omo)=\dom\pred{S}_{H^1}$. 
    We now to argue that this bilinear form can indeed be used in \eqref{eq:opt_lim}. For $h \in \dom \pred{S}$, we have $\lambda =\pred{S}_{H^1}h = \pred{S}h \in C(\omc)$ and thus 
    \begin{equation}
        \dual{u}{\lambda}_{\dom S,\ran\pred{S}_{H^1}} = \dual{Su}{h}_{L^2(\omo)}=\dual{u}{\pred{S}h}_{\M,C}=
        \dual{u}{\lambda}_{\M,C}.
    \end{equation}
    Furthermore, if $u\in \dom S$ and the sequence $\{h_n\}_{n\in\N}\subset\dom \pred{S}$ converges to $h$ in $L^2(\omo)$, 
    then
    \begin{equation}
        \begin{aligned}
            \lim_{n\to \infty}\dual{u}{\pred{S}h_n}_{\M,C}&=
            \lim_{n\to \infty}\dual{u}{\pred{S}_{H^1}h_n}_{\M,C}=
            \lim_{n\to \infty}\dual{u}{\pred{S}_{H^1}h_n}_{\dom S,\ran\pred{S}_{H^1}}\\
            &= \lim_{n\to \infty}\dual{Su}{h_n}_{L^2(\omo)}
            = \dual{Su}{h}_{L^2(\omo)}\\
            &= \dual{u}{\pred{S}_{H^1}h}_{\dom S,\ran\pred{S}_{H^1}}.
        \end{aligned}
    \end{equation}
    Thus, the limit $\lim_{n\to \infty}\dual{\bar u}{\pred{S} h_n}_{\M,C}$ in \eqref{eq:opt_lim} can be replaced 
    by $\dual{\bar u}{\pred{S}_{H^1}\bar h}_{\dom S,\ran\pred{S}_{H^1}}$ as claimed.

    Introducing the state $\bar y:= S\bar u=A^{-1}B\bar u$, an adjoint state $\bar p :=\Aad_{H^1}^{-1}\pred{E}\bar h=\Aad_{H^1}^{-1}\pred{E}(S\bar u-y_d)\in H^1(\Omega)$ and a Lagrangian multiplier $\bar \lambda := \pred{B} \bar p = \pred{S}_{H^1}\bar h\in \ran \pred{S}_{H^1}$ now yields \eqref{eq:optimality}.
\end{proof}

If $E$ is continuous, we can directly pass to the limit in the second relation of \eqref{eq:opt_lim} and obtain a Lagrange multiplier $\bar\lambda = \pred{S}\bar h\in C(\omc)$. 
\begin{corollary}\label{thm:optimality_cont}
    Assume that $E$ is continuous, and let $\bar u \in \M(\omc)$ be a minimizer of Problem \eqref{eq:problem}. Then there exist $\bar y \in \dom A$, $\bar p\in\dom \Aad\subset H^1(\Omega) \cap C(\overline \Omega)$, and $\bar \lambda \in  C(\omc)$ satisfying
    \begin{equation}\label{eq:optimality_cont}
        \left\{\begin{aligned}
                {\pred{E}}({E}\bar y-y_d)-\Aad \bar p &=0,\\
                \bar \lambda - \pred{B} \bar p &=0,\\
                A\bar y- B\bar u &= 0,\\
                \bar\lambda \geq 0,\quad \bar u \geq 0,\quad \dual{\bar u}{\bar\lambda}_{\M,C} &= 0.
        \end{aligned}\right.
    \end{equation}
\end{corollary}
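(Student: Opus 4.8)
The plan is to obtain \cref{thm:optimality_cont} as the special case of \cref{thm:optimality} in which the observation operator $E$ is continuous, so that the whole limiting machinery built around the auxiliary bilinear form $\dual{\cdot}{\cdot}_{\dom S,\ran\pred{S}_{H^1}}$ collapses to the ordinary duality pairing $\dual{\cdot}{\cdot}_{\M,C}$. Concretely, I would begin by invoking \cref{thm:optimality} to produce $\bar y \in W^{1,q'}(\Omega)$, $\bar p \in H^1(\Omega)$, and $\bar\lambda \in \ran\pred{S}_{H^1}$ satisfying \eqref{eq:optimality}, and then show that the continuity of $E$ upgrades the regularity of each of these objects and converts the generalized complementarity pairing into the classical one.

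The key observation is that when $E$ is continuous, $\dom E = W^{1,q'}(\Omega)$ and $\dom\pred{S} = L^2(\omo)$, so the minimizing sequence issue disappears: the limit $\bar h = S\bar u - y_d \in L^2(\omo)$ lies in $\dom\pred{S}$, and hence $\pred{S}\bar h = \pred{B}\Aad^{-1}\pred{E}\bar h$ is directly defined as an element of $C(\omc)$. The chain of operators $\pred{E}$ (now bounded into $W^{1,q'}(\Omega)^*$), $\Aad^{-1}$ (continuous with range in $C(\overline\Omega)$, so that $\bar p = \Aad^{-1}\pred{E}\bar h \in \dom\Aad \subset H^1(\Omega)\cap C(\overline\Omega)$ as recorded earlier), and the restriction $\pred{B}$ into $C(\omc)$ then yield $\bar\lambda = \pred{B}\bar p = \pred{S}\bar h \in C(\omc)$. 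Since $\pred{S}_{H^1}$ restricts to $\pred{S}$ on $\dom\pred{S}$ and $\bar h \in \dom\pred{S}$, the identity $\pred{S}_{H^1}\bar h = \pred{S}\bar h$ shows $\bar\lambda$ is simultaneously the element produced by \cref{thm:optimality} and a genuine continuous function. Likewise $\bar y \in \dom A$ follows because $\bar u \in \dom S$ forces $A\bar y = B\bar u$ with $\bar y = A^{-1}B\bar u$, and the first relation, which in \cref{thm:optimality} used $\Aad_{H^1}$, can now be written with $\Aad$ itself since $\bar p \in \dom\Aad$.

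The final and slightly more delicate point is the complementarity condition. By the first display computed inside the proof of \cref{thm:optimality}, for $h\in\dom\pred{S}$ one has $\dual{u}{\pred{S}_{H^1}h}_{\dom S,\ran\pred{S}_{H^1}} = \dual{u}{\pred{S}h}_{\M,C}$; applying this with $h = \bar h \in \dom\pred{S}$ and $\lambda = \pred{S}\bar h = \bar\lambda \in C(\omc)$ gives
\begin{equation}
    \dual{\bar u}{\bar\lambda}_{\dom S,\ran\pred{S}_{H^1}} = \dual{\bar u}{\pred{S}\bar h}_{\M,C} = \dual{\bar u}{\bar\lambda}_{\M,C},
\end{equation}
so the generalized orthogonality relation in \eqref{eq:optimality} is exactly the ordinary pairing $\dual{\bar u}{\bar\lambda}_{\M,C} = 0$ appearing in \eqref{eq:optimality_cont}. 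Translating each of the four lines of \eqref{eq:optimality} under these identifications reproduces \eqref{eq:optimality_cont}, which completes the argument.

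I expect the main obstacle to be purely bookkeeping rather than conceptual: namely verifying carefully that $\bar h$ really lands in $\dom\pred{S}$ when $E$ is continuous (so that $\pred{S}\bar h$ is legitimately defined as a continuous function rather than merely as the $L^2(\omc)$-object $\pred{S}_{H^1}\bar h$) and that the regularity $\bar p\in\dom\Aad\subset C(\overline\Omega)$ is available to justify writing $\Aad$ in place of $\Aad_{H^1}$ in the adjoint equation. Both follow from the structural facts assembled in \cref{sec:state_equation}, but they must be stated explicitly so that the replacement of the auxiliary pairing by $\dual{\cdot}{\cdot}_{\M,C}$ is rigorous.
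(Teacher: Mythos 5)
Your argument is correct and is essentially the paper's own: the paper likewise obtains the corollary by passing to the limit in the second relation of \eqref{eq:opt_lim} (equivalently, specializing \eqref{eq:optimality}), using that continuity of $E$ forces $\dom \pred{S}=L^2(\omo)$, so that $\bar h\in\dom\pred{S}$, $\bar\lambda=\pred{S}\bar h=\pred{B}\bar p\in C(\omc)$ with $\bar p\in\dom\Aad$, and the auxiliary pairing $\dual{\cdot}{\cdot}_{\dom S,\ran\pred{S}_{H^1}}$ collapses to $\dual{\cdot}{\cdot}_{\M,C}$. The one small point worth making explicit is that $\bar\lambda\ge 0$ must now be read pointwise on $\omc$ rather than $\nu$-a.e.; this follows either from the uniform convergence $\pred{S}h_n\to\pred{S}\bar h$ in $C(\omc)$ (since $\pred{S}=\pred{B}\Aad^{-1}\pred{E}$ is bounded when $E$ is continuous) or from continuity of $\bar\lambda$ combined with the standing assumption that $\nu(\omc\cap O)>0$ for every non-empty relatively open subset.
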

In this case, the optimality conditions can also be obtained by direct application of the Fenchel duality theorem to problem \eqref{eq:predualpr}, where the last three relations of \eqref{eq:optimality_cont} are the complementarity conditions of the second relation of \eqref{eq:formal_opt}, which here read $-\bar u \in \partial\delta_{C^+}(\bar\lambda)$.

\section{Connection to problems with control costs}\label{sec:MY}

In this section, we show that problem \eqref{eq:prob_reduced} can be interpreted as the limit problem for vanishing $L^2$ or measure-space control costs.

\subsection{\texorpdfstring{$\scriptstyle L^2$}{L²} control costs}\label{sec:MY:L2}

We first connect the measure-space problem \eqref{eq:prob_reduced} with the classical control-constrained linear quadratic problem
\begin{equation}\label{eq:regproblem}
    \min_{u\in L^2(\omc)}  \frac{1}{2}\|Su-y_d\|_{L^2(\omo)}^2+\frac{\alpha}{2}\|u\|^2_{L^2(\omc)} + \delta_{L^2(\omc)^+}(u),
\tag{P$_\alpha$}
\end{equation}
which for every $\alpha>0$ is known to admit a minimizer $u_\alpha \in L^2(\omc)$; see, e.g., \cite[Theorem 2.14]{TroBook}. Arguing as in the proof of \cref{thm:existence}, it can be shown that $u_\alpha$ converges weakly-$*$ to some $\hat u$ in $\M(\omc)$ as $\alpha\to 0$ (up to a subsequence if $S$ is not injective). It is, however, not obvious that the limit
$\hat u$ coincides with the global minimizer $\bar u$ from \cref{thm:existence}. The validity of this assertion hinges on the question, whether there is a sequence 
$\{u_n\}_{n\in\N}\subset L^2(\omc)^+$ such that $u_n \rightharpoonup^* \bar u$ and $Su_n \rightharpoonup S\bar u$ in $L^2(\omo)$, i.e., whether optimal control
and optimal observation can be approximated simultaneously by a sequence of positive functions. 

Due to \cref{thm:wsd}, this is certainly the case if $E$ is continuous, since then $u_n \rightharpoonup^* \bar u$ implies $Su_n \to S\bar u$ by \cref{lem:wsd}.
\begin{theorem}
    Assume that $E$ is continuous, $S$ is injective, and $\omc$ is equipped with a measure $\nu$ such that $\nu(\omc \cap O)>0$ for every open set $O\subset \R^d$, such that
    $\omc \cap O$ is non-empty. Then 
    \begin{equation}
        u_\alpha \rightharpoonup^* \bar u \qquad\text{ and }\qquad Su_\alpha \to S\bar u. 
    \end{equation}
\end{theorem}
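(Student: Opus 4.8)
The plan is to run a regularization (\(\Gamma\)-convergence) argument: construct a positivity-preserving recovery sequence in \(L^2(\omc)\), use it to bound the regularized tracking terms from above, extract a weak-\(*\) limit of the minimizers \(u_\alpha\), and identify it with \(\bar u\) via uniqueness. First I would fix the recovery sequence. Since \(E\) is continuous, \(S=EA^{-1}B\) is everywhere defined on \(\M(\omc)\) and, by \cref{lem:wsd}, weak-\(*\)-to-strong continuous into \(L^2(\omo)\). By \cref{thm:wsd} (which is where the measure condition \(\nu(\omc\cap O)>0\) enters) there is a sequence \(\{u_n\}_{n\in\N}\subset L^2(\omc)^+\) with \(u_n\rightharpoonup^*\bar u\) in \(\M(\omc)\), and continuity of \(E\) then upgrades this to \(Su_n\to S\bar u\) strongly in \(L^2(\omo)\).

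Next I would transfer this to the minimizers \(u_\alpha\). Writing \(m:=\half\norm{S\bar u-y_d}_{L^2(\omo)}^2\) for the optimal value of \eqref{eq:prob_reduced}, minimality of \(u_\alpha\) tested against the feasible competitor \(u_n\in L^2(\omc)^+\) gives
\begin{equation}
    \half\norm{Su_\alpha-y_d}_{L^2(\omo)}^2+\frac{\alpha}{2}\norm{u_\alpha}_{L^2(\omc)}^2
    \le \half\norm{Su_n-y_d}_{L^2(\omo)}^2+\frac{\alpha}{2}\norm{u_n}_{L^2(\omc)}^2.
\end{equation}
Letting \(\alpha\to 0\) for fixed \(n\) and then \(n\to\infty\) yields \(\limsup_{\alpha\to 0}\half\norm{Su_\alpha-y_d}_{L^2(\omo)}^2\le m\); in particular \(\{Su_\alpha\}\) is bounded in \(L^2(\omo)\). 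Since \(u_\alpha\ge 0\), the pre-dual Slater estimate from the proof of \cref{lem:bounded}, namely \(\eps\norm{u_\alpha}_{\M(\omc)}\le\dual{u_\alpha}{\pred{S}\pdS}_{\M,C}=\dual{Su_\alpha}{\pdS}_{L^2(\omo)}\le\norm{Su_\alpha}_{L^2(\omo)}\norm{\pdS}_{L^2(\omo)}\), then bounds \(\norm{u_\alpha}_{\M(\omc)}\) uniformly in \(\alpha\).

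To identify the limit, I would argue along any sequence \(\alpha_k\to 0\). Boundedness and Banach--Alaoglu (\(C(\omc)\) separable) give a subsequence with \(u_{\alpha_k}\rightharpoonup^*\hat u\) in \(\M(\omc)\); weak-\(*\)-to-strong continuity of \(S\) gives \(Su_{\alpha_k}\to S\hat u\) in \(L^2(\omo)\), and weak-\(*\) closedness of the positive cone gives \(\hat u\ge 0\). Passing to the limit in the tracking term (continuous along the subsequence) together with the upper bound above yields \(\half\norm{S\hat u-y_d}_{L^2(\omo)}^2\le m\), so \(\hat u\) is feasible and optimal for \eqref{eq:prob_reduced}. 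Injectivity of \(S\) makes the minimizer of \cref{thm:existence} unique, forcing \(\hat u=\bar u\). As the limit is independent of the chosen subsequence, the subsequence principle (the weak-\(*\) topology being metrizable on the bounded set containing the \(u_\alpha\)) upgrades this to convergence of the whole family, i.e.\ \(u_\alpha\rightharpoonup^*\bar u\) and \(Su_\alpha\to S\bar u\).

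The main obstacle is the recovery step. What is genuinely needed is a sequence in the positive cone \(L^2(\omc)^+\)—not merely in \(L^2(\omc)\)—that approximates \(\bar u\) weak-\(*\) while its observations converge; the positivity is delivered by \cref{thm:wsd} and the observation convergence by continuity of \(E\) through \cref{lem:wsd}. It is precisely the strong convergence \(Su_n\to S\bar u\) (rather than merely weak convergence) that allows the quadratic objective to pass to the limit on the competitor side and thereby closes the bound \(\limsup_{\alpha\to 0}\half\norm{Su_\alpha-y_d}_{L^2(\omo)}^2\le m\); without continuity of \(E\) only weak convergence of \(Su_n\) would be available and this comparison would fail, which is exactly why the hypothesis on \(E\) is imposed here.
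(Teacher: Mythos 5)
Your proposal is correct and follows essentially the same route as the paper: both construct a positive $L^2$ recovery sequence via \cref{thm:wsd}, upgrade its observations to strong convergence via continuity of $E$ and \cref{lem:wsd}, use it as a competitor to show that the $u_\alpha$ form a minimizing sequence for \eqref{eq:prob_reduced}, and then invoke the boundedness and compactness machinery from \cref{lem:bounded} and \cref{thm:existence} together with injectivity of $S$. Your write-up is merely somewhat more explicit than the paper's about the double limit in $\alpha$ and $n$ and about the subsequence principle used to pass from a diagonal sequence to convergence of the whole family.
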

\begin{proof}
    By \cref{thm:wsd}, there exists a sequence $\{v_n\}_{n\in\N}\subset L^2(\omc)^+$ such that $v_n \rightharpoonup^* \bar u$. Since $E$ is continuous, this implies via \cref{lem:wsd}
    that $Sv_n \to S\bar u$ strongly and thus that $\|Sv_n-y_d\|_{L^2(\omo)} \to \|S\bar u-y_d\|_{L^2(\omo)}$. Denoting by $J_\alpha$ the functional in \eqref{eq:regproblem}
    and by $J$ the functional in \eqref{eq:prob_reduced}, we conclude that for each $\varepsilon > 0$ there are $v_n$ and $\alpha_n$ such that 
    \begin{equation} 
        J_{\alpha_n}(u_{\alpha_n}) = \inf_{u\in L^2(\omc)} J_{\alpha_n}(u) \le J_{\alpha_n}(v_n) \le J(\bar u)+\varepsilon.
    \end{equation}
    Hence, $\{u_{\alpha_n}\}_{n\in\N}$ is a minimizing sequence for $J$, which satisfies -- like any minimizing sequence -- the properties stated in the proof of \cref{thm:existence}.
    This yields our assertions. 
\end{proof}
On the other hand, if $E$ and thus $S$ is unbounded, the graph norm on $\dom S$, defined by $\|u\|_S := \|u\|_{\M(\omc)}+\|Su\|_{L^2(\omo)}$, is strictly stronger than $\|u\|_{\M(\omc)}$. Thus, 
there may be sequences in $L^2(\omc)$ that converge weakly-$*$ in $(\M(\omc),\|u\|_{\M(\omc)})$ but are unbounded in $(\dom S,\|u\|_{S})$ and thus cannot converge weakly-$*$ with respect to this norm. 
Hence if $S$ is unbounded, the weak-$*$ sequential closure of $L^2(\omc)$ may be a proper subset of $\dom S$, and thus we cannot expect in general that our global minimizer $\bar u$ can be approximated 
by a minimizing sequence in $L^2(\omc)$. 

\bigskip

Although the necessary optimality conditions for Problem \eqref{eq:regproblem} are standard (see, e.g., \cite[Theorem 2.22]{TroBook}), it is instructive to derive them using the convex analysis framework employed for \eqref{eq:prob_reduced}.
Since Problem \eqref{eq:regproblem} is posed in the Hilbert space $L^2(\omc)$ and we have assumed $E$ to be continuous, we can apply the Fenchel duality theorem directly, 
where we denote by $\mathcal{F}^*$ the tracking term and by $\mathcal{G}_\alpha^*$ the two remaining terms in \eqref{eq:regproblem}.
To derive an explicit characterization of the second relation of \eqref{eq:formal_opt}, we set $\lambda_\alpha := S^*h_\alpha\in L^2(\omo)$ and use the fact that due to the Hilbert space setting, $\mathcal{G}_\alpha$ coincides with the Moreau envelope of $\delta_{L^2(\omc)^+}$, i.e.,
\begin{equation}
    \mathcal{G}_\alpha(\lambda) = \left(\frac{\alpha}{2}\|\cdot\|^2_{L^2(\omc)} + \delta_{L^2(\omc)^+}\right)^*(\lambda) = \left(\delta_{L^2(\omc)^+}\right)_\alpha(\lambda):=\min_{w\in L^2(\omc)^+} \frac1{2\alpha}\|w-\lambda\|_{L^2(\omc)}^2,
\end{equation}
see, e.g., \cite[Proposition 13.12]{Bauschke}. Hence, $\partial\mathcal{G}_\alpha$ coincides with the Yoshida regularization of $\partial \delta_{L^2(\omc)^+}$, i.e.,
\begin{equation}
    \partial(\mathcal{G}_\alpha)(\lambda) = \left(\partial\delta_{L^2(\omc)^+}\right)_\alpha(\lambda) := \frac1\alpha \left(\lambda - \mathrm{prox}_{\alpha\delta_{L^2(\omc)^+}}(\lambda)\right)=
    \frac1\alpha \left(\lambda - \mathrm{proj}_{{L^2(\omc)^+}}(\lambda)\right),
\end{equation}
since the proximal mapping of an indicator function of a convex set $C$ is given by the metric projection onto $C$; see, e.g., \cite[Proposition 12.29]{Bauschke}. After some algebraic manipulations, we thus obtain the 
the optimality system 
\begin{equation}\label{eq:regopt}
    \left\{
        \begin{aligned}
            \lambda_\alpha &= S^*(Su_\alpha-y^d),\\
            u_\alpha &= \frac1\alpha \max\left(0,-\lambda_\alpha\right),
        \end{aligned}
\right.\tag{OS$_\alpha$}
\end{equation}
where $\max$ is to be understood pointwise almost everywhere in $\omc$.
Note that the system \eqref{eq:regopt} coincides with the well-known projection formulation of the optimality condition for the control-constrained linear-quadratic problem \eqref{eq:regproblem}; see, e.g., \cite[Theorem 2.28]{TroBook}.

\subsection{Measure-space control costs}\label{sec:MY:M}

We now connect problem \eqref{eq:prob_reduced} with the non-negative ``sparse control problem''
\begin{equation}\label{eq:m_problem}
    \min_{u\in \M(\omc)}  \frac{1}{2}\|Su-y_d\|_{L^2(\omo)}^2+\beta\|u\|_{\M(\omc)} + \delta_{\M(\omc)^+}(u)
\tag{P$_\beta$}
\end{equation}
considered in \cite{Clason:2011a}. Existence of an optimal control $u_\beta \in \M(\omc)^+$ can be shown as in \cref{thm:existence}, using the fact that a minimizing sequence is necessarily bounded in $\M(\omc)$ by virtue of the additional (weak-$*$ lower semi-continuous) term. Similarly, by the minimizing property of $u_\beta$, the family $\{Su_\beta\}_{\beta>0}$ is bounded in $L^2(\omo)$ and hence $u_\beta$ converges weakly-$*$ to $\bar u$ in $\M(\omc)$ as $\beta\to 0$ (up to a subsequence if $S$ is not injective) if \cref{ass:key} holds and $E$ is continuous. If on the other hand $E$ is unbounded, the discussion in \cref{sec:MY:L2} shows that $\dom S$ is in general not weakly-$*$ closed, and we cannot expect weak-$*$ convergence of $u_\beta$ to a minimizer $\bar u$.

Optimality conditions for \eqref{eq:m_problem} with a bounded control-to-observation mapping $S$ can be derived by application of the Fenchel duality theorem, making use of the fact that the Fenchel conjugate of 
\begin{equation}
    \calG_\beta:C(\omc) \to \Rbar,\qquad \calG_\beta(\lambda) = \delta_{\{v\geq -\beta\}}(\lambda) = \begin{cases} 0 & \lambda(x) \geq -\beta \text{ for all }x\in \omc,\\
        \infty & \text{else},
    \end{cases}
\end{equation}
is given by
\begin{equation}
    \calG_\beta^*:\M(\omc)\to\Rbar, \qquad \calG_\beta^*(u) = \beta\norm{u}_{\M(\omc)} + \delta_{\M(\omc)^-}(u),
\end{equation}
see \cite[Remark 2.5]{Clason:2011a}. (Recall that by \eqref{eq:fenchel_duality} the dual problem involves $\calG^*_\beta(-u)$.) 
Fenchel duality now leads to the necessary optimality conditions
\begin{equation}\label{eq:optimality_beta_cont}
    \left\{\begin{aligned}
            {\pred{E}}({E} y_\beta-y_d)-\Aad p_\beta &=0,\\
            \lambda_\beta - \pred{B} p_\beta &=0,\\
            Ay_\beta- Bu_\beta &= 0,\\
            \lambda_\beta \geq -\beta,\quad u_\beta \geq 0,\quad \dual{u_\beta}{\lambda_\beta+\beta}_{\M,C} &= 0,
    \end{aligned}\right.
\end{equation}
see again \cite[Remark 2.5]{Clason:2011a}, where the last relation was equivalently expressed as a variational inequality. Setting $\beta=0$, we recover \eqref{eq:opt_lim}. 

The optimality conditions \eqref{eq:optimality_beta_cont} are frequently used as a justification for calling $u_\beta$ a \emph{sparse control}: From the last relations, we see that $u_\beta$ must be zero on all subsets of $\omc$ where $\pred{B}p_\beta$ is strictly greater than $-\beta$. Hence, the support of $u_\beta$ is contained in the set $\{x\in\omc:\pred{B}p_\beta(x) = -\beta\}$, which in many situation (e.g., if $p_\beta$ is harmonic) can be argued to be a set of zero Lebesgue measure. Furthermore, increasing $\beta$ will decrease the size of this set. The same argument is possible for \eqref{eq:opt_lim}: the optimal control $\bar u$ must be zero on all subsets with $\pred{B}p_\beta>0$, and hence the support of $\bar u$ is contained in $\{x\in\omc:\pred{B}\bar p(x) = 0\}$ (which has Lebesgue measure zero in similar situations as in the case $\beta>0$). This implies that optimal measure-space controls have an inherent sparsity independent of the sparsity-promoting control cost, whose role is solely to control the size of the support.

We can also apply our framework from \cref{sec:optimality} to derive optimality conditions for unbounded observation operators (which cannot be treated using the standard approach as in, e.g., \cite{Clason:2011a}). Proceeding exactly as before with $\delta_{C(\omc)^+}$ replaced by $\delta_{\{v\geq -\beta\}}$ and $\delta_{\M(\omc)^+}$ replaced by $\beta\norm{{\cdot}}_{\M(\omc)}+\delta_{\M(\omc)^+}$, we obtain the modified optimality conditions
\begin{equation}\label{eq:optimality_beta}
    \left\{\begin{aligned}
            {\pred{E}}({E} y_\beta-y_d)-\Aad_{H^1}  p_\beta &=0,\\
            \lambda_\beta - \pred{B}  p_\beta &=0,\\
            A y_\beta- B u_\beta &= 0,\\
            \lambda_\beta \geq -\beta,\quad  u_\beta \geq 0,\quad 
            \beta\norm{ u_\beta}_{\M(\omc)}+\dual{ u_\beta}{ \lambda_\beta}_{\dom S,\ran\pred{S}_{H^1}} 
            &= 0.
\end{aligned}\right.\tag{OS$_\beta$}
\end{equation}
Again setting $\beta=0$, we recover \eqref{eq:optimality}. However, since the last relation can no longer be interpreted pointwise, a sparsity property of $u_\beta$ does not follow directly.

\section{Numerical solution}\label{sec:discretization}

The numerical solution is based on the conforming discretization of $\M(\omc)$ introduced in \cite{Clason:2012}, which we briefly recall. The starting point is to replace $S:\M(\omc)\to L^2(\omo)$ by its finite element semidiscretization $S_h:\M(\omc)\to Y_h$, where $Y_h\subset L^2(\omo)$ is a finite-dimensional space spanned by the usual continuous piecewise linear nodal basis (``hat'') functions attached to the vertices $\{x_j\}_{j=1}^{N}$ of a triangulation of $\overline\Omega$. We then consider the semidiscrete optimal control problem
\begin{equation}\label{eq:discrete:semi_problem}
    \min_{u \in \M(\omc)}\ \frac{1}{2}\|S_hu - y_d\|^2_{L^2(\omo)}+\delta_{\M(\omc)^+}(u).
\tag{P$_h$}
\end{equation}
Existence of an optimal control $\bar u$ can be shown as in \cref{sec:existence:minimizer}. Although the optimal state $\bar y_h = S_h\bar u$ is unique, this is no longer the case for the control due to the finite number of observations. However, there is a unique $\bar u_h\in\M(\omc)$ with $\bar y_h=S_h(\bar u_h)$ that can be represented as a linear combination of Dirac measures concentrated on the vertices $x_j$ contained in $\omc$; see \cite[Theorem~3.2]{Clason:2012}. We can thus restrict the minimization in \eqref{eq:discrete:semi_problem} over the set $U_h$ of such linear combinations. 
In this sense, this approach is related to a discretization method introduced in \cite{Winther:1978} for unconstrained linear-quadratic problems and also to the variational discretization of control-constrained problems of \cite{Hinze2005}.

This allows expressing Problem \eqref{eq:discrete:semi_problem} purely in terms of the expansion coefficients $\vec u$ of $\bar u_h$ and $\vec y$ of $\bar y_h$. Using that $\bar u_h\in\M(\omc)^+$ if and only if $\vec u \geq 0$ componentwise and applying the Fenchel duality theorem as in \cref{thm:optimality_cont} (all finite-dimensional operators being bounded) yields the fully discrete optimality conditions
\begin{equation}
    \label{eq:discrete:opt}
    \left\{\begin{aligned}
            A_h^T \vec p &= M_h(\vec y - \vec y_d),\\
            A_h \vec y &= B_h \vec u,\\
            -\vec u &\in \partial\delta_{(\R^N)^+}(B_h^T \vec p),
    \end{aligned}\right.
\tag{OS$_h$}
\end{equation}
where $A_h$ denotes the stiffness matrix corresponding to the differential operator $A$, $M_h$ the restricted mass matrix on the observation domain $\omo$, and $B_h^T$ the discrete restriction operator to the components of $\vec p$ corresponding to vertices contained in $\omc$. (Note the lack of mass matrix for the discrete state equation.) Since $\R^N$ is a Hilbert space, we can reformulate the last relation in \eqref{eq:discrete:opt} using resolvent calculus similarly as in \cref{sec:MY} as
\begin{equation}
    \label{eq:discrete:opt_ref}
    -\vec u = \frac1\alpha\left(B_h^T\vec p-\alpha \vec u - \mathrm{proj}_{(\R^N)^+}(B_h^T\vec p-\alpha \vec u)\right)
\end{equation}
for any $\alpha>0$; see also \cite[Theorem 4.41]{Kunisch:2008a}. (Comparing this relation with the last relation in \eqref{eq:regopt}, we remark that the only difference is the presence of $\vec u$ on the right-hand side.) 
In particular, for $\alpha=1$ we obtain
\begin{equation}
    \vec u = \max\left(0,\vec u - B_h^T \vec p\right),
\end{equation}
where the $\max$ is to be understood componentwise. 

It is well-known that the $\max$ operator is semismooth on $\R^N$ with Newton derivative at $\vec v$ in direction $\vec h$ is given componentwise by
\begin{equation}
    \left[D_N \max(0,\vec v)\vec h\right]_j = 
    \begin{cases} 
        h_j & \text{if } v_j >0,\\
        0 & \text{if } v_j \leq 0,
    \end{cases}
\end{equation}
and that system \eqref{eq:discrete:opt} therefore can be solved by a superlinearly convergent semismooth Newton method; see \cite{Kunisch:2008a,Ulbrich:2002a}.
To account for the local convergence of Newton methods, we compute a starting point by solving a sequence of discrete regularized problems analogous to \cref{sec:MY}.
Specifically, we add for $\alpha>0$ the $\ell_2$ penalty  $\frac\alpha2 |\vec u|_2^2$ and proceed as in \cref{sec:MY} to obtain
\begin{equation}
    \label{eq:discrete:opt_}
    \left\{\begin{aligned}
            A_h^T \vec p &= M_h(\vec y - \vec y_d),\\
            A_h \vec y &= B_h \vec u,\\
            \vec u &= \frac1\alpha\max\left(0,-B_h^T \vec p\right).
    \end{aligned}\right.
\tag{OS$_{h,\alpha}$}
\end{equation}
Since the last relation is explicit, we can eliminate $\vec u$ and apply a semismooth Newton method to the reduced system, starting with $\alpha=1$ and successively reducing $\alpha$, taking for each $\alpha$ the previous solution as starting point.

\section{Numerical examples}\label{sec:examples}

We illustrate the nature of the generalized measure-space controls with numerical examples for the Laplace equation on the unit square with homogeneous Dirichlet conditions, i.e., we take $\Omega=[-1,1]^2\subset\R^2$ and $A= -\Delta$. The domain is discretized using the standard uniform triangulation arising from $256\times 256$ equidistributed nodes. The optimal controls for the discretized problem are computed using a \textsc{matlab} implementation of the approach described in \cref{sec:discretization}, which can be downloaded from \url{https://github.com/clason/positivecontrol}.

\begin{figure}[t]
    \centering
    \begin{subfigure}{0.475\textwidth}
        \centering
        \includegraphics[width=\textwidth]{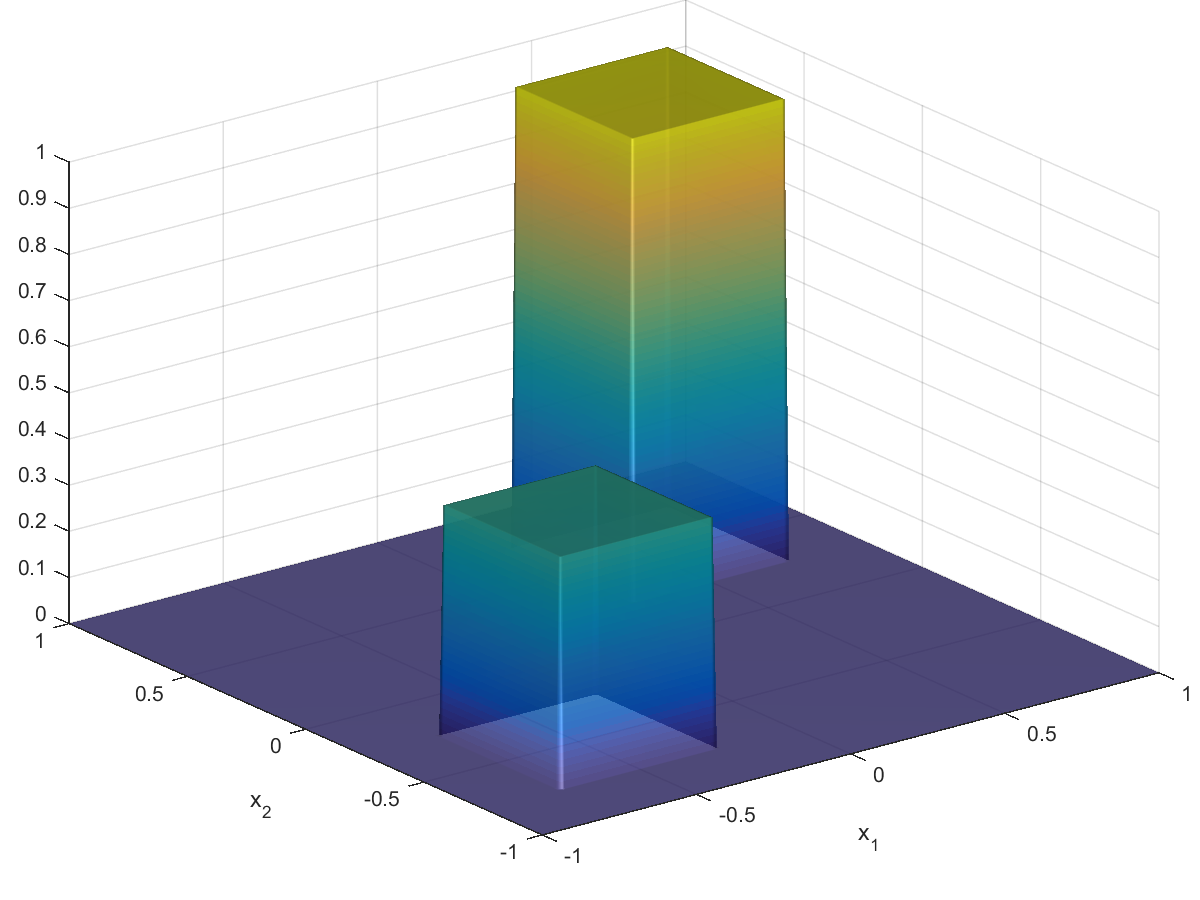}
        \caption{desired state $y_d$\label{fig:target:1}}
    \end{subfigure}
    \hfill
    \begin{subfigure}{0.475\textwidth}
        \centering
        \includegraphics[width=\textwidth]{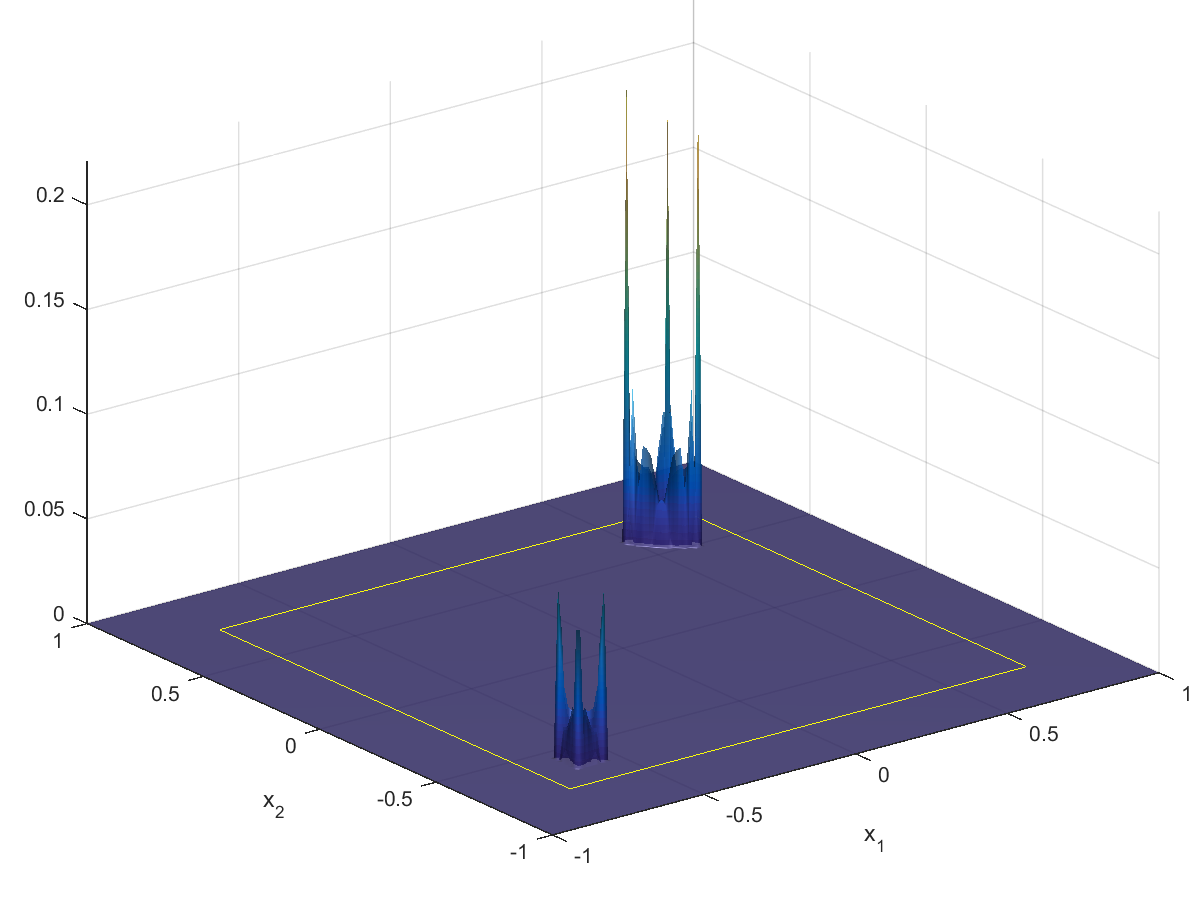}
        \caption{optimal control $\bar u_h$\label{fig:box:control}}%
    \end{subfigure}
    \newline
    \begin{subfigure}{0.475\textwidth}
        \centering
        \includegraphics[width=\textwidth]{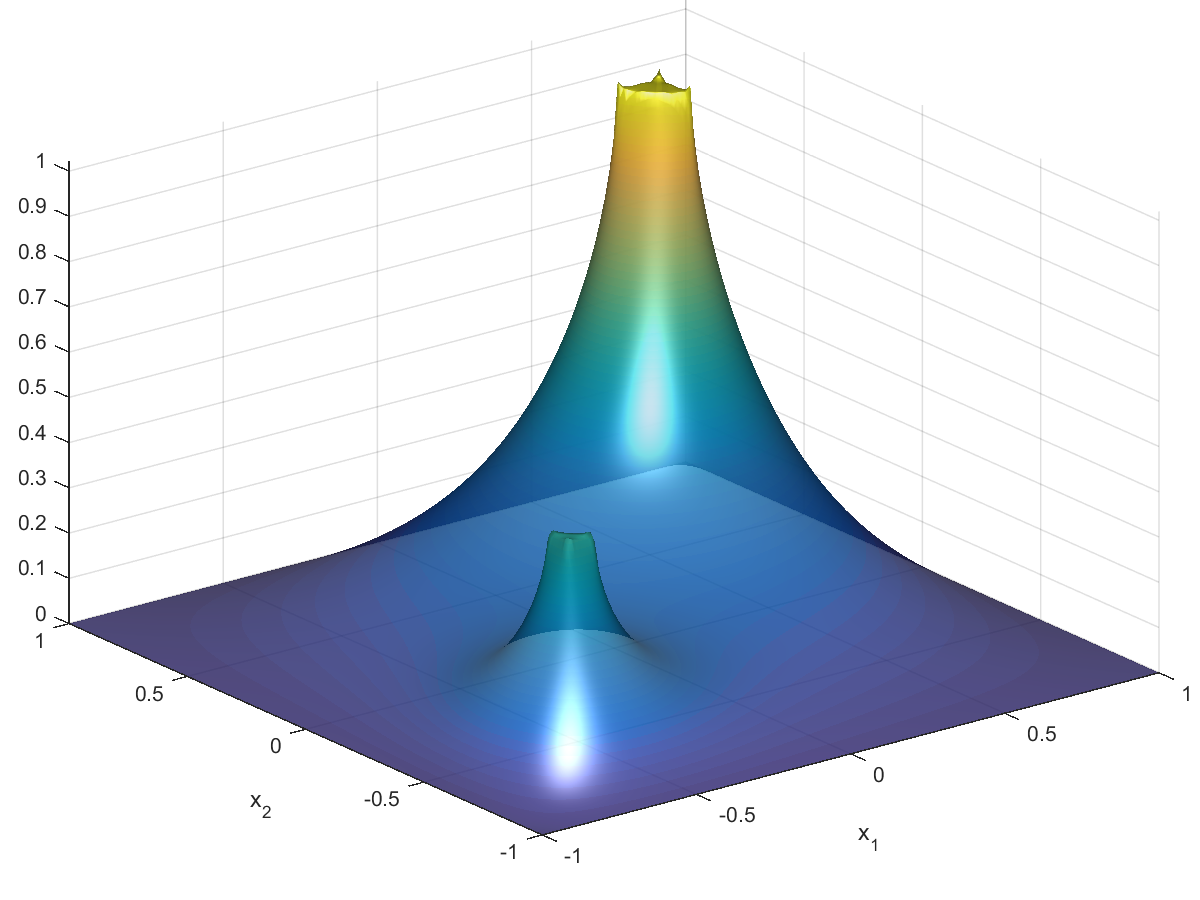}
        \caption{optimal state $\bar y_h$\label{fig:opt:1}}%
    \end{subfigure}
    \hfill
    \begin{subfigure}{0.475\textwidth}
        \input{box_sparsity.tikz} 
        \caption{sparsity pattern \label{fig:box:sparsity}}%
    \end{subfigure}
    \caption{First desired state and resulting optimal control and state}
    \label{fig:box}
\end{figure}
For the first example, we choose the desired state as
\begin{equation}
    \label{eq:target1}
    \begin{aligned}[t]
        y_d(x_1,x_2) &= \chi_{\{t:|t-0.5|<0.25\}}(x_1)\chi_{\{t:|t-0.5|<0.25\}}(x_2)\\
        \MoveEqLeft[-1] +
        \frac12\chi_{\{t:|t+0.5|<0.25\}}(x_1)\chi_{\{t:|t+0.5|<0.25\}}(x_2),
    \end{aligned}
\end{equation}
see \cref{fig:target:1}. According to the discussion at the end of \cref{sec:existence:examples}, the control domain has to be chosen as a proper subset of $\Omega$ for Problem \eqref{eq:problem} to be well-posed;
here we set
\begin{equation}
    \omc = \set{x\in\Omega:|x|_\infty\leq \tfrac34}.
\end{equation}
The observation domain is chosen as $\omo=\Omega$.
The expansion coefficients of the optimal control are shown in \cref{fig:box:control}, where the boundary of the control domain is also marked by a yellow line; the corresponding optimal state is shown in \cref{fig:opt:1}. It can be observed that the optimal control is sparse, which is in accordance with the discussion in \cref{sec:MY:M}. This is further illustrated in \cref{fig:box:sparsity} where the nodes with non-zero control coefficients are marked with a small circle.
\begin{figure}[t]
    \centering
    \begin{subfigure}{0.475\textwidth}
        \centering
        \includegraphics[width=\textwidth]{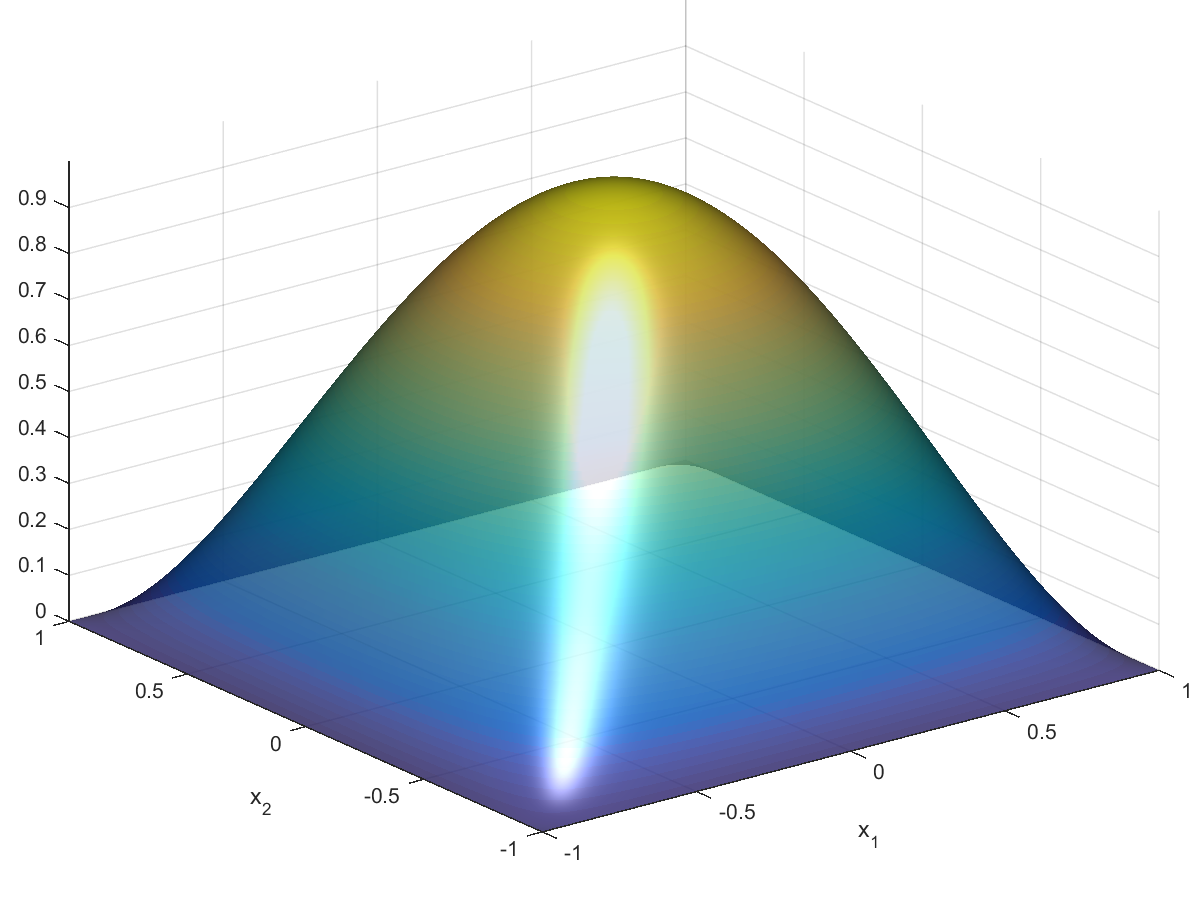}
        \caption{desired state $y_d$\label{fig:target:2}}%
    \end{subfigure}
    \hfill
    \begin{subfigure}{0.475\textwidth}
        \centering
        \includegraphics[width=\textwidth]{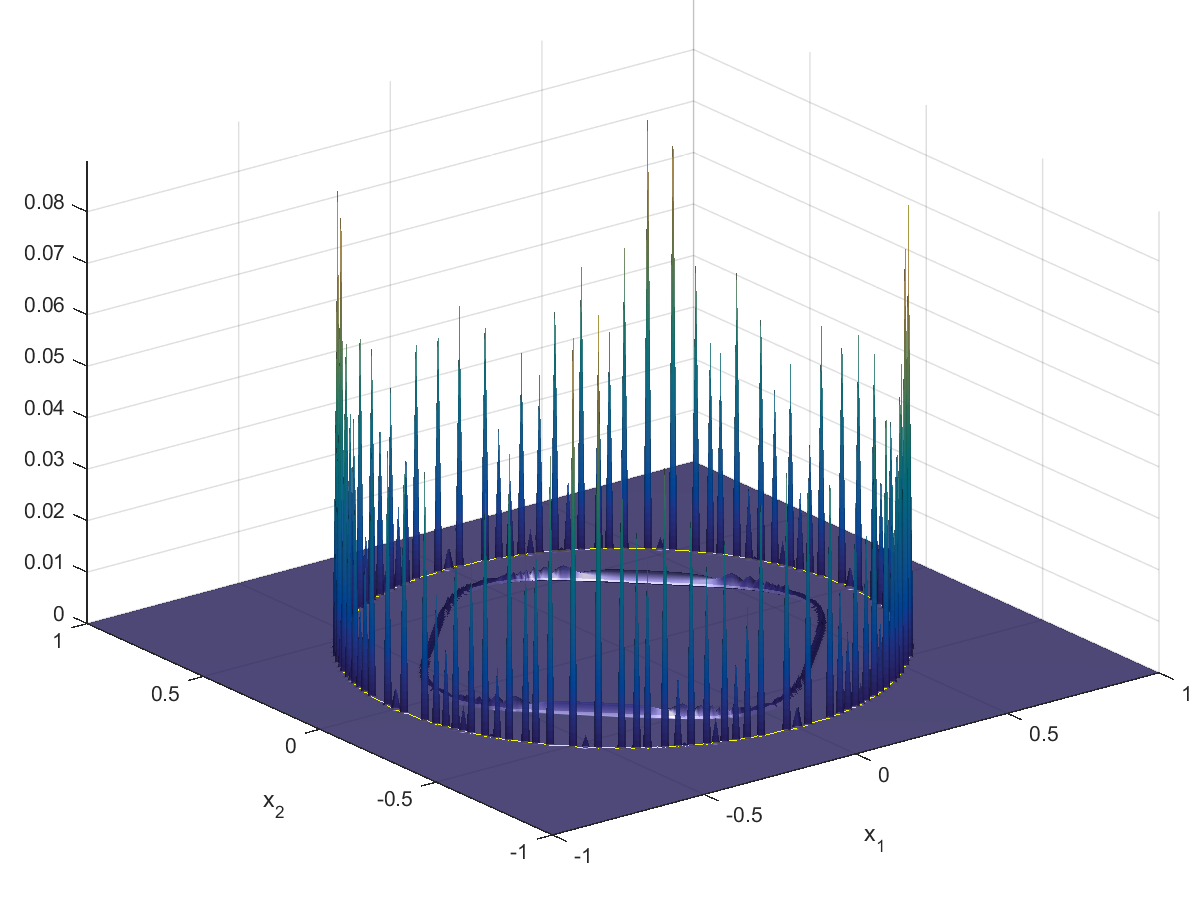}
        \caption{optimal control $\bar u_h$\label{fig:quad:control}}%
    \end{subfigure}
    \newline
    \begin{subfigure}{0.475\textwidth}
        \centering
        \includegraphics[width=\textwidth]{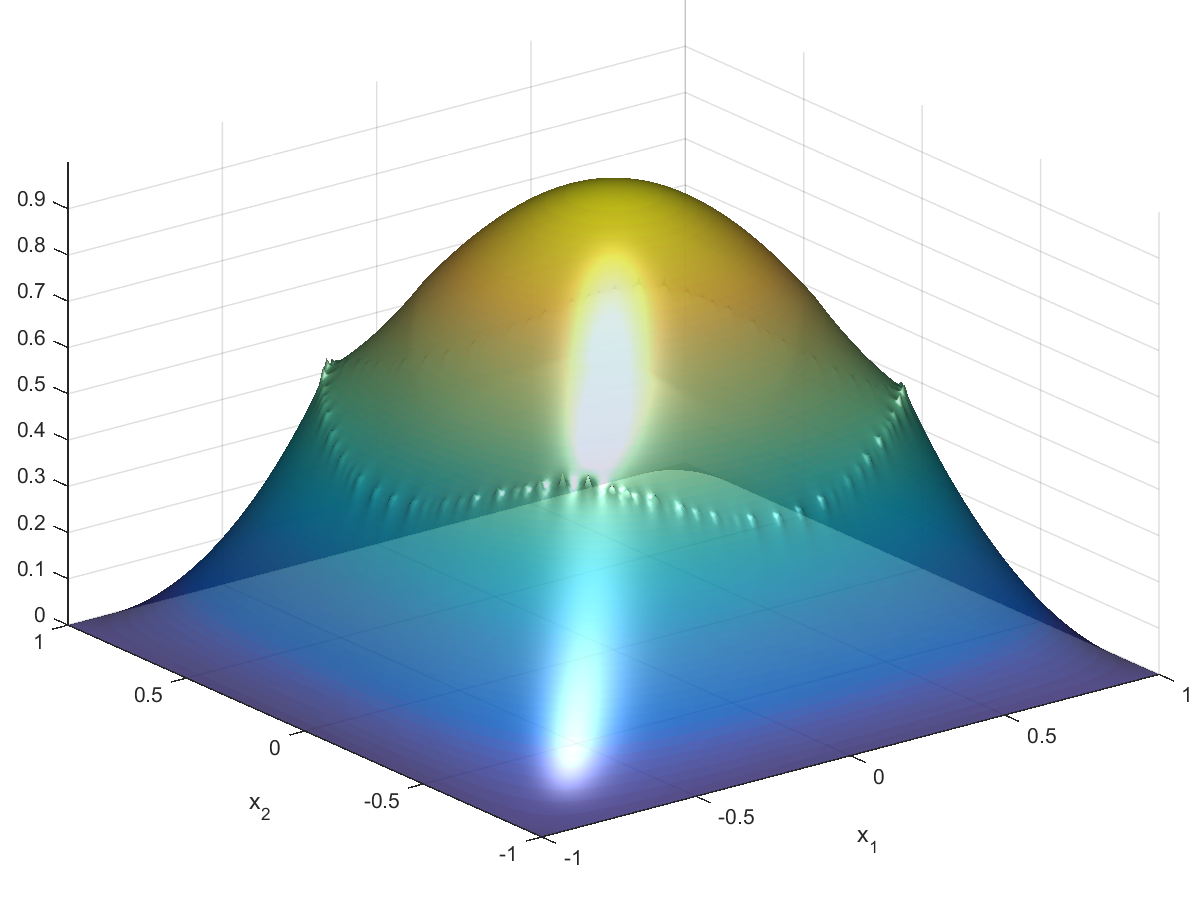}
        \caption{optimal state $\bar y_h$\label{fig:opt:2}}%
    \end{subfigure}
    \hfill
    \begin{subfigure}{0.475\textwidth}
        \centering
        \includegraphics[width=\textwidth]{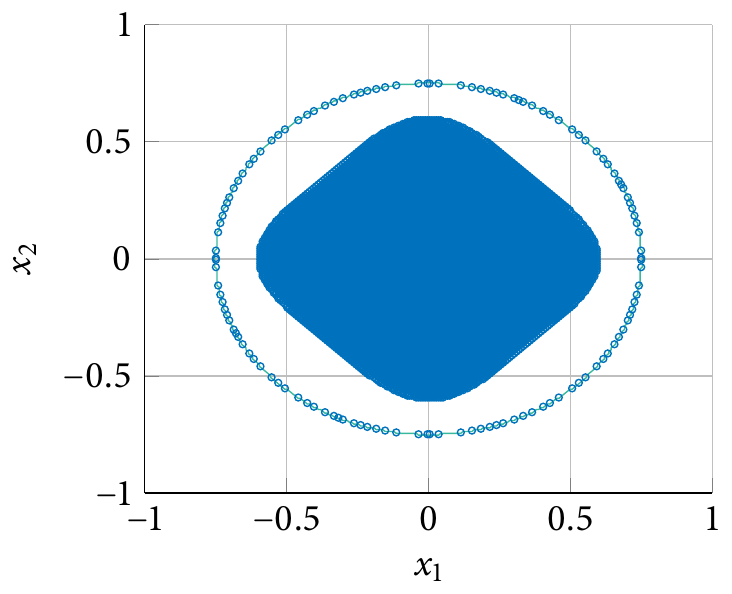} 
        \caption{sparsity pattern \label{fig:quad:sparsity}}%
    \end{subfigure}
    \caption{Second desired state and resulting optimal control and state}
    \label{fig:quad}
\end{figure}
The situation is different if the adjoint state satisfies $\bar p=0$ on an open set, which can happen if the desired state is (locally) attainable. We demonstrate this using 
\begin{equation}
    \label{eq:target2}
    y_d(x_1,x_2) = (1-x_1^2)(1-x_2^2),
\end{equation}
see \cref{fig:target:2}. The control domain is set to
\begin{equation}
    \omc = \set{x\in\Omega:|x|_2\leq \tfrac34},
\end{equation}
while the observation domain is again chosen as $\omo=\Omega$.
The corresponding optimal control $\bar u_h$, control domain, optimal state $\bar y_h$ and locations of non-zero components of $\vec u$ are shown in \cref{fig:quad:control}--\subref*{fig:quad:sparsity}. Here, the control consists of the sum of a line measure concentrated on the boundary $\partial\omc$ of the control domain and a distributed function (of small magnitude compared to the line measures in \subref*{fig:quad:control}) in the interior of $\omc$. (We remark that a similar behavior can be observed in the case of $\M$-norm penalties for attainable targets.)

We close this section with an example of Neumann boundary control, i.e., $\omc = \partial\Omega$, for the operator $A=-\Delta + c_0\,\mathrm{Id}$ with $c_0= 10^{-2}$, desired state \eqref{eq:target1} and $\omo = \Omega$. 
The corresponding optimal control $\bar u_h$ is shown in \cref{fig:neumann}, where we again observe a sparse solution.
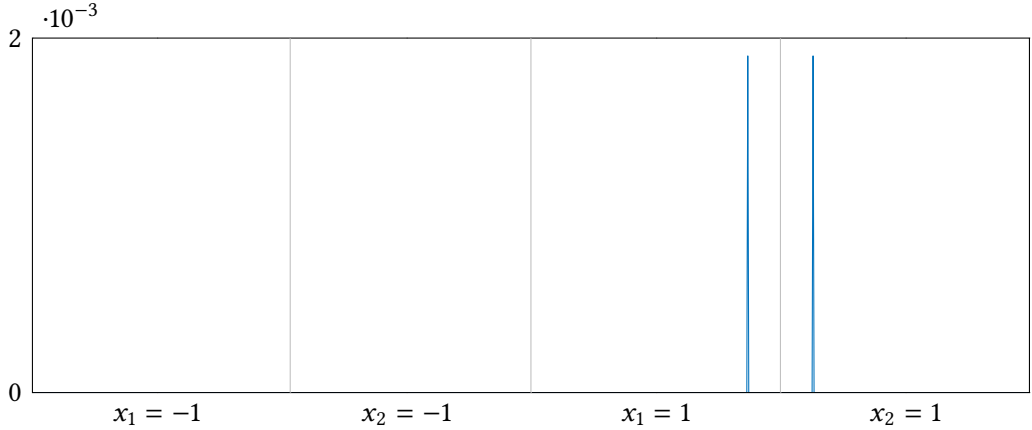
\begin{figure}[t]
    \centering
    \definecolor{mycolor1}{rgb}{0.00000,0.44700,0.74100}%
\begin{tikzpicture}

\begin{axis}[%
width=\textwidth,
height=0.3\textheight,
xmin=1,
xmax=1024,
ymin=0,
ymax=0.002,
ytick={0,.002},
major tick length=0,
xtick={129,385,641,897},xticklabels={$x_1=-1$,$x_2=-1$,$x_1=1$,$x_2=1$},
extra x ticks={265.5,512.5,768.5},extra x tick style={grid=major},extra x tick labels=\empty,
axis on top]
\addplot [color=mycolor1,solid,forget plot]
  table[row sep=crcr]{%
1	0\\
2	0\\
3	0\\
4	0\\
5	0\\
6	0\\
7	0\\
8	0\\
9	0\\
10	0\\
11	0\\
12	0\\
13	0\\
14	0\\
15	0\\
16	0\\
17	0\\
18	0\\
19	0\\
20	0\\
21	0\\
22	0\\
23	0\\
24	0\\
25	0\\
26	0\\
27	0\\
28	0\\
29	0\\
30	0\\
31	0\\
32	0\\
33	0\\
34	0\\
35	0\\
36	0\\
37	0\\
38	0\\
39	0\\
40	0\\
41	0\\
42	0\\
43	0\\
44	0\\
45	0\\
46	0\\
47	0\\
48	0\\
49	0\\
50	0\\
51	0\\
52	0\\
53	0\\
54	0\\
55	0\\
56	0\\
57	0\\
58	0\\
59	0\\
60	0\\
61	0\\
62	0\\
63	0\\
64	0\\
65	0\\
66	0\\
67	0\\
68	0\\
69	0\\
70	0\\
71	0\\
72	0\\
73	0\\
74	0\\
75	0\\
76	0\\
77	0\\
78	0\\
79	0\\
80	0\\
81	0\\
82	0\\
83	0\\
84	0\\
85	0\\
86	0\\
87	0\\
88	0\\
89	0\\
90	0\\
91	0\\
92	0\\
93	0\\
94	0\\
95	0\\
96	0\\
97	0\\
98	0\\
99	0\\
100	0\\
101	0\\
102	0\\
103	0\\
104	0\\
105	0\\
106	0\\
107	0\\
108	0\\
109	0\\
110	0\\
111	0\\
112	0\\
113	0\\
114	0\\
115	0\\
116	0\\
117	0\\
118	0\\
119	0\\
120	0\\
121	0\\
122	0\\
123	0\\
124	0\\
125	0\\
126	0\\
127	0\\
128	0\\
129	0\\
130	0\\
131	0\\
132	0\\
133	0\\
134	0\\
135	0\\
136	0\\
137	0\\
138	0\\
139	0\\
140	0\\
141	0\\
142	0\\
143	0\\
144	0\\
145	0\\
146	0\\
147	0\\
148	0\\
149	0\\
150	0\\
151	0\\
152	0\\
153	0\\
154	0\\
155	0\\
156	0\\
157	0\\
158	0\\
159	0\\
160	0\\
161	0\\
162	0\\
163	0\\
164	0\\
165	0\\
166	0\\
167	0\\
168	0\\
169	0\\
170	0\\
171	0\\
172	0\\
173	0\\
174	0\\
175	0\\
176	0\\
177	0\\
178	0\\
179	0\\
180	0\\
181	0\\
182	0\\
183	0\\
184	0\\
185	0\\
186	0\\
187	0\\
188	0\\
189	0\\
190	0\\
191	0\\
192	0\\
193	0\\
194	0\\
195	0\\
196	0\\
197	0\\
198	0\\
199	0\\
200	0\\
201	0\\
202	0\\
203	0\\
204	0\\
205	0\\
206	0\\
207	0\\
208	0\\
209	0\\
210	0\\
211	0\\
212	0\\
213	0\\
214	0\\
215	0\\
216	0\\
217	0\\
218	0\\
219	0\\
220	0\\
221	0\\
222	0\\
223	0\\
224	0\\
225	0\\
226	0\\
227	0\\
228	0\\
229	0\\
230	0\\
231	0\\
232	0\\
233	0\\
234	0\\
235	0\\
236	0\\
237	0\\
238	0\\
239	0\\
240	0\\
241	0\\
242	0\\
243	0\\
244	0\\
245	0\\
246	0\\
247	0\\
248	0\\
249	0\\
250	0\\
251	0\\
252	0\\
253	0\\
254	0\\
255	0\\
256	0\\
257	0\\
258	0\\
259	0\\
260	0\\
261	0\\
262	0\\
263	0\\
264	0\\
265	0\\
266	0\\
267	0\\
268	0\\
269	0\\
270	0\\
271	0\\
272	0\\
273	0\\
274	0\\
275	0\\
276	0\\
277	0\\
278	0\\
279	0\\
280	0\\
281	0\\
282	0\\
283	0\\
284	0\\
285	0\\
286	0\\
287	0\\
288	0\\
289	0\\
290	0\\
291	0\\
292	0\\
293	0\\
294	0\\
295	0\\
296	0\\
297	0\\
298	0\\
299	0\\
300	0\\
301	0\\
302	0\\
303	0\\
304	0\\
305	0\\
306	0\\
307	0\\
308	0\\
309	0\\
310	0\\
311	0\\
312	0\\
313	0\\
314	0\\
315	0\\
316	0\\
317	0\\
318	0\\
319	0\\
320	0\\
321	0\\
322	0\\
323	0\\
324	0\\
325	0\\
326	0\\
327	0\\
328	0\\
329	0\\
330	0\\
331	0\\
332	0\\
333	0\\
334	0\\
335	0\\
336	0\\
337	0\\
338	0\\
339	0\\
340	0\\
341	0\\
342	0\\
343	0\\
344	0\\
345	0\\
346	0\\
347	0\\
348	0\\
349	0\\
350	0\\
351	0\\
352	0\\
353	0\\
354	0\\
355	0\\
356	0\\
357	0\\
358	0\\
359	0\\
360	0\\
361	0\\
362	0\\
363	0\\
364	0\\
365	0\\
366	0\\
367	0\\
368	0\\
369	0\\
370	0\\
371	0\\
372	0\\
373	0\\
374	0\\
375	0\\
376	0\\
377	0\\
378	0\\
379	0\\
380	0\\
381	0\\
382	0\\
383	0\\
384	0\\
385	0\\
386	0\\
387	0\\
388	0\\
389	0\\
390	0\\
391	0\\
392	0\\
393	0\\
394	0\\
395	0\\
396	0\\
397	0\\
398	0\\
399	0\\
400	0\\
401	0\\
402	0\\
403	0\\
404	0\\
405	0\\
406	0\\
407	0\\
408	0\\
409	0\\
410	0\\
411	0\\
412	0\\
413	0\\
414	0\\
415	0\\
416	0\\
417	0\\
418	0\\
419	0\\
420	0\\
421	0\\
422	0\\
423	0\\
424	0\\
425	0\\
426	0\\
427	0\\
428	0\\
429	0\\
430	0\\
431	0\\
432	0\\
433	0\\
434	0\\
435	0\\
436	0\\
437	0\\
438	0\\
439	0\\
440	0\\
441	0\\
442	0\\
443	0\\
444	0\\
445	0\\
446	0\\
447	0\\
448	0\\
449	0\\
450	0\\
451	0\\
452	0\\
453	0\\
454	0\\
455	0\\
456	0\\
457	0\\
458	0\\
459	0\\
460	0\\
461	0\\
462	0\\
463	0\\
464	0\\
465	0\\
466	0\\
467	0\\
468	0\\
469	0\\
470	0\\
471	0\\
472	0\\
473	0\\
474	0\\
475	0\\
476	0\\
477	0\\
478	0\\
479	0\\
480	0\\
481	0\\
482	0\\
483	0\\
484	0\\
485	0\\
486	0\\
487	0\\
488	0\\
489	0\\
490	0\\
491	0\\
492	0\\
493	0\\
494	0\\
495	0\\
496	0\\
497	0\\
498	0\\
499	0\\
500	0\\
501	0\\
502	0\\
503	0\\
504	0\\
505	0\\
506	0\\
507	0\\
508	0\\
509	0\\
510	0\\
511	0\\
512	0\\
513	0\\
514	0\\
515	0\\
516	0\\
517	0\\
518	0\\
519	0\\
520	0\\
521	0\\
522	0\\
523	0\\
524	0\\
525	0\\
526	0\\
527	0\\
528	0\\
529	0\\
530	0\\
531	0\\
532	0\\
533	0\\
534	0\\
535	0\\
536	0\\
537	0\\
538	0\\
539	0\\
540	0\\
541	0\\
542	0\\
543	0\\
544	0\\
545	0\\
546	0\\
547	0\\
548	0\\
549	0\\
550	0\\
551	0\\
552	0\\
553	0\\
554	0\\
555	0\\
556	0\\
557	0\\
558	0\\
559	0\\
560	0\\
561	0\\
562	0\\
563	0\\
564	0\\
565	0\\
566	0\\
567	0\\
568	0\\
569	0\\
570	0\\
571	0\\
572	0\\
573	0\\
574	0\\
575	0\\
576	0\\
577	0\\
578	0\\
579	0\\
580	0\\
581	0\\
582	0\\
583	0\\
584	0\\
585	0\\
586	0\\
587	0\\
588	0\\
589	0\\
590	0\\
591	0\\
592	0\\
593	0\\
594	0\\
595	0\\
596	0\\
597	0\\
598	0\\
599	0\\
600	0\\
601	0\\
602	0\\
603	0\\
604	0\\
605	0\\
606	0\\
607	0\\
608	0\\
609	0\\
610	0\\
611	0\\
612	0\\
613	0\\
614	0\\
615	0\\
616	0\\
617	0\\
618	0\\
619	0\\
620	0\\
621	0\\
622	0\\
623	0\\
624	0\\
625	0\\
626	0\\
627	0\\
628	0\\
629	0\\
630	0\\
631	0\\
632	0\\
633	0\\
634	0\\
635	0\\
636	0\\
637	0\\
638	0\\
639	0\\
640	0\\
641	0\\
642	0\\
643	0\\
644	0\\
645	0\\
646	0\\
647	0\\
648	0\\
649	0\\
650	0\\
651	0\\
652	0\\
653	0\\
654	0\\
655	0\\
656	0\\
657	0\\
658	0\\
659	0\\
660	0\\
661	0\\
662	0\\
663	0\\
664	0\\
665	0\\
666	0\\
667	0\\
668	0\\
669	0\\
670	0\\
671	0\\
672	0\\
673	0\\
674	0\\
675	0\\
676	0\\
677	0\\
678	0\\
679	0\\
680	0\\
681	0\\
682	0\\
683	0\\
684	0\\
685	0\\
686	0\\
687	0\\
688	0\\
689	0\\
690	0\\
691	0\\
692	0\\
693	0\\
694	0\\
695	0\\
696	0\\
697	0\\
698	0\\
699	0\\
700	0\\
701	0\\
702	0\\
703	0\\
704	0\\
705	0\\
706	0\\
707	0\\
708	0\\
709	0\\
710	0\\
711	0\\
712	0\\
713	0\\
714	0\\
715	0\\
716	0\\
717	0\\
718	0\\
719	0\\
720	0\\
721	0\\
722	0\\
723	0\\
724	0\\
725	0\\
726	0\\
727	0\\
728	0\\
729	0\\
730	0\\
731	0\\
732	0\\
733	0\\
734	0\\
735	0.00190041755229524\\
736	0\\
737	0\\
738	0\\
739	0\\
740	0\\
741	0\\
742	0\\
743	0\\
744	0\\
745	0\\
746	0\\
747	0\\
748	0\\
749	0\\
750	0\\
751	0\\
752	0\\
753	0\\
754	0\\
755	0\\
756	0\\
757	0\\
758	0\\
759	0\\
760	0\\
761	0\\
762	0\\
763	0\\
764	0\\
765	0\\
766	0\\
767	0\\
768	0\\
769	0\\
770	0\\
771	0\\
772	0\\
773	0\\
774	0\\
775	0\\
776	0\\
777	0\\
778	0\\
779	0\\
780	0\\
781	0\\
782	0\\
783	0\\
784	0\\
785	0\\
786	0\\
787	0\\
788	0\\
789	0\\
790	0\\
791	0\\
792	0\\
793	0\\
794	0\\
795	0\\
796	0\\
797	0\\
798	0\\
799	0\\
800	0\\
801	0\\
802	0.00190041755229684\\
803	0\\
804	0\\
805	0\\
806	0\\
807	0\\
808	0\\
809	0\\
810	0\\
811	0\\
812	0\\
813	0\\
814	0\\
815	0\\
816	0\\
817	0\\
818	0\\
819	0\\
820	0\\
821	0\\
822	0\\
823	0\\
824	0\\
825	0\\
826	0\\
827	0\\
828	0\\
829	0\\
830	0\\
831	0\\
832	0\\
833	0\\
834	0\\
835	0\\
836	0\\
837	0\\
838	0\\
839	0\\
840	0\\
841	0\\
842	0\\
843	0\\
844	0\\
845	0\\
846	0\\
847	0\\
848	0\\
849	0\\
850	0\\
851	0\\
852	0\\
853	0\\
854	0\\
855	0\\
856	0\\
857	0\\
858	0\\
859	0\\
860	0\\
861	0\\
862	0\\
863	0\\
864	0\\
865	0\\
866	0\\
867	0\\
868	0\\
869	0\\
870	0\\
871	0\\
872	0\\
873	0\\
874	0\\
875	0\\
876	0\\
877	0\\
878	0\\
879	0\\
880	0\\
881	0\\
882	0\\
883	0\\
884	0\\
885	0\\
886	0\\
887	0\\
888	0\\
889	0\\
890	0\\
891	0\\
892	0\\
893	0\\
894	0\\
895	0\\
896	0\\
897	0\\
898	0\\
899	0\\
900	0\\
901	0\\
902	0\\
903	0\\
904	0\\
905	0\\
906	0\\
907	0\\
908	0\\
909	0\\
910	0\\
911	0\\
912	0\\
913	0\\
914	0\\
915	0\\
916	0\\
917	0\\
918	0\\
919	0\\
920	0\\
921	0\\
922	0\\
923	0\\
924	0\\
925	0\\
926	0\\
927	0\\
928	0\\
929	0\\
930	0\\
931	0\\
932	0\\
933	0\\
934	0\\
935	0\\
936	0\\
937	0\\
938	0\\
939	0\\
940	0\\
941	0\\
942	0\\
943	0\\
944	0\\
945	0\\
946	0\\
947	0\\
948	0\\
949	0\\
950	0\\
951	0\\
952	0\\
953	0\\
954	0\\
955	0\\
956	0\\
957	0\\
958	0\\
959	0\\
960	0\\
961	0\\
962	0\\
963	0\\
964	0\\
965	0\\
966	0\\
967	0\\
968	0\\
969	0\\
970	0\\
971	0\\
972	0\\
973	0\\
974	0\\
975	0\\
976	0\\
977	0\\
978	0\\
979	0\\
980	0\\
981	0\\
982	0\\
983	0\\
984	0\\
985	0\\
986	0\\
987	0\\
988	0\\
989	0\\
990	0\\
991	0\\
992	0\\
993	0\\
994	0\\
995	0\\
996	0\\
997	0\\
998	0\\
999	0\\
1000	0\\
1001	0\\
1002	0\\
1003	0\\
1004	0\\
1005	0\\
1006	0\\
1007	0\\
1008	0\\
1009	0\\
1010	0\\
1011	0\\
1012	0\\
1013	0\\
1014	0\\
1015	0\\
1016	0\\
1017	0\\
1018	0\\
1019	0\\
1020	0\\
1021	0\\
1022	0\\
1023	0\\
1024	0\\
};
\end{axis}
\end{tikzpicture}%
    \caption{Optimal Neumann boundary control $\bar u_h$ for desired state in \cref{fig:target:1}}
    \label{fig:neumann}
\end{figure}

\section{Conclusion}

Optimal control problems with non-negativity constraints are coercive even without control costs, albeit only in the space of Radon measures. Existence of a strictly positive solution of the pre-adjoint equation verifies a pre-dual Slater condition, which yields existence of and optimality conditions for a minimizing control.
These results confirm the previously only numerically observed stability of the non-negative sparse control problem in \cite{Clason:2011a} as $\alpha\to 0$.
This approach is also applicable if the control-to-observation mapping is not continuous, using Fenchel duality for an unbounded operator. A conforming discretization of the space of Radon measures yields a discrete measure-space control problem that is amenable to the efficient numerical solution by a semismooth Newton method. The numerical examples demonstrate that optimal measure-space controls have an inherent sparsity property which does not require the presence of sparsity-promoting penalties. Rather, the measure-space setting allows the minimizing sequence to concentrate on lower-dimensional manifolds, which is prevented by $L^p$ control costs enforcing higher regularity. (Of course, an additional sparsity penalty can lead to even smaller support of the optimal control.)
This is another illustration of the fact that optimization problems in function spaces have a much more delicate structure than their finite-dimensional counterparts due to the richer topological properties of infinite-dimensional spaces.

This work can be extended in several directions. Although outside the scope of the current paper, an analysis of the conforming finite element discretization --  including convergence rates -- along the lines of \cite{Clason:2012} is certainly possible. One could also apply techniques developed for the Moreau--Yosida regularization of state constraints to obtain convergence rates for the regularization in \cref{sec:MY}. Finally, it would be worthwhile to investigate whether well-posedness in weak spaces can also hold for nonlinear or time-dependent problems without control costs.

\appendix

\section{Density in the cone of positive measures}\label{sec:appendix}

In this appendix we will prove \cref{thm:wsd}, needed in \cref{sec:existence:minimizer} and \cref{sec:MY}. 
We will need some notation. 
For a normed space $X$, let $B_X$ its closed unit ball, and for $S \subset X$, define its polar set
\begin{equation}
    S^\circ := \set{ x^* \in X^* : \langle x^*,x\rangle_{X^*,X} \le 1 \;
    \text{ for all } x\in S}.
\end{equation}
By switching the roles of $X$ and $X^*$ one defines the polar of a subset of $X^*$. 
Basic results on polar sets can be found in, e.g., \cite[\S{}\,IV.1]{Schaefer}.
We also need the following density result.

\begin{lemma}\label{lem:polar}
    Let $X$ be a separable Banach space, $U$ a linear subspace of $X^*$, and $S\subset X$.
    Assume that
    \begin{equation}
        \overline{co}\, (B_X \cup S) \supset (B_U \cap S^\circ)^\circ.
    \end{equation}
    Then $B_U \cap S^\circ$ is weakly-$*$ sequentially dense in $B_{X^*}\cap S^\circ$, i.e., for all $f\in X^*$ there exists a sequence $\{f_n\}_{n\in\N}\subset (B_U\cap S^\circ)$ such that $\langle f_n,x \rangle_{X^*,X}
    \to \langle f,x \rangle_{X^*,X}$ for all $x\in X$.
\end{lemma}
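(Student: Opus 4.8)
The plan is to reduce the claim to the bipolar theorem together with metrizability of the weak-$*$ topology on bounded sets. Throughout, write $A := B_U \cap S^\circ$ and $B := B_{X^*}\cap S^\circ$. Both are convex and contain $0$: indeed $B_U$, as the unit ball of the subspace $U$, is convex and contains $0$, while $S^\circ=\bigcap_{x\in S}\set{x^*:\dual{x^*}{x}\le 1}$ is an intersection of weak-$*$ closed half-spaces each containing $0$. Both sets lie in $B_{X^*}$ and are thus bounded, and $A\subset B$. The strategy is to show $\overline{A}^{\,w*}=B$, where $\overline{A}^{\,w*}$ denotes the weak-$*$ closure in $X^*$, and then to upgrade this to sequential density.

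First I would handle the passage from topological to sequential density. Because $X$ is separable, the weak-$*$ topology restricted to the bounded set $B_{X^*}$ is metrizable; hence on $B_{X^*}$ weak-$*$ density and weak-$*$ \emph{sequential} density coincide. Consequently, once $\overline{A}^{\,w*}=B$ is established, each $f\in B$ is a weak-$*$ limit of a sequence drawn from $A=B_U\cap S^\circ$, which is exactly the assertion. I note that $B$ is itself weak-$*$ closed, being the intersection of the weak-$*$ compact ball $B_{X^*}$ (Banach--Alaoglu) with the weak-$*$ closed set $S^\circ$; therefore $\overline{A}^{\,w*}\subset B$ is automatic, and the content lies in the reverse inclusion.

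The core step is $B\subset\overline{A}^{\,w*}$, which I would obtain via polarity. For the pairing $\dual{\cdot}{\cdot}_{X^*,X}$, let $A^\circ\subset X$ be the polar of $A$ and $A^{\circ\circ}\subset X^*$ its bipolar. Since $A$ is convex and $0\in A$, the bipolar theorem for the topology $\sigma(X^*,X)$ gives $A^{\circ\circ}=\overline{A}^{\,w*}$; see, e.g., \cite[\S{}\,IV.1]{Schaefer}. It then remains to prove $B\subset A^{\circ\circ}$, i.e.\ that $\dual{f}{x}\le 1$ for every $f\in B$ and every $x\in A^\circ$. Here the hypothesis enters: by assumption $A^\circ=(B_U\cap S^\circ)^\circ\subset \overline{co}\,(B_X\cup S)$, so it suffices to check $\dual{f}{x}\le 1$ for $x\in \overline{co}\,(B_X\cup S)$. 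But $f\in B$ means $\norm{f}_{X^*}\le 1$ and $f\in S^\circ$, so $\dual{f}{x}\le 1$ on $B_X$ and on $S$; since $x\mapsto\dual{f}{x}$ is linear and norm-continuous, the closed convex set $\set{x:\dual{f}{x}\le 1}$ contains $B_X\cup S$ and hence all of $\overline{co}\,(B_X\cup S)\supset A^\circ$. This yields $f\in A^{\circ\circ}$, so $B\subset A^{\circ\circ}=\overline{A}^{\,w*}\subset B$ and equality holds.

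I expect the main difficulty to be bookkeeping rather than conceptual: one must track carefully in which space each polar is taken (the polar of $A\subset X^*$ lives in $X$, its bipolar back in $X^*$) and verify the precise hypotheses of the bipolar theorem — convexity of $A$ and $0\in A$ — so that $\overline{co}\,(A\cup\set{0})$ collapses to $\overline{A}^{\,w*}$. The only genuine input beyond the stated assumption is the metrizability of the weak-$*$ topology on $B_{X^*}$, which is precisely where separability of $X$ is used; the remainder is elementary polar calculus.
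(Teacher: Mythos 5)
Your proof is correct and follows essentially the same route as the paper: both arguments rest on the polar calculus identity $B_{X^*}\cap S^\circ=(B_X\cup S)^\circ=(\overline{co}\,(B_X\cup S))^\circ$, the hypothesis combined with the antitonicity of polars and the bipolar theorem to conclude $B_{X^*}\cap S^\circ\subset\overline{B_U\cap S^\circ}^{\,\sigma(X^*,X)}$, and metrizability of $B_{X^*}$ in the weak-$*$ topology (from separability of $X$) to upgrade density to sequential density. The only cosmetic difference is that you verify the bipolar membership element-wise by testing $f$ against $x\in(B_U\cap S^\circ)^\circ$, whereas the paper applies the polar operation directly to the chain of set inclusions; the content is identical.
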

\begin{proof}
    We compute
    \begin{equation}
        \begin{aligned}
            B_{X^*} \cap S^\circ &= B_{X}^\circ \cap S^\circ = (B_X \cup S)^\circ
            =(\overline{co}\, (B_X \cup S))^\circ\\ &\subset (B_U \cap
            S^\circ)^{\circ\circ} = \overline{co}^{\sigma(X^*,X)}
            (B_U \cap S^\circ \cup \{0\}) = \overline{B_U \cap
            S^\circ}^{\sigma(X^*,X)}.
        \end{aligned}
    \end{equation}
    Here we used the following rules of polar calculus: $B_{X^*}=B_X^\circ$, $M^\circ \cap N^\circ = (M\cup N)^\circ$, $M^\circ = (\overline{co}\, M)^\circ$, $M\supset N \Rightarrow M^\circ \subset N^\circ$, and the bi-polar theorem $M^{\circ\circ}=\overline{co}\,(M\cup \{0\})$. 
    The last equality follows from $0 \in B_U \cap S^\circ$ and from convexity of $B_U \cap S^\circ$.  
    Now equality follows from $B_U\subset B_{X^*}$ and the fact that $B_{X^*} \cap S^\circ$ is closed in the $\sigma(X^*,X)$ topology since polar sets are always weakly-$*$ closed.

    Hence, the bounded set $B_U \cap S^\circ$ is weakly-$*$ dense in the bounded set $B_{X^*} \cap S^\circ$. 
    By separability of $X$, this implies weak-$*$ sequential density because $B_{X^*}$ is metrizable; see, e.g., \cite[Corollary~3.30]{Brezis:2010a}.
\end{proof}

\begin{lemma}\label{lem:cone}
    If $S \subset X$ is a cone, then
    \begin{equation}
        S^\circ =  \set{ x^* \in X^* : \langle x^*,x\rangle_{X^*,X} \le 0 \;
        \text{ for all } x\in S}
    \end{equation}
    and
    \begin{equation}
        \overline{co} \,(B_X \cup S) \supset B_X+S:= \set{ \phi\in X : \phi=\phi_1+\phi_2 : \phi_1\in B_X, \phi_2 \in S}.
    \end{equation}
    In particular, the conclusions of \cref{lem:polar} hold if $B_X + S \supset (B_U \cap S^\circ)^\circ$.
\end{lemma}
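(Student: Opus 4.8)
The plan is to establish the two displayed claims separately and then obtain the final ``in particular'' assertion by chaining the second claim with \cref{lem:polar}.

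First I would treat the description of the polar of a cone. One containment is essentially a homogeneity argument: given $x^* \in S^\circ$ and any $x \in S$, the cone property gives $tx \in S$ for all $t > 0$, so $t\,\langle x^*, x\rangle_{X^*,X} = \langle x^*, tx\rangle_{X^*,X} \le 1$ for every $t > 0$, and letting $t \to \infty$ forces $\langle x^*, x\rangle_{X^*,X} \le 0$. The reverse containment is trivial, since $\langle x^*, x\rangle_{X^*,X} \le 0$ on $S$ certainly implies $\langle x^*, x\rangle_{X^*,X} \le 1$ on $S$. This settles the first claim.

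Next I would prove the inclusion $\overline{co}\,(B_X \cup S) \supset B_X + S$ by a scaling trick. Given $\phi = \phi_1 + \phi_2$ with $\phi_1 \in B_X$ and $\phi_2 \in S$, for each $\lambda \in (0,1)$ the rescaled point $\tfrac{1}{1-\lambda}\phi_2$ still lies in $S$ because $S$ is a cone, so the convex combination $\lambda\phi_1 + (1-\lambda)\,\tfrac{1}{1-\lambda}\phi_2 = \lambda\phi_1 + \phi_2$ belongs to $co\,(B_X \cup S)$. Sending $\lambda \to 1^-$ then exhibits $\phi_1 + \phi_2$ as a limit of such convex combinations, placing it in $\overline{co}\,(B_X \cup S)$; the degenerate case $\phi_2 = 0$ is immediate since then $\phi = \phi_1 \in B_X$.

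Finally, the ``in particular'' follows by transitivity of inclusions: under the hypothesis $B_X + S \supset (B_U \cap S^\circ)^\circ$, the inclusion just proved yields $\overline{co}\,(B_X \cup S) \supset (B_U \cap S^\circ)^\circ$, which is exactly the assumption needed to invoke \cref{lem:polar}, whose conclusions therefore hold. I expect no genuine obstacle in this proof; the only step warranting attention is the scaling construction in the second claim, where the point is precisely that the cone property lets one cancel the convex weight $(1-\lambda)$ so that $\phi_2$ itself, rather than a shrunken multiple, survives in the limit.
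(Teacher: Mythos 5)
Your proposal is correct and follows essentially the same route as the paper: the polar identity via scaling an element of the cone to infinity (the paper phrases this contrapositively, producing $\alpha x$ with $\langle \alpha x, x^*\rangle > 1$), and the inclusion $\overline{co}\,(B_X\cup S)\supset B_X+S$ via the convex combination $\lambda\phi_1+(1-\lambda)\tfrac{1}{1-\lambda}\phi_2$ with $\lambda\to 1^-$, which is the same cancellation trick as the paper's $\rho\phi_1+(1-\rho)(\rho/(1-\rho))\phi_2$ up to which factor absorbs the weight.
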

\begin{proof}
    For the first assertion, we note that if $\langle x^*,x\rangle_{X^*,X}>0$, then there exists an $\alpha > 0$ such 
    that $\langle \alpha x,x^*\rangle_{X,X^*}>1$.
    For the second assertion, observe that for $\rho\in[0,1)$ we have that
    \begin{equation}
        \rho (\phi_1+\phi_2) = \rho \phi_1+(1-\rho) (\rho/(1-\rho) \phi_2) \in co (B_X \cup S).
    \end{equation}
    Hence, letting $\rho \to 1$, we obtain  $ \phi_1+\phi_2 \in \overline{co}\, (B_X \cup S)$.
\end{proof}

\begin{theorem}\label{thm:wsd}
    Let $ Q $ be a compact subset of $\R^d$, equipped with a positive measure $\nu$ such that $\nu(\omega) > 0$ for each relatively open, non-empty subset $\omega\subset Q$.
    \begin{itemize}
        \item[(i)] Let $\mu \in \M( Q )^+$ be a positive measure. Then there exists a sequence of positive functions $0 \le f_n$ in $L^\infty( Q )$ with $f_n\rightharpoonup^\ast\mu$  
            and  $\|f_n\|_{L^1( Q )}\le \|\mu\|_{\mathcal M( Q )}$.
        \item[(ii)] Let $\mu \in \M( Q )$ be a signed measure. Then there exists a sequence $f_n$ in $L^\infty( Q )$ with $f_n\rightharpoonup^\ast\mu$  and  $\|f_n\|_{L^1( Q )}\le \|\mu\|_{\mathcal M( Q )}$.
    \end{itemize}
\end{theorem}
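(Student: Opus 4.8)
The plan is to apply \cref{lem:polar}, in the form furnished by \cref{lem:cone}, to the concrete spaces at hand. I would set $X = C(Q)$, which is separable because $Q$ is a compact metric space, so that $X^* = \M(Q)$. As the subspace $U \subset X^*$ I would take the image of $L^\infty(Q)$ under the embedding $f \mapsto f\,d\nu$; since $\nu$ is a positive measure, $\norm{f\,d\nu}_{\M(Q)} = \norm{f}_{L^1(Q)}$, so $B_U = \set{f \in L^\infty(Q) : \norm{f}_{L^1(Q)} \le 1}$, and $f\,d\nu$ lies in the positive cone precisely when $f \ge 0$ $\nu$-a.e. For the cone $S$ I would use the non-positive continuous functions $S = \set{\phi \in C(Q) : \phi \le 0}$; then \cref{lem:cone} gives $S^\circ = \M(Q)^+$, so $B_U \cap S^\circ$ is exactly the set of non-negative $L^\infty$-densities with $L^1$-norm at most $1$, while $B_{X^*} \cap S^\circ$ is the set of positive measures of total variation at most $1$.

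The crux is to verify the hypothesis $(B_U \cap S^\circ)^\circ \subset B_X + S$ of \cref{lem:cone}. First I would compute the polar: for $\phi \in C(Q)$ one has $\sup\set{\int_Q \phi f\,d\nu : f \ge 0,\ \int_Q f\,d\nu \le 1} = \max\bigl(0, \operatorname{ess\,sup}_\nu \phi\bigr)$. This is exactly where the standing assumption on $\nu$ enters: because $\nu$ assigns positive mass to every non-empty relatively open subset of $Q$ and $\phi$ is continuous, the essential supremum coincides with the genuine maximum $\max_Q \phi$. Hence $(B_U \cap S^\circ)^\circ = \set{\phi \in C(Q) : \phi \le 1 \text{ on } Q}$.

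Next I would show this set lies in $B_X + S$. Given $g \in C(Q)$ with $g \le 1$, the splitting $g = \max(g,-1) + \bigl(g - \max(g,-1)\bigr)$ works: the first summand lies in $B_X$ since $-1 \le \max(g,-1) \le 1$ (using $g \le 1$), and the second equals $\min(0, g+1) \le 0$, hence lies in $S$; both are continuous. This establishes the inclusion, so \cref{lem:cone} and \cref{lem:polar} apply and yield that $B_U \cap S^\circ$ is weak-$*$ sequentially dense in $B_{X^*} \cap S^\circ$. Unwinding the identifications, every positive measure of total variation at most $1$ is the weak-$*$ limit of a sequence of non-negative $L^\infty$-functions with $L^1$-norm at most $1$; a scaling argument (handling $\mu = 0$ trivially) then removes the normalization and proves (i).

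Finally, for the signed case (ii) I would reduce to (i) via the Jordan decomposition $\mu = \mu^+ - \mu^-$. Applying (i) to $\mu^+$ and $\mu^-$ produces non-negative sequences $f_n^+ \rightharpoonup^* \mu^+$ and $f_n^- \rightharpoonup^* \mu^-$ with $\norm{f_n^\pm}_{L^1(Q)} \le \norm{\mu^\pm}_{\M(Q)}$. Then $f_n := f_n^+ - f_n^-$ satisfies $f_n \rightharpoonup^* \mu$, and since $\mu^+$ and $\mu^-$ are mutually singular, $\norm{f_n}_{L^1(Q)} \le \norm{f_n^+}_{L^1(Q)} + \norm{f_n^-}_{L^1(Q)} \le \norm{\mu^+}_{\M(Q)} + \norm{\mu^-}_{\M(Q)} = \norm{\mu}_{\M(Q)}$. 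I expect the main obstacle to be the polar computation together with the inclusion $(B_U \cap S^\circ)^\circ \subset B_X + S$, since this is where the hypothesis on $\nu$ is genuinely used and where one must carefully track the $L^1$-versus-$L^\infty$ and density-versus-measure identifications.
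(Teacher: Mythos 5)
Your proposal is correct and follows essentially the same route as the paper's proof: the same choices $X=C(Q)$, $U=L^\infty(Q)$, $S=\set{\phi\le 0}$, verification of the hypothesis of \cref{lem:cone} using the positivity of $\nu$ on relatively open sets, and reduction of (ii) to (i) via the Jordan decomposition. The only (cosmetic) difference is that you compute the polar $(B_U\cap S^\circ)^\circ=\set{\phi\le 1}$ directly and exhibit an explicit splitting into $B_X+S$, whereas the paper argues the same inclusion contrapositively with the scaled characteristic function $\nu(\omega)^{-1}\chi_\omega$.
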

\begin{proof}
    We first consider assertion \textit{(i)} and note that
    for $S=\set{\phi\in C( Q ) : \phi\le 0}$, we have that
    \begin{equation}
        S^\circ=\set{\mu \in \mathcal M( Q ) : \dual{\mu}{\phi}_{\M(Q),C(Q)} \le 0  \;\text{ for all } \phi\in S}
        = \M( Q )^+.
    \end{equation}
    This follows by the fact that $S$ is a cone and from the Riesz representation theorem for positive linear functionals. 
    Let further $X=C( Q )$ and $U=L^\infty( Q )$, which can be interpreted as a subspace of $X^*=\mathcal M( Q )$.
    Application of \cref{lem:polar} will then yield our result (possibly after scaling of $\mu$ to $\|\mu\|_{\M(Q)}=1$).
    Thus, we have to check the condition $\overline{co}\, (B_X \cup S) \supset (B_U \cap S^\circ)^\circ$.

    By \cref{lem:cone}, we merely have to show $B_X+S \supset (B_U \cap S^\circ)^\circ$.
    Let $\phi\in C( Q )\setminus(B_X+S)$. 
    Since $\max\{\phi,0\}$ and $\min\{\phi,0\}$ are also continuous functions, this implies that there exists $x\in  Q $ such that $\phi(x) > 1$. 
    We now show that $\phi \notin (B_U \cap S^\circ)^\circ$, i.e., there exists $f \in B_U \cap S^\circ$ such that $\dual{f}{\phi}_{\M(Q),C(Q)} > 1$.
    Let $\alpha := \phi(x)-1>0$. 
    Then, by continuity of $\phi$, there exists an open neighborhood $\omega$ of $x$ such that $\phi|_{\omega} \ge 1+\alpha/2$. By assumption, $\nu(\omega)\not=0$.
    Set $f := \nu(\omega)^{-1}\chi_{\omega}$ (i.e., a scaled characteristic function), which yields $\|f\|_{L^1( Q )} = 1$ and $f \ge 0$.
    Thus, $f\in B_U \cap S^\circ$, and
    \begin{equation}
        \dual{f}{\phi}_{\M(Q),C(Q)} = \nu(\omega)^{-1}\int_\omega \phi\, d\nu \ge 1+\alpha/2>1.
    \end{equation}
    This shows that $\phi \not \in (B_U \cap S^\circ)^\circ$, which allows application of \cref{lem:polar}. 

    Assertion \textit{(ii)} now follows from assertion \textit{(i)} by splitting $\mu$ into a positive and negative part and approximating these separately via \textit{(i)} by positive and negative functions, respectively. 
\end{proof}

\section*{Acknowledgments}

This work was supported in part by the Austrian Science Fund (FWF) under grant SFB {F}32 (SFB ``Mathematical Optimization and Applications in Biomedical Sciences'').
Part of the work was carried out while the second author was interim professor at the University of Hamburg. 

\printbibliography
\end{document}